\newtheorem{thm}{Theorem}[section]
\newtheorem{defi}{Definition}[section]
\newtheorem{hyp}{Assumption}[section]
\newtheorem{cor}{Corollary}[section]
\newtheorem{lem}{Lemma}[section]
\newenvironment{remark}[1][Remark]{\begin{trivlist}
\item[\hskip \labelsep {\bfseries #1}]}{\end{trivlist}}
\newcommand{\ind}{1\!\!1}
\title{Parametric estimation for Gaussian fields indexed by graphs}
\author{T. Espinasse, F. Gamboa and  J-M. Loubes}
\begin{document}
\maketitle \noindent
\begin{abstract}
In this paper, using spectral theory of Hilbertian operators, we study $ARMA$ Gaussian processes indexed by graphs. We extend Whittle maximum likelihood estimation of the parameters for the corresponding \textit{spectral density} and show their asymptotic optimality.
\end{abstract}

% \end{frontmatter}

\tableofcontents

% FIXME : dessin (regular pattern), ordre, introduction

\newpage

\section*{\\Introduction}
In the past few years, much interest has been paid to the study of random fields over graphs. It has been driven by the growing needs for both theoretical and practical results for data indexed by graphs. On the one hand, the definition of graphical models by J.N. Darroch, S.L. Lauritzen and T.P. Speed in $1980$ \cite{graph_mod} fostered new interest in Markov fields, and many tools have been developed in this direction (see, for instance~\cite{verzelen1} and~\cite{verzelen2}). On the another hand, the industrial demand linked to graphical problems has risen with the apparition of new technologies. In very particular, the Internet and social networks provide a huge field of applications, but  biology, economy, geography or image analysis also benefit from models taking into account a graph structure.\\
\indent The analysis of road traffic is at the root of this work. Actually, prediction of road traffic deals with the forecast of speed of vehicles which may be seen as a spatial random field over the traffic network. Some work has been done without taking into account the particular graph structure of the speed process (see for example \cite{MR2369028} and \cite{MR2328555} for related statistical issues). In this paper, we build a new model for Gaussian random fields over graphs and study statistical properties of such stochastic processes.\vskip .1in
A random field over a graph is a spatial process indexed by the vertices of a graph, namely $(X_i)_{i \in G}$, where $G$ is a given graph. Many models already exist in the probabilistic literature, ranging from Markov fields to autoregressive processes, which are based on two general kinds of construction. On the one hand, graphical models are defined as Markov fields (see for instance~\cite{guyonbook}), with a particular dependency structure. Actually, they are built by specifying a dependency structure for  $X_i$ and $X_j$, conditionally 
to the other variables, as soon as the locations $i \in G$ and $j \in G$ are connected. For graphical models, we refer for instance to~\cite{graph_mod} and references therein. On the other hand, the graph itself, through the adjacency operator, can provide the dependency. This is the case, for example, of autoregressive models on $\mathbb{Z}^d$ (see~\cite{guyonbook}). Here, the local form of the graph is strongly used for statistical inference. \vskip .1in
More precisely, the usual purpose of graphical models is to design an underlying graph which reflects the dependency of the data. This method has to be applied when this graph is not easily known (for instance social networks) or when it plays the role of a model which helps understanding the correlations between high complex data (for instance for biological purpose). Our approach differs since, in our case, the graph is known, and we aim at using a model with \textit{stationary} properties. Indeed, in the case of road traffic, we can consider that the correlations of the process depend mainly on the local structure of the network. This assumption is commonly accepted among professionals of road trafficking speaking of capacity of the road.
% Thus, $ARMA$ models will be better in this framework. 

\vskip .1in
In this paper, 
% we will provide a new framework that takes into account advantages of both points of views. Hence,
 we  extend some classical results from time series to spatial fields over general graphs and provide a new definition for regular $ARMA$ processes on graphs. 
For this, we will make use of spectral analysis and extend to our framework some classical results of time series.
In particular, the notion of spectral density may be extended to graphs. This will enable us to construct a maximum likelihood estimate for parametric models of spectral densities. This also leads to an extension of the Whittle's approximation (see \cite{sego}, \cite{AzDa}). Actually, many extensions of this approximation have been performed, even in non-stationary cases (see \cite{dahlhaus2}, \cite{robinson1}, \cite{robinson2}). The extension studied here concerns general $ARMA$ processes over graphs. We point out that we will compare throughout all the paper our new framework with the case $G= \mathbb{Z}^d, d \geq 1$.\vskip .1in
% The paper falls into the following parts. 
Section \ref{s:rappels} is devoted to some definitions of graphs and spectral theory for time series. Then we state the definition of general $ARMA$ processes over a graph in Section \ref{s:analytic_construction}. The convergence of the Whittle maximum likelihood estimate and its asymptotic efficiency are given in Theorems \ref{thm_conv} and \ref{t:as_norm} in Section \ref{s:main_thm}. 
% Section \ref{s:algebraical_construction}
% % % provides some way to modify the graph itself, in order to consider a larger class of processes, which leads to  the construction of a large class of Markov fields. 
% Section \ref{s:spectral} gives the definition of the spectral measure for the graph, and discuss sufficient conditions for its existence.
Section \ref{s:discussion} is devoted to a short discussion on potential applications and perspectives.
Some simulations are provided in Section \ref{s:simul}. The last section provides all necessary tools to prove the main theorems, in particular Szegö's Lemmas for graphs are given in Section \ref{s:szego}, while the proofs of the technical Lemmas are postponed in Section~\ref{s:technical_lemmas}.

\section{Definitions and useful properties for spectral analysis and Toeplitz operators}\label{s:rappels}

%Gaussian field, 
%Graphical models
%Gibbs fields
\subsection{Graphs, adjacency operator, and spectral representation}

In the whole paper, we will consider a Gaussian spatial process $(X_i)_{i \in G}$ indexed by the vertices of an infinite undirected weighted graph.

We will call $\mathbf{G}=(G,W)$ this graph, where
\begin{itemize}
 \item $G$ is the set of vertices. $\mathbf{G}$ is said to be infinite as soon as $G$ is infinite (but countable). 
\item $W \in [-1,1]^{G\times G}$ is the symmetric weighted adjacency operator.
 That is, $ |W_{ij}|\neq 0$ when $i\in G$ and $j \in G$
are connected.   
\end{itemize}

We assume that $W$ is symmetric ($W_{ij}=W_{ji},\; i,j\in G$) since we deal only with undirected graphs.

For any vertex $i \in G$, a vertex $j \in G$ is said to be a neighbor of $i$ if, and only if, $W_{ij} \neq 0$. The degree $\operatorname{deg}(i)$ of $i$ is the number of neighbors of the vertex $i$,
and the degree of the graph $\mathbf{G}$ is defined as the maximum degree of the vertices of the graph $\mathbf{G}$ :
$$\operatorname{deg}(\mathbf{G}) := \max_{i \in G} \operatorname{deg}(i).$$ 
From now on, we assume that the degree of the graph $\mathbf{G}$ is bounded :
$$\operatorname{deg}(\mathbf{G})< + \infty.$$

Assume now that $W$ is renormalized : its entries belong to $[-\frac{1}{\operatorname{deg}(\mathbf{G})},\frac{1}{\operatorname{deg}(\mathbf{G})}]$. This is not restrictive since re-normalizing the adjacency operator does not change the 
objects introduced later.
In particular, the spectral representation of Hilbertian operator is not sensitive to a renormalization.

Notice that in the classical case $G=\mathbb{Z}$, the renormalized 
adjacency operator is
\begin{equation}
 \label{e:defz}
W^{(\mathbb{Z})}_{ij}=\frac{1}{2}\ind_{\{|i-j|=1\}}, (i,j\in\mathbb{Z}).
\end{equation}

Here, $\operatorname{deg}(\mathbb{Z}) = 2$.
This case will be used in all the paper as an illustration example.

To introduce the spectral decomposition, consider the action of the adjacency operator on $l^2(G)$ as 
$$\forall u \in l^2(G), (Wu)_i := \sum_{j \in G} W_{ij} u_j, (i\in G).$$

We denote by $B_G$ the set of all bounded Hilbertian
 operators on $l^2(G)$ (the set of square sommable real sequences indexed by $G$). 
The operator space $B_G$ will be endowed with the classical operator norm 
$$\forall A \in B_G, \left\| A \right\|_{2,op}: = \sup_{u \in l^2(G), \left\| u\right\|_2 \leq 1} \left\| Au \right\|_2 , $$
where $\left\| . \right\|_2$ stands for the usual norm on $l^2(G)$.

Notice that, as the degree of $\mathbf{G}$ and the entries of $W$ are both bounded, $W$ lies in $B_{G}$, and we have $$\left\| W \right\|_{2,op} \leq 1 .$$  
% $B_G$ is endowed with the classical operator norm 
% $$\forall A \in B_G, \left\| A \right\|_{2,op}: = \sup_{u \in l^2(G), \left\| u\right\|_2} \left\| Au \right\|_2, $$
% where $\left\| . \right\|_2$ stands for the usual norm on $l^2(G)$.

Recall that for any bounded Hilbertian operator $A \in B_G$, the spectrum $\operatorname{Sp}(A)$ is defined as the set of all complex numbers $\lambda$ such that 
$\lambda \operatorname{Id}- A$ is not invertible (here $\operatorname{Id}$ stands for the identity on $l^2(G)$). Since $W$ is bounded and symmetric,
 $\operatorname{Sp}(W)$ is a non-empty compact subset of $\mathbb{R}$ \cite{rudin}.

We aim now at providing a spectral representation of any bounded normal Hilbertian operator. For this, first recall the definition of a
 resolution of identity (see for example \cite{rudin}):

\begin{defi}
 Let $\mathcal{M}$ be a $\sigma$-algebra over a set $\Omega$.
%, and $H$ an Hilbert Space.
 We call identity resolution (on $\mathcal{M}$) a map
$$E : \mathcal{M} \rightarrow B_G $$
such that, 
\begin{enumerate}
 \item $E(\emptyset{}) = 0, E(\Omega)= I$.
\item For any $\omega \in \mathcal{M}$, the operator $E(\omega)$ is a projection operator.
\item For any $\omega,\omega' \in \mathcal{M}$, we have
 $$E(\omega \cap \omega') =E(\omega)E(\omega')=E(\omega')E(\omega).$$
\item For any $\omega,\omega' \in \mathcal{M}$ such that $\omega \cap \omega' = \emptyset$, we have 
$$E(\omega \cup \omega') = E(\omega)+E(\omega').$$
\end{enumerate}
% Therefore, for all $x \in l^2(G)$ and $y \in l^2(G)$, the functional $E_{x,y}$ defined by
% $$E_{xy}(\omega) = \langle E(\omega)x,y\rangle_{l^2(G)}$$
% is a complex measure on $\mathcal{M}$.
%   \end{enumerate}
\end{defi}

We can now recall the fundamental decomposition theorem (see for example \cite{rudin})
\begin{thm}[Spectral decomposition]
\label{thm_spectral_decomposition}
If $A \in B_G$ is symmetric, then there exists a unique identity resolution $E$ over all Borelian subsets of $\operatorname{Sp}(A)$, such that
$$A = \int_{\operatorname{Sp}(A)} \lambda \mathrm{d} E (\lambda) .$$
% Moreover, for any  $U \in B_S$ such that $UA = AU$, every projector $E(\omega), \omega \subset \operatorname{Sp}(A)$ commutes with $U$
\end{thm}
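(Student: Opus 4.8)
The plan is to follow the classical route to the spectral theorem for a bounded symmetric operator, namely to first build a continuous functional calculus on $C(\operatorname{Sp}(A))$, then extend it to bounded Borel functions, and finally read off the identity resolution $E$ as the image of indicator functions; the genuinely operator-theoretic input (spectral radius formula, spectral mapping theorem for polynomials, Riesz representation) is borrowed from the cited functional analysis texts, and the work specific to $B_G$ is just the bookkeeping. Since $A$ is symmetric and bounded, $\operatorname{Sp}(A)$ is a non-empty compact subset of $\mathbb{R}$. For a real polynomial $p$, the operator $p(A)$ is again symmetric, and the spectral mapping theorem gives $\operatorname{Sp}(p(A)) = p(\operatorname{Sp}(A))$. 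Combining this with the equality $\|B\|_{2,op} = \sup\{|\lambda| : \lambda \in \operatorname{Sp}(B)\}$ for symmetric $B$ (itself obtained from $\|B^2\|_{2,op} = \|B\|_{2,op}^2$ by iteration and the spectral radius formula) yields $\|p(A)\|_{2,op} = \sup_{\lambda \in \operatorname{Sp}(A)} |p(\lambda)|$. Hence $p \mapsto p(A)$ is isometric for the sup-norm, and by Stone--Weierstrass it extends uniquely to an isometric $*$-homomorphism $f \mapsto f(A)$ from $C(\operatorname{Sp}(A))$ into $B_G$.

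Next I would introduce the scalar spectral measures. For $u, v \in l^2(G)$ the functional $f \mapsto \langle f(A) u, v\rangle$ is linear and bounded on $C(\operatorname{Sp}(A))$, so the Riesz representation theorem provides a unique complex regular Borel measure $\mu_{u,v}$ on $\operatorname{Sp}(A)$ with $\langle f(A) u, v\rangle = \int_{\operatorname{Sp}(A)} f \, \mathrm{d}\mu_{u,v}$ and total variation at most $\|u\|_2 \|v\|_2$. For a bounded Borel function $g$ on $\operatorname{Sp}(A)$ the map $(u,v) \mapsto \int g \, \mathrm{d}\mu_{u,v}$ is then sesquilinear and bounded, hence defines a bounded operator $\Phi(g) \in B_G$. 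One checks that $\Phi$ is linear, that $\Phi(\bar g) = \Phi(g)^*$, that $\Phi(1) = I$, and --- the delicate point --- that $\Phi$ is multiplicative, $\Phi(g_1 g_2) = \Phi(g_1)\Phi(g_2)$. This last property is proved in two approximation steps: first for $g_1$ Borel and $g_2$ continuous, using the relation $\mathrm{d}\mu_{f(A)u,v} = f \, \mathrm{d}\mu_{u,v}$ valid for continuous $f$ by the homomorphism property; then for both $g_1,g_2$ Borel, each step invoking dominated convergence against the finite measures $|\mu_{u,v}|$. I expect this multiplicativity, together with the measure-theoretic bookkeeping it requires, to be the main obstacle.

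Then I would set $E(\omega) := \Phi(\mathbf{1}_\omega)$ for every Borel $\omega \subseteq \operatorname{Sp}(A)$. Since $\mathbf{1}_\omega$ is real-valued with $\mathbf{1}_\omega^2 = \mathbf{1}_\omega$, each $E(\omega)$ is a self-adjoint idempotent, i.e. a projection; $E(\emptyset) = \Phi(0) = 0$ and $E(\operatorname{Sp}(A)) = \Phi(1) = I$; the identity $\mathbf{1}_{\omega \cap \omega'} = \mathbf{1}_\omega \mathbf{1}_{\omega'}$ combined with multiplicativity gives $E(\omega \cap \omega') = E(\omega)E(\omega') = E(\omega')E(\omega)$; and $\mathbf{1}_{\omega \cup \omega'} = \mathbf{1}_\omega + \mathbf{1}_{\omega'}$ for disjoint $\omega, \omega'$ gives finite additivity. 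Thus $E$ is an identity resolution over the Borelians of $\operatorname{Sp}(A)$. Finally, specializing $\langle f(A)u,v\rangle = \int f \, \mathrm{d}\mu_{u,v} = \int f \, \mathrm{d}\langle E(\cdot)u,v\rangle$ to $f(\lambda)=\lambda$ yields $\langle Au,v\rangle = \int_{\operatorname{Sp}(A)} \lambda \, \mathrm{d}\langle E(\lambda)u,v\rangle$ for all $u,v$, which is exactly the asserted identity $A = \int_{\operatorname{Sp}(A)} \lambda \, \mathrm{d}E(\lambda)$, the right-hand side being the bounded operator associated with that sesquilinear form and well-defined because $\lambda \mapsto \lambda$ is bounded on the compact set $\operatorname{Sp}(A)$.

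For uniqueness, suppose $E'$ is another identity resolution on the Borelians of $\operatorname{Sp}(A)$ with $A = \int \lambda \, \mathrm{d}E'(\lambda)$. Using only the projection and finite-additivity axioms one gets $A^n = \int \lambda^n \, \mathrm{d}E'(\lambda)$ for all $n \ge 0$, hence $p(A) = \int p \, \mathrm{d}E'$ for every polynomial $p$, hence $f(A) = \int f \, \mathrm{d}E'$ for every $f \in C(\operatorname{Sp}(A))$ by uniform approximation and continuity of both sides in $f$. Then, for each fixed $u,v$, the regular Borel measure $\langle E'(\cdot)u,v\rangle$ integrates every continuous function to $\langle f(A)u,v\rangle = \int f \, \mathrm{d}\mu_{u,v}$, so the uniqueness part of the Riesz representation theorem forces $\langle E'(\cdot)u,v\rangle = \mu_{u,v} = \langle E(\cdot)u,v\rangle$; since $u$ and $v$ are arbitrary, $E' = E$.
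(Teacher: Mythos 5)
Your outline is correct: it is the standard construction of the spectral measure via the continuous functional calculus, its Borel extension through the Riesz representation theorem, and $E(\omega):=\Phi(\mathbf{1}_\omega)$, which is exactly the argument in the reference the paper cites. The paper itself gives no proof of this classical theorem (it is recalled from Rudin's \emph{Functional Analysis}), so there is nothing to compare beyond noting that your sketch reproduces the cited textbook proof, including its genuinely delicate step (multiplicativity of the Borel calculus) and the uniqueness argument via Riesz.
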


% Since $W$ is self-adjoint in our case, it is a normal operator, so Theorem \ref{thm_spectral_decomposition} may be applied. 

From the last theorem, we obtain the spectral representation of the adjacency operator $W$ thanks to an identity resolution $E$ over the Borelians of $\operatorname{Sp}(W)$ 
$$W = \int_{\operatorname{Sp}(W)} \lambda \mathrm{d} E (\lambda).$$

% Moreover, with this decomposition, we can give a spectral representation for the powers $W^k, k \in \mathbb{Z}$ of $W$
Obviously, we have
$$W^k = \int_{\operatorname{Sp}(W)} \lambda^k \mathrm{d} E (\lambda), k \in \mathbb{N}.$$

 Define now, for any $i \in G$, the sequences $\delta_i$ in $l^2(G)$ by
$$\delta_i := (\ind_{k = i})_{k \in G}.$$

For any $i,j \in G$, the sequences $\delta_i$ and $\delta_j$ define the real measure $\mu_{ij}$ by
$$\forall \omega \subset \operatorname{Sp}(W), \mu_{ij}(\omega) : = 
% E_{\delta_i \delta_j}(\omega) = 
\langle E(\omega)\delta_i,\delta_j\rangle_{l^2(G)}. $$

Hence, we can write : 
$$\forall k \in \mathbb{N}, \forall i,j \in G, \left(W^k\right)_{ij} = \int_{\operatorname{Sp}(W)} \lambda^k \mathrm{d}\mu_{ij}.$$

This family of measures $\mu_{ij},i,j \in G$ will be used in the whole paper. They convey both spectral information of the adjacency operator, and combinatorial information on the number of path and loops in $\mathbf{G}$. Indeed, the quantity $\left(W^k\right)_{ij}$ is the number of path (counted with their weights) going from $i$ to $j$ with length $k$. 

Note also that all diagonals measures $\mu_{ii}, i \in G$ are probability measures.

\subsection{The adjacency operator of $\mathbb{Z}$ and its spectral decomposition}

In the usual case of $\mathbb{Z}$, an explicit expression for $\mu_{ij}$ can be given.

% Recall that $W^{(\mathbb{Z})}$ denotes the renormalized adjacency operator of $\mathbb{Z}$ :
% 
% $$W^{(\mathbb{Z})}_{ij}=\frac{1}{2}\ind_{\{|i-j|=1\}}, (i,j\in\mathbb{Z}).$$ 

Denote $T_{k}(X)$ the $k^\text{th}$-Chebychev polynomial ($k \in \mathbb{N}$). We can provide the spectral decomposition of $W^{(\mathbb{Z})}$ ($W^{(\mathbb{Z})}$ has been defined in Equation \ref{e:defz}).
$$\forall i,j \in \mathbb{Z}, \left(\left(W^{(\mathbb{Z})}\right)^k\right)_{ij} = \frac{1}{\pi} 
\int_{[-1,1]} \lambda^k \frac{T_{\left|j-i\right|}(\lambda)}{\sqrt{1- \lambda^2}} \mathrm{d}\lambda  .$$
This shows that, in this case, and for any $i,j \in G$, the measure $\mathrm{d}\mu_{ij}$ is absolutely continuous with respect to the Lebesgue measure,
 and its density is given by
$$\frac{\mathrm{d}\mu_{ij}}{\mathrm{d}\lambda} = \frac{1}{\pi}\frac{T_{\left|j-i\right|}(\lambda)}{\sqrt{1- \lambda^2}}.$$

Notice that we recover the usual spectral decomposition pushing forward $\mu_{ij}$ by the function $\cos$ :
$$\forall i,j \in G, \mathrm{d}\hat{\mu}_{ij}(t) := \frac{1}{2\pi} \cos\left((j-i)t \right) \mathrm{d}t.  $$
We get 
$$\forall i,j \in \mathbb{Z}, \left(\left(W^{(\mathbb{Z})}\right)^k\right)_{ij} = \int_{[0,2 \pi]} \cos(t)^k  \mathrm{d}\hat{\mu}_{ij}(t) . $$

% This corresponds to another choice for the identity resolution.
% Further on, this enables us to handle the usual case of processes indexed by $\mathbb{Z}$. 

\subsection{Time series, spectral representation, and $MA_\infty$}\label{ss:maforz}

Our aim is to study some kind of stationary processes indexed by the vertices $G$ of the graph $\mathbf{G}$. To begin with, let us recall  the usual case of $\mathbb{Z}$. In particular, let us introduce Toeplitz operators associated to stationary time series.

Let $\mathbf{X} = (X_i)_{i \in\mathbb{Z}}$ be a stationary Gaussian process indexed by $\mathbb{Z}$. Since $\mathbf{X}$ is Gaussian, stationarity is equivalent to second order stationarity, 
that is,
$\forall i,k \in \mathbb{Z},  \operatorname{Cov}(X_i, X_{i+k}) $ does not depend on $i$.
Thus, we can define 
$$r_k :=  \operatorname{Cov}(X_i, X_{i+k}).$$
Aassume further that $(r_k)_{k \in \mathbb{Z}} \in l^1(\mathbb{Z})$.
This leads to a particular form of the covariance operator $\Gamma$ defined on $l^2(\mathbb{Z})$ by 
$$\forall i,j \in \mathbb{Z}, \Gamma_{ij} := r_{i-j}.$$ 
Recall that $B_{\mathbb{Z}}$ denotes here the set of bounded Hilbertian operators on $l^2(\mathbb{Z})$.
Notice that, since $(r_k)_{k \in \mathbb{Z}} \in l^1(\mathbb{Z})$, we have $\Gamma \in B_\mathbb{Z}$ (see for instance \cite{davis} for more details). This bounded operator
is constant over each diagonals, and is therefore called a Toeplitz operator (see also \cite{bottcher} for a general introduction to Toeplitz operators).

% Toeplitz operators enjoy the following representation,
As $(r_k)_{k \in \mathbb{Z}} \in l^1(\mathbb{Z})$, we have
$$\forall i,j \in \mathbb{Z}, \mathcal{T}(g)_{ij}:=\Gamma_{ij} = \frac{1}{2\pi}\int_{[0,2 \pi]} g(t) \cos\left((i-j)t\right)\mathrm{d}t, $$
where $g$ is the spectral density of the process $\mathbf{X}$, defined by
$$g(t) := 2\sum_{k \in \mathbb{N}^*} r_k \cos(kt)+r_0 .$$

This expression can be written, using the Chebychev polynomials $(T_k)_{k \in \mathbb{N}}$, 
$$g(t) := 2\sum_{k \in \mathbb{N}^*} r_k T_k\left(\cos(t)\right)+r_0 T_0\left(\cos(t)\right).$$

% This falls in the framework of Theorem \ref{thm_conv}, by setting
Let, for $\lambda \in [-1,1]$,
\begin{equation}
\label{e:densite}
 f(\lambda) :=  2\sum_{k \in \mathbb{N}^*} r_k T_k(\lambda)+r_0 T_0(\lambda).
\end{equation}
 
We get, using the family $(\hat{\mu}_{ij} )_{i,j \in \mathbb{Z}}$ defined above,
$$\forall i,j \in \mathbb{Z}, \Gamma_{ij} = \int_{[0,2 \pi]} f\left(\cos(t)\right) \mathrm{d}\hat{\mu}_{ij}(t). $$

Notice that the last expression may also be written as $\Gamma = f(W^{(\mathbb{Z})}) $, and the convergence of the operator valued series defined by Equation \ref{e:densite} is ensured by the boundedness of $W^{(\mathbb{Z})}$ and of the Chebychev polynomials ($T_k([-1,1]) \subset [-1,1], \forall k \in \mathbb{Z}$), together with the summability of the sequence 
$(r_k)_{k \in \mathbb{Z}}$.

We will extend usual $MA$ processes to any graph, using this previous remark. This will be the purpose of Section \ref{s:analytic_construction}.

Let us recall some properties about the moving average representation $MA_\infty$ of a process on $\mathbb{Z}$. This representation exists as soon as the $\log$ of the spectral density is integrable (see for instance \cite{davis}).
In this case, there exists a sequence $(a_k)_{k \in \mathbb{N}}$, with $a_0 = 1$, and a Gaussian white noise 
$\mathbf{\epsilon} = (\epsilon_k)_{k \in \mathbb{Z}}.$,
 such that the process $\textbf{X}$ may be written as
$$\forall i \in \mathbb{Z}, X_i = \sum_{k \in \mathbb{N}} a_k \epsilon_{i-k}.$$

Defining the function $h$ over the unit circle $\mathcal{C}$ by
$$\forall x \in \mathcal{C}, h(x) = \sum_{k\in \mathbb{N}} a_k x^k,$$
 we recover, with a few computations, 
the spectral decomposition of the covariance operator $\Gamma$ of $\mathbf{X}$ :

$$\forall i,j \in \mathbb{Z}, \Gamma_{ij} = \int_{[0,2 \pi]} \left|h(e^{it})\right|^2 \mathrm{d}\hat{\mu}_{ij}(t). $$
This implies the equality
$$f\left(\cos(t)\right) = \left|h(e^{it})\right|^2.  $$

Recall that when $h$ is a polynomial of degree $p$ (with non null first coefficient), the process is said to be $MA_p$. In this case, $f$ is also a polynomial of degree $p$. 
Reciprocically, if $f$ is a real polynomial of degree $p$, and as soon as $f\left(\cos(t)\right)$ is even, and non-negative for any $t \in [0,2\pi]$, 
the Fejér-Riesz theorem provides a factorization of $f\left(\cos(t)\right) $ such that $f\left( \cos(t)\right) = \left| h(e^{it})\right|^2$ (see for instance \cite{krein}). This proves that
$\mathbf{X}$ is $MA_p$ if, and only if, its covariance operator may be written $f(W^{(\mathbb{Z})})$, where $f$ is a polynomial of degree $p$.

This remark is fundamental for the construction we provide in the following section (see Definition \ref{def_arma}).

\subsection{Whittle maximum likelihood estimation for time series}

Here, we recall briefly the Whittle's approximation for time series. Let $\Theta$ be a compact interval of $\mathbb{R}^d, d \geq 1$, and $(f_\theta)_{\theta \in \Theta}$ be a parametric family of spectral densities. Let $\theta_0 \in\Theta$, and assume that $(X_i)_{i \in \mathbb{Z}}$ is a Gaussian time series whith spectral density $f_{\theta_0}$.

If we observe $\mathbf{X}_n:= (X_i)_{i = 1, \cdots n}, n>0$, we can define the maximum lokelihood estimate $\hat{\theta}_n$ of $\theta_0$ as:
$$\hat{\theta}_n := \arg \max L_n(\theta,\mathbf{X}_n),  $$
where
$$ L_n (\theta,\mathbf{X}_n) :=-\frac{1}{2} \left( n \log (2 \pi) + \log \det \left(\mathcal{T}_{n}(f_\theta)\right) + \mathbf{X}_{n }^T \big(\mathcal{T}_{n}(f_\theta)\big)^{-1}\mathbf{X}_{n} \right).$$

This estimator is consistent as soon as the spectral densities are regular enough, and under assumptions on the function $\theta \mapsto f_\theta$ (see for instance \cite{AzDa}). However, in practical situations, it is hard to compute. The Whittle's estimate is built by maximizing an approximation of the likelihood instead of the likelihood itself:
$$\tilde{\theta}_n := \arg \max \tilde{L}_n(\theta,\mathbf{X}_n),  $$
where
$$\tilde{ L}_n (\theta,\mathbf{X}_n) :=-\frac{1}{2} \left( n \log (2 \pi) +n \int_{[0,2\pi]} \log  \left(f_\theta(\lambda) \right) \mathrm{d}\lambda + \mathbf{X}_{n }^T \mathcal{T}_{n}(\frac{1}{f_\theta})\mathbf{X}_{n} \right).$$

The Whittle estimate is also consistent and asymptotically normal and efficient, as soon as the spectral densities are regular enough.

The consistency of the Whittle estimate relies on the Szegö's Lemma, which provide a bound on the error between $\frac{1}{n} \log \det \left(\mathcal{T}_{n}(f_\theta)\right)$ and $\int_{[0,2\pi]} \log  \left(f_\theta(\lambda) \right)$. There exists many versions of this Lemma (see for instance \cite{AzDa}, \cite{sego}). 

In this work, we are interested in a weak version given by Azencott and Dacunha-Castelle in \cite{AzDa}. The lemma relies on the following fondamental inequality:
Let $f(x) =\sum_{k \in \mathbb{N}}f_kx^k$ and $g(x) = \sum_{k \in \mathbb{N}} g_k x^k$ be two analytics function on the complex unitar disk. Then we have 
\begin{equation}
\sum_{i,j = 1, \cdots, N} \Bigg| \bigg( \mathcal{T}_N(f)\mathcal{T}_N(g) - \mathcal{T}_N(fg) \bigg)_{ij} \Bigg| \leq \frac{1}{2} \sum_{k \in \mathbb{N}} (k+1)f_k  \sum_{k \in \mathbb{N}} (k+1)g_k.  
\end{equation}

In the following, we aim at developing the same kind of tools for processes indexed by a graph.

% Thanks to this remark, we will build $MA$ processes on any graph $\mathbf{G}$ in the same way. The following section is devoted to this construction, and 
% the last remark actually gives the equivalence between usual $MA_p$ over $\mathbb{Z}$, and the one we
%  will define over any graph $\mathbf{G}$.

%\input{Basic_properties_2011_03_28.tex}
%\input{Analytic_construction_2011_03_29.tex}
\section{Spectral definition of $ARMA$ processes}\label{s:analytic_construction}

In this section, we will define moving average and autoregressive processes over the graph $\mathbf{G}$. 
% In the whole section, we deal with the adjacency
% weighted operator $W$ of the graph $\mathbf{G}$.

% For a more general construction, we allow us to modify the weighted operator $W$ and so the graph $\mathbf{G}$, without 
% adding any edges, as shown in Section \ref{algebraical_construction}. This modifications are, in a sense, isotropic, and leads to stationary 
% operators defined in Section \ref{algebraical_construction}.

As explained in the last section, since $W$ is bounded and self-adjoint, $\operatorname{Sp}(W)$ is a non-empty compact subspace of $\mathbb{R}$,
and $W$ admits a spectral decomposition thanks to an identity resolution $E$, given by
$$W = \int_{\operatorname{Sp}(W)} \lambda \mathrm{d} E (\lambda) .$$

We define here $MA$ and $AR$ Gaussian processes, with respect to the operator $W$, by defining the corresponding classes of covariance operators, since 
the covariance operator fully characterizes any Gaussian process.

\begin{defi} \label{def_arma}
Let $(X_i)_{i \in G}$ be a Gaussian process, indexed by the vertices $G$ of the graph $\mathbf{G}$, and $\Gamma$ its covariance operator.

If there exists an analytic function $f$ defined on the convex hull of $\operatorname{Sp}(W)$, such that
$$\Gamma =\int_{\operatorname{Sp}(W)} f(\lambda) \mathrm{d} E (\lambda), $$ 

we will say that $X$ is
\begin{itemize}
 \item $MA_q$ if $f$ is a polynomial of degree $q$.
\item $AR_p$ if $\frac{1}{f}$ is a polynomial of degree $p$ which has no root in the convex hull of $\operatorname{Sp}(W)$.
\item $ARMA_{p,q}$ if $f = \frac{P}{Q}$ with $P$ a polynomial of degree $p$ and $Q$ a polynomial of degree $q$ with no roots in the convex hull of $\operatorname{Sp}(W)$.
\end{itemize}
Otherwise, we will talk about the $MA_\infty$ representation of the process $\mathbf{X}$.
We call $f$ the \textbf{spectral density} of the process $\mathbf{X}$, and denote its corresponding covariance operator by 
$$\Gamma = \mathcal{K}(f). $$
\end{defi}

\begin{remark}
 Actually, this last construction may also be understood as
$$\Gamma = \mathcal{K}(f) = f(W),$$
in the sense of normal convergence of the associated power series. However, the spectral representation will be useful in the following.
%  (see Section \ref{s:spectral}) and 
Even if we consider only regular processes in this works, the definition using the spectral representation allows weaker regularity than the definition using the normal convergence of the associated power series.
\end{remark}

% The dependency on $W$ is written here because, as said in the introduction of this section, we will in Section \ref{algebraical_construction} provide, for a given
% graph $\mathbf{G}$, other operators we may use for this construction of $ARMA$ processes.
% 
% Actually, we will be allowed to take the discrete Laplacian (defined in section \ref{algebraical_construction}) or the renormalized one. 
% Since we can renormalizes to get all entries in $[-1,1]$, this may be seen as a simple modification of the graph. 

This kind of modeling is interesting when the interactions are locally propagated
(that may be for instance a good modeling for traffic problems.).

The notation $\mathcal{K}(.)$ has to be understood by analogy with the notation $\mathcal{T}(.)$ used for Toeplitz operators.

Notice %also
 that, in the usual case of $\mathbb{Z}$, and for finite order $ARMA$, we recover the usual definition as shown
 in Subsection \ref{ss:maforz}. So, the last definition may be seen as an extension of isotropic $ARMA$ for any graph $\mathbf{G}$.
Besides, note that this extension is given by the equivalence, for any $g \in \mathbb{L}^2\left([0, 2\pi]\right)$, such that $\int_{[0, 2\pi]} \log(g) <+ \infty$,
$$\forall f \in \mathbb{L}^2([-1,1]), \left(g = f\left(\cos(t)\right) \Leftrightarrow  \mathcal{T}(g) = \mathcal{K}(f)\right). $$
This means that, in the usual case $\mathbf{G}=\mathbb{Z}$, the definition of spectral density in our framework is the usual one, up to an change of variable $\lambda = \cos(t)$ (see Subection \ref{ss:maforz}).

Now, we get a representation of moving average processes over any graph $\mathbf{G}$. The following section gives the main result of this paper. It deals with 
the maximum likelihood identification.
% , and Section \ref{s:algebraical_construction} proposes a general definition of stationary processes indexed by the vertices of
% a graph, and shows the stationarity of $MA$ representation thanks to this definition. We 
% underline here that the definition given in Section \ref{s:algebraical_construction} allows constructions of $MA$ processes built with some ``isotropic modifications'' of $W$.

\section{Convergence of  maximum approximated likelihood estimators}\label{s:main_thm}
In this section as before, $\mathbf{G} = (G,W)$ is a graph with bounded degree.
% ($\operatorname{deg}(\mathbf{G}) < + \infty$)
Let also
$(X_i)_{i \in G}$ be a Gaussian spatial process indexed by the vertices of $\mathbf{G}$ with spectral density $f_{\theta_0}$ (defined in Section 
\ref{s:analytic_construction}) depending on an unknown parameter
$\theta_0 \in \Theta$. We aim at estimating $\theta_0$. For this, we will generalize classical maximum likelihood estimation of time series.
 
We will also develop a Whittle's approximation for $ARMA$ processes indexed by the vertices of a graph. 
% Instead of maximizing the likelihood, we use an approx
% 
% approximation facilitates the computation
% That is an approximation of the 
% likelihood that provides convergence of likelihood estimate. 
We follow here the guidelines of the proof given in \cite{AzDa} for
the usual case of time series.

\subsection{Framework and Assumptions}
\label{ss:framework}
Let us now specify the framework of our study. Let $(\mathbf{G}_n)_{ n \in \mathbb{N}}$ be a growing 
sequence of finite nested subgraphs. This means that if $\mathbf{G}_n = (G_n,W_n)$, we have $G_n \subset G_{n+1} \subset G$ and that for any $i,j \in G_n$, it holds that  $W_n(i,j) = W(i,j)$. 

% We will use the following notations. 
Let $m_n = \operatorname{Card}(G_n)$. 
We set also
  $$\delta_n = \text{Card}\left\{i \in G_n, \exists j \in G \backslash G_n, W_{ij} \neq 0 \right\}.$$ 

The sequence $(m_n)_{n \in \mathbb{Z}}$ may actually be seen as the ``volume'' of the graph $\mathbf{G}_n$, and $\delta_n$ as the size of the 
boundary of $G_n$. 
For the special case $G = \mathbb{Z}^d$ and $G_n = [-n,n]^d$, we get $m_n = (2n+1)^d$ and $\delta_n = 2d(2n+1)^{d-1}$.

The ratio $\frac{\delta_n}{m_n}$ is a natural quantity associated to the expansion of the graph that also appears 
in isoperimetrical \cite{isoper} and graph expander issues.
We will assume here that this ratio goes to $0$ when the size of the graph goes to infinity. In short, we set 
\begin{hyp}\label{hyp_graph_1} 
$~~  $
% \begin{itemize}
%  \item $\sup_{i,j \in G} W_{ij} \leq \frac{1}{D} $
 $\delta_n = o(m_n)$
% \end{itemize}
\end{hyp}

% Notice here that the first assumption implies that $\operatorname{Sp}(W) \subset \left[-1,1\right]$.
This assumption is a non-expansion criterion. The graph has to be amenable, which is satisfied for the last examples $G = \mathbb{Z}^d$ and $G_n = [-n,n]^d$,
 but not for a homogeneous tree, whatever the choice of the sequence of subgraphs $(\mathbf{G_n})_{n \in\mathbb{N}}$ is.

We will now choose a parametric family of covariance operators of $MA$ processes as defined in the last section. 
First, let $\Theta$ be a compact interval
of $\mathbb{R}$. 

We point out that for sake of simplicity, we choose a one-dimensional parameter space $\Theta$. Nevertheless, all the results could be easily extended to the case $\Theta \subset \mathbb{R}^k, k \geq 1$.

Define $\mathcal{F}$ as the set of positive analytic functions over the convex hull of $\operatorname{Sp}(W)$.

Let also $(f_\theta)_{ \theta \in \Theta}$ be a parametric family of functions of $\mathcal{F}$.
They define a parametric set of covariances on $G$ (see Section \ref{s:analytic_construction}) by 
$$\mathcal{K}(f_\theta) = f_\theta(W).$$

As in \cite{AzDa}, we will need a strong regularity for this family of spectral densities. 

Let us introduce a regularity factor for any analytic function 
$$f \in \mathcal{F}, f(x) = \sum_k f_k x^k \left(x \in \operatorname{Sp}(W)\right),$$
 by setting
\begin{equation}
\alpha(f) := \sum_{k \in \mathbb{N}} \left| f_k \right| (k+1).  
\end{equation}

Now, let $\rho>0$ and define,
\begin{equation}
\mathcal{F}_\rho := \left\{ f \in \mathcal{F} ,\alpha(\log(f)) \leq \rho\right\}. 
\end{equation}
Notice that for any $f\in \mathcal{F}_\rho$, we have $\alpha(f) \leq e^{\rho}, \alpha(\frac{1}{f}) \leq e^{\rho}$.

We need the following assumption
\begin{hyp}\label{hyp_function}$ $

% We make the following assumptions :
\begin{itemize}
 \item The map $\theta \rightarrow f_\theta$ is injective.
\item For any $ \lambda \in \text{Sp}(W)$, the map $\theta \rightarrow f_\theta(\lambda)$ is continuous.
\item $\forall \theta \in \Theta, f_\theta \in \mathcal{F}_\rho $ .
\end{itemize}
\end{hyp}

From now on, consider $\theta_0 \in \mathring{\Theta}$. Let $\mathbf{X}$ be a centered Gaussian $MA_\infty$ process over $\mathbf{G}$ 
with covariance operator $\mathcal{K}(f_{\theta_0})$ (see Section \ref{s:analytic_construction}). 

We observe the restriction of this process on the subgraph $\mathbf{G}_n$ defined before. Our aim is to compute 
the maximum likelihood estimator of $\theta_0$.
Let $X_n = (\textbf{X}_i)_{i\in G_n}$ be the observed process and $\mathcal{K}_n(f_\theta)$ be its 
covariance :
$$X_n \sim \mathcal{N}\left(0,\mathcal{K}_n(f_{\theta_0}) \right).$$

The corresponding log-likelihood at $\theta$ is

$$L_n (\theta) :=-\frac{1}{2} \left( m_n \log (2 \pi) + \log \det \left(\mathcal{K}_{n}(f_\theta)\right) + X_{n }^T \big(\mathcal{K}_{n}(f_\theta)\big)^{-1}X_{n} \right).$$

% Recall what happens in the usual case of time series ($G = \mathbb{Z}$, $G_n = [-n,n]$). 
% Using the notations of Section \ref{s:rappels}, we assume that $\mathbf{X}$ is a stationary process indexed by $\mathbb{Z}$ with covariance operator $\Gamma$.
% Recall that, as explained in Section \ref{s:analytic_construction}, if the usual spectral density $g \in \mathbb{L}^2([0,2\pi])$ is such
% that $\log(g)$ is integrable, then the function $f \in \mathbb{L}^2([-1,1])$ defined by $g(t) = f\left(\cos(t)\right)$, is the spectral density of our framework.
% That means that 
% $$\Gamma = \mathcal{K}(f) = f(W^{(\mathbb{Z})} ) = \mathcal{T}(g). $$

As discussed before, in the case $G =  \mathbb{Z}$, it is usual to maximize an approximation of the likelihood. The classical approximation is the Whittle's one (\cite{sego}), 
where 
$$\frac{1}{n}\log \det \left(\mathcal{T}_{n}(g)\right)$$
 is replaced by 
$$\frac{1}{2\pi}\int_{[0,2\pi]} \log \left(g\left(t\right)\right) \mathrm{d}t .$$

Back to the general case, we aim at performing the same kind of approximation. For this, we will need the following assumption
to ensure the convergence of $\log \det \left(\mathcal{K}_{n}(f_\theta)\right)$ (see Section~\ref{s:rappels} 
for the definition of $\mu_{ii}$) :

\begin{hyp}\label{hyp_graph_2} 
There exists a positive measure $\mu$, such that
 $$ \frac{1}{m_n} \sum_{i \in G_n} \mu_{ii} \underset{n \rightarrow \infty}{\xrightarrow{\mathcal{D} }} \mu.$$
Here, $\mathcal{D}$ stands for the convergence in distribution
\end{hyp}

The limit measure $\mu$ is classically called the spectral measure of $\mathbf{G}$ with respect to the sequence of subgraphs $(\mathbf{G}_n)_{n \in \mathbb{Z}}$ (see \cite{survey_graph} for example).
% In Section \ref{s:spectral}, we give some sufficient conditions to ensure the existence of the spectral measure.

Actually, under Assumption \ref{hyp_graph_1}, Assumption \ref{hyp_graph_2} is equivalent to the convergence of the empirical distribution of eigenvalues of $W_{G_n}$ (here, $W_{G_n}$ denotes the restriction of $W$ over the subgraph $G_n$)
%  (see Section \ref{s:main_thm}), and 
That is, if 
$\lambda^{(n)}_1,\cdots, \lambda^{(n)}_{m_n}$ denote the 
 eigenvalues (written with their multiplicity orders) of $W_{g_n}$, 
Define $$\mu^{[1]}_n := \frac{1}{m_n} \sum_{i = 1}^{m_n} \delta_{\lambda^{(n)}_i}, $$
and
$$\mu^{[2]}_n =  \frac{1}{m_n} \sum_{i \in G_n} \mu_{ii}, $$
% where the measure $\mu_{ii}$ is defined in Section \ref{s:rappels}. Define also

Then, under Assumption \ref{hyp_graph_1}, the convergence of $\mu^{[1]}_n$ to $\mu$ (i.e. Assumption \ref{hyp_graph_2}) is equivalent to the convergence of $\mu^{[2]}_n$ to $\mu$.
% Assumption \ref{hyp_graph_2} deals with the convergence of $\mu^{[1]}_n$ to the spectral measure $\mu$. 
%

%  Actually, under Assumption \ref{hyp_graph_1}, this last assumption is equivalent to the following. 
% 
% \begin{hyp}{Weak convergence of the spectral measure}
% \label{a:mes}
%  $$\mu^{[1]}_n  \underset{n \rightarrow \infty}{\rightarrow} \mu.$$
% in the sense of the weak convergence.
%  \end{hyp}

To prove this equivalence 
% between Assumption \ref{hyp_graph_2} and Assumption \ref{a:mes}, 
we just have to notice that :
\begin{eqnarray*}
\int_{\operatorname{Sp}(W)} \lambda^k \mathrm{d}\mu^{(1)}_n(\lambda)& -& \int_{\operatorname{Sp}(W)} \lambda^k \mathrm{d}\mu^{(2)}_n(\lambda)
\\& =& \frac{1}{m_n} \sum_{i = 1}^{m_n} \left(\lambda^{(n)}\right)_i^k - \frac{1}{m_n} \sum_{i \in G_n} (W^k)_{ii}
\\ & = & \frac{1}{m_n}\operatorname{Tr}\left((W_{G_n})^k\right)- \frac{1}{m_n}\operatorname{Tr}\left((W^k)_{G_n}\right).
\end{eqnarray*}

So that, we get the result by Lemma \ref{lem_hom} (see Section \ref{s:szego}).

%FIXME : Further details are given

As in the case of time series (for $ G = \mathbb{Z}$), we can approximate the log-likelihood. 
It avoids
an inversion of a matrix and a computation of a determinant. Indeed, we will consider the two following approximations.

$$\bar{L}_n (\theta) := -\frac{1}{2}\left(m_n \log (2 \pi) + m_n \int \log(f_\theta(x)) \mathrm{d}\mu(x) + 
X_{n }^T \left(\mathcal{K}_{n}(f_\theta)\right)^{-1}X_{n} \right).$$

$$\tilde{L}_n (\theta) := -\frac{1}{2}\left(m_n \log (2 \pi) + 
m_n \int \log(f_\theta(x)) \mathrm{d}\mu(x) + X_{n }^T \left(\mathcal{K}_{n}\left(\frac{1}{f_\theta}\right)\right)X_{n} \right).$$

Notice that approximated maximum likelihood estimators are not asymptotically normal in general (see for instance \cite{guyonart} for $\mathbb{Z}^d$). Indeed, the 
score associated to the approximated $\log$-likelihood has to be
asymptotically unbiased \cite{AzDa}.  

To overcome this problem in $\mathbb{Z}^d$, the tapered periodogram can be used (see \cite{guyonbook}, \cite{guyonart}, \cite{dahlhaus}). 

Let us consider graph extensions of standard time series models : 

% This is tractable only in two cases :
\begin{itemize}
 \item \underline{The $MA_P$ case :} There exists $P>0$ such that the true spectral density $f_{\theta_0}$ is a polynomial of degree bounded by $P$. 
% \textit{We will refer to this case as the $MA_K$ case}.
\item \underline{The $AR_P$ case :} There exists $P>0$ such that \textit{all} the spectral densities (for any $\theta \in \Theta$) of the parametric set are such that 
$\frac{1}{f_\theta}$ is a polynomial of degree bounded by $P$.
% \textit{We will refer to this case as the $AR_K$ case}.
\end{itemize}

So, to define the good approximated $\log$-likelihood, we first introduce the unbiased periodogram in each of the last cases. 
Now, let $P>0$.

% We first count, for any couple of vertices $(i,j) \in G^2$,  the number of path (counted with their weight) 
% going from $i$ to $j$ with length $p = 0, \cdots, P$.
% 
% For this, define the $P$-type of a couple of vertex $(i,j) \in G^2$ as the $(P+1)$-tuple given by
% $$t(i,j) := \left(W^{(p)}_{ij}\right)_{0 \leq p \leq P}.$$

% Then, 
Define a subset $V_P$ of signed measures on $\mathbb{R}$ as
 $$V_P: = \left\{ \mu_{ij}, i,j \in G , d_{\mathbf{G}}(i,j) \leq P  \right\},$$ 
where $d_{\mathbf{G}}(i,j), i,j \in G$ stands for the usual distance on the graph $\mathbf{G}$, i.e. the length of the shortest path going from $i$ to $j$.

% This set gives all the possible local measures $\mu_{ij}, i,j \in G$ (see Section \ref{s:rappels}) over the graph $\mathbf{G}$.
% This set gives among any $i,j \in G$ all possible $(P+1)$-tuple of the number of path (counted with their weight) 
% going from $i$ to $j$ with length $p = 0, \cdots, P$. 

% We highlight that it is finite as soon as the entries of the weighted adjacency operator $W$ lie in a finite set, 
% and since the degree of $\mathbf{G}$ is bounded. 

% In the following, we choose $n$ large enough to ensure that 
We will need the following assumption
\begin{hyp}\label{a:graph_structure}
The set $V_P$ of possible local measures over $G$ is finite, and $n$ is large enough to ensure that
$$\forall v \in V_P, \exists (i,j) \in G_n^2, \mu_{ij} = v.$$
\end{hyp}

% This assumption ensure that $G_n$ is large enough to contains at least one couple of vertices of any $P$-type which appears in the graph $G$.

% This is possible since $V_P$ is finite, and since $t(i,j) = \left\{0\right\}^{P+1}$ as soon as the shortest path from $i$ to $j$ is longer than $P$. 
\begin{remark}
This assumption is quite strong, and holds for instance for quasi-transitive graphs (i.e. such that the quotient of the graph with its automorphism group is finite).
%  built by reproducing a finite graph (the pattern) at each vertex of a distance transitive graph.
% In particular, this holds for any Cayley graph or distance transitive graph. 
This assumption may be relaxed, but it is a hard and technical work that will be the issue of a forthcoming paper. 
\end{remark}

Define now the matrix $B^{(n)}$ (the dependency on $P$ is omitted, for clarity) by 
\begin{eqnarray*} B^{(n)}_{ij}  &:=& \frac{\text{Card}\left\{(k,l) \in G_n\times G, \mu_{kl}= \mu_{ij} \right\}       }
{ \text{Card}\left\{(k,l) \in G_n\times G_n, \mu_{kl}= \mu_{ij} \right\}  }, \text{ if } ,d_{\mathbf{G}}(k,l)\leq P 
\\ & := & 1 \text{ if } d_{\mathbf{G}}(k,l)> P.
\end{eqnarray*}

The matrix $B^{(n)}$ gives a boundary correction, comparing, for any $v\in V_P$ the frequency of the interior
 couples of vertices with local measure $v$ with the boundary couples of vertices with local measure $v$.
Actually, this way to deal with the edge effect is very similar to the one used for $\mathbf{G} =\mathbb{Z}^d$ (see \cite{dahlhaus}, \cite{guyonart}).

% The explicit construction of $B^{(n)}$ is given at the end of the section for $G = \mathbb{Z}^2$ and $P=2$. 

As example, let us now describe the case $G = \mathbb{Z}^2$, for $P=2$. In this case $W^{(\mathbb{Z}^2)}$ is
$$\forall i,j,k,l \in \mathbb{Z}, W^{(\mathbb{Z}^2)}\left((i,j),(k,l)\right) := \frac{1}{4} \ind_{\left|i-j\right|+\left|k-l\right| = 1} .$$

In this example, we set $G_n = [1,n]^2$, and we can compute the matrix $B^{(n)}$. Indeed, it only is needed to notice that 
$$\mu_{(i_1,j_1),(i_1+k,j_1+l)} = \mu_{(i_2,j_2),(i_2 + \epsilon_1 k,j_2 + \epsilon_2 l)}, i_1,i_2,j_1,j_2,k,l \in \mathbb{Z}, \epsilon_1, \epsilon_2 \in \left\{-1,1\right\}.$$
This means that the local measure of a couple of vertices depends only of their relative positions (stationarity and isotropy of this set of measure).  
So, we need to count the configurations given by Figure \ref{type} since we consider only couples of vertices $u,v \in \mathbb{Z}^2$ such that $d_{\mathbb{Z}^2}(u,v) \leq 2$.

\begin{figure}[htbp]
 \centering
 \includegraphics[bb=14 14 535 455,scale=0.4,keepaspectratio=true]{./type.eps}
 % type.pdf: 521x441 pixel, 72dpi, 18.38x15.56 cm, bb=0 0 521 441
 \caption{Possible configurations for couple of vertices}
 \label{type}
\end{figure}

% We denote by $u,v$ the vertices of $\mathbb{Z}^2$ and the canonical generator of $\mathbb{Z}^2$ by $e_1,e_2$. 
% This means that if $u = (i,j), i,j \in \mathbb{Z}$, then we can write $v= u + ke_1 +le_2, k,l \in \mathbb{Z}$ for $v = (i+k,j+l)$.
% % If $u = (i,j)$, we write $u = ie_1+je_2$.
% Actually there exists only five $2$-type of vertices (see Figure \ref{type} for the meaning of this construction) :
% $\forall u \in \mathbb{Z}^2$
% % , u = (i,j)$
% \begin{eqnarray*}
%  t_1 &:= \Big(0,0,\frac{1}{4}\Big)&= \mu_{uu}
% \\ t_2 &:= \Big( 0,1,0 \Big)& = \mu_{u,u+e_1}
%  \\    &                       &=   t(u,u-e_1) 
%  \\    &                       &=   t(u,u+e_2) 
%  \\    &                       &=   t(u,u-e_2)
% \\t_3 &:= \Big(0,0,\frac{1}{8} \Big)&= t(u,u+e_1+e_2)
%  \\    &                       &=      t(u,u+e_1-e_2)
%  \\    &                       &=    t(u,u-e_2+e_2)
%  \\    &                       &=   t(u,u-e_1-e_2) 
% \\t_4 &:=   \Big( 0,0,\frac{1}{16} \Big) & =  t(u,u+2e_1) 
%  \\    &                       &=   t(u,u-2e_1) 
%  \\    &                       &=   t(u,u+2e_2)
%  \\    &                       &=   t(u,u-2e_2) 
% \\ t_5 &:= \Big(0,0,0 \Big)&=t(u,v)  \text{ in all the other cases } 
% \end{eqnarray*}

We get, for any $i,j \in \mathbb{Z}$,
\begin{itemize}
 \item $B^{(n)}_{(i,j),(i,j)} = \frac{n^2}{n^2}= 1 .$
 \item $B^{(n)}_{(i,j),(i,j \pm 1)} = B^{(n)}_{(i,j),(i \pm 1,j)} = \frac{4n(n-1)}{4n^2}. $
\item $B^{(n)}_{(i,j),(i \pm 1,j \pm 1)}= \frac{4(n-1)^2}{n^2}. $
\item $B^{(n)}_{(i,j),(i,j\pm 2)}= B^{(n)}_{(i,j),(i \pm 2,j)}= \frac{4n(n-2)}{4n^2}  $
\end{itemize}

One can notice that 
 $$\sup_{ij} \left| B^{(n)}_{ij}-1\right| \underset{n \rightarrow \infty}{\rightarrow} 0.$$
Assumption \ref{hyp_unbiased} ensure that this property holds for the graph we consider.

% FIX ME : Example ?

Back to the general case, let $f\in \mathcal{F}_\rho$. We define the unbiased periodogram as 
$$X_n^T \mathcal{Q}_n(\frac{1}{f}) X_n. $$
where
$$\mathcal{Q}_n(f) := B^{(n)}\odot \mathcal{K}_n(f) .$$
Here, the operation $\odot $ denotes the Hadamard product for matrices, that is 
$$ \forall i,j \in G_n, \left(B^{(n)}\odot \mathcal{K}_n(f)\right)_{ij} = \left(B^{(n)} \right)_{ij}{\mathcal{K}_n(f)}_{ij}.$$
Notice that this is actually a way to extend the so called tapered periodogram (see for instance \cite{guyonart}).

We now define the unbiased empirical log-likelihood, for any $\theta \in \Theta$
$$ L^{(u)}_n (\theta) := -\frac{1}{2}\left(m_n \log (2 \pi) + 
m_n \int \log(f_\theta(x)) \mathrm{d}\mu(x) + X_{n }^T \left(\mathcal{Q}_{n}(\frac{1}{f_\theta})\right)X_{n} \right). $$

We denote by $\hat{\theta}_n$, $\tilde{\theta}_n$, $\bar{\theta}_n$, $\theta^{(u)}$ the maximum likelihood estimators associated to $L_n$, $\tilde{L}_n$, $\bar{L}_n$, $L^{(u)}_n$, respectively.

We will need the following assumption,
\begin{hyp}\label{hyp_unbiased}
There exists a positive sequence $(u_n)_{n \in \mathbb{N}}$ such that, 
$$ u_n \underset{n \rightarrow \infty}{\rightarrow} 0,$$
 and
$$ \sup_{ij} \left| B^{(n)}_{ij}-1\right| \leq u_n.$$
\end{hyp}

Notice that the last assumption holds for example in the case $\mathbf{G} = \mathbb{Z}^d,d>1$.

To prove asymptotic normality and efficiency 
of the estimator $\theta_n^{(u)}$, we will also need the following assumption.
\begin{hyp}\label{hyp_norm}
Assume that
\begin{itemize}
 \item There exists a positive sequence $(v_n)_{n\in \mathbb{N}} $ such that $v_n = o(\frac{1}{\sqrt{m_n}})$ and

$$\forall f \in \mathcal{F}_\rho, \left| \frac{1}{m_n} \operatorname{Tr}(\mathcal{K}_{G_n}(f)) - \int  f \mathrm{d}\mu \right| \leq \alpha(f) v_n.$$

\item For any $\theta \in \Theta$, $f_\theta$ is twice differentiable on $\Theta$ and 
$$\frac{\mathrm{d}}{\mathrm{d}\theta}(f_\theta) \in \mathcal{F}_\rho, \frac{\mathrm{d}^2}{\mathrm{d}\theta^2}(f_\theta)\in \mathcal{F}_\rho.$$
\end{itemize}
\end{hyp}

The first assumption means that the convergence of the empirical distribution of eigenvalues of $\mathcal{K}(f)$ to the spectral measure $\mu$ is faster than $\frac{1}{\sqrt{m_n}}$. It holds for instance for quasi-transitives graphs, with a suitable sequence of subgraphs. The second assumption is more classical. For example it is required in the case $\mathbf{G} = \mathbb{Z}$ (see \cite{AzDa}).
% and same as in the usual case of $\mathbb{Z}$ (see \cite{AzDa}).

% 

\subsection{Convergence and asymptotic optimality}

Let $\rho>0$.
We can now state one of our main result:
\begin{thm}\label{thm_conv}
Under Assumptions \ref{hyp_graph_1}, \ref{hyp_function} and \ref{hyp_graph_2}, the sequences $(\hat{\theta}_n)_{n \in \mathbb{N}}$, 
$(\bar{\theta}_n)_{n \in \mathbb{N}}$, $(\tilde{\theta}_n)_{n \in \mathbb{N}}$ 
converge, as $n$ goes to infinity, $P_{f_{\theta_0}}$-a.s. to the true value $\theta_0$.
If moreover Assumption \ref{hyp_unbiased} holds, this is also true for $(\theta^{(u)}_n)_{n \in \mathbb{N}}$.
\end{thm}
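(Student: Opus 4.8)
The plan is to follow the classical M-estimation strategy as in \cite{AzDa}: exhibit a deterministic contrast function $K(\theta_0,\theta)$ that is minimized uniquely at $\theta=\theta_0$, show that each of the (rescaled, recentered) log-likelihoods converges uniformly in $\theta$ to this contrast $P_{f_{\theta_0}}$-a.s., and then invoke the standard argmax/argmin consistency argument together with the compactness of $\Theta$ and the continuity and injectivity of $\theta\mapsto f_\theta$ from Assumption \ref{hyp_function}. The natural candidate is
$$K(\theta_0,\theta) := \int \left( \log f_\theta(x) + \frac{f_{\theta_0}(x)}{f_\theta(x)} \right)\mathrm{d}\mu(x),$$
which, by the elementary inequality $\log u + u^{-1}\ge 1$ with equality iff $u=1$ applied pointwise to $u=f_\theta/f_{\theta_0}$, is minimized exactly when $f_\theta = f_{\theta_0}$ $\mu$-a.e., hence when $\theta=\theta_0$ by injectivity (using that the $f_\theta$ are analytic, so equality $\mu$-a.e. forces equality on the convex hull of $\operatorname{Sp}(W)$ as soon as $\mu$ is not supported on finitely many points; this minor point should be addressed or assumed away).

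The core work is the uniform convergence. First I would handle the deterministic term: $\frac{1}{m_n}\log\det\mathcal{K}_n(f_\theta) \to \int \log f_\theta\,\mathrm{d}\mu$ uniformly in $\theta$. This is exactly a Szegő-type statement for graphs; by the remark following Assumption \ref{hyp_graph_2}, the eigenvalue distribution $\mu_n^{[1]}$ of $W_{G_n}$ converges to $\mu$, and since $\log f_\theta$ is (uniformly over $\mathcal{F}_\rho$) a nice function of $\lambda$, $\frac{1}{m_n}\sum_i \log f_\theta(\lambda_i^{(n)}) \to \int \log f_\theta\,\mathrm{d}\mu$; uniformity in $\theta$ follows from the $\mathcal{F}_\rho$ bound on $\alpha(\log f_\theta)$ plus the continuity in $\theta$, via an equicontinuity/compactness argument on $\Theta$. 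This takes care of $\bar L_n$ and $\tilde L_n$ once the quadratic term is controlled, and of $\hat L_n$ once one also shows $\frac{1}{m_n}\log\det\mathcal{K}_n(f_\theta)$ is close to $\int\log f_\theta\,\mathrm{d}\mu$ — the same statement. Second, the stochastic quadratic term: $\frac{1}{m_n} X_n^T (\mathcal{K}_n(f_\theta))^{-1} X_n$ (resp.\ with $\mathcal{K}_n(1/f_\theta)$, resp.\ $\mathcal{Q}_n(1/f_\theta)$) must converge $P_{f_{\theta_0}}$-a.s., uniformly in $\theta$, to $\int (f_{\theta_0}/f_\theta)\,\mathrm{d}\mu$. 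Writing $X_n = \mathcal{K}_n(f_{\theta_0})^{1/2} Z_n$ with $Z_n$ standard Gaussian, the expectation of the quadratic form is $\frac{1}{m_n}\operatorname{Tr}\big(\mathcal{K}_n(f_{\theta_0})\,\mathcal{K}_n(1/f_\theta)\big)$; one shows this trace is asymptotically $\frac{1}{m_n}\operatorname{Tr}\big(\mathcal{K}_n(f_{\theta_0}/f_\theta)\big)\to\int (f_{\theta_0}/f_\theta)\,\mathrm{d}\mu$, the first step being precisely where the graph Szegő lemmas of Section \ref{s:szego} enter — the operator-product-versus-product-operator estimate $\|\mathcal{K}_n(f)\mathcal{K}_n(g)-\mathcal{K}_n(fg)\|$ controlled by boundary terms $\delta_n=o(m_n)$ via Assumption \ref{hyp_graph_1}, in analogy with the fundamental Toeplitz inequality quoted in Section \ref{s:rappels}. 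The fluctuation of the quadratic form around its mean is handled by a Gaussian concentration / Hanson–Wright bound plus Borel–Cantelli for the a.s.\ statement, and uniformity over $\theta$ again by equicontinuity. For $\theta^{(u)}_n$ one repeats the argument with $\mathcal{Q}_n$ in place of $\mathcal{K}_n$; Assumption \ref{hyp_unbiased} ($\sup_{ij}|B^{(n)}_{ij}-1|\le u_n\to 0$) shows the Hadamard correction changes the quadratic form by a vanishing amount, so the same limit $K(\theta_0,\theta)$ is reached.

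The main obstacle I expect is the graph Szegő estimate controlling $\frac{1}{m_n}\operatorname{Tr}\big(\mathcal{K}_n(f)\mathcal{K}_n(g)-\mathcal{K}_n(fg)\big)$ and $\frac{1}{m_n}\big(\log\det\mathcal{K}_n(f) - \sum\log f(\lambda_i^{(n)})\big)$, i.e.\ making precise that restriction-to-$G_n$ and application-of-$f$ commute up to a boundary error of order $\delta_n/m_n$; this is where the combinatorial structure of paths in $\mathbf{G}$ (Lemma \ref{lem_hom} and its relatives in Section \ref{s:szego}) does all the real work, and it replaces the explicit Fourier computations available for $\mathbb{Z}^d$. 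Everything else — the contrast inequality, compactness, equicontinuity in $\theta$, Gaussian concentration for the quadratic forms, and the final argmax argument — is routine once those lemmas are in hand.
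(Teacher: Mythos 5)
Your proposal follows essentially the same route as the paper: the same Kullback-type contrast minimized uniquely at $\theta_0$ via $-\log x + x - 1 \geq 0$, the graph Szegő lemma (Lemma \ref{lem_hom} and Corollary \ref{c:det}) for the $\log\det$ and trace approximations, exponential concentration of the Gaussian quadratic forms plus Borel--Cantelli for the a.s.\ convergence (the paper uses an explicit Laplace-transform Chernoff bound where you invoke Hanson--Wright, an immaterial difference), equicontinuity in $\theta$ plus compactness for uniformity, and Assumption \ref{hyp_unbiased} to transfer the result from $\mathcal{K}_n$ to $\mathcal{Q}_n$. The one point you flag --- that $f_\theta=f_{\theta_0}$ only $\mu$-a.e.\ a priori, so injectivity needs $\mu$ to not be finitely supported --- is a genuine subtlety the paper itself glosses over, and your plan is otherwise a faithful match to the paper's proof.
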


\begin{proof}
The proof follows the guidelines of \cite{AzDa}. We highlight the main changes performed here.
First, we define the Kullback information on $G_n$ of $f_{\theta_0}$ with respect to $f \in \mathcal{F}_\rho$, by

$$\mathbb{IK}_n(f_{\theta_0},f) := \mathbb{E}_{P_{f_{\theta_0}}}\left[ - \log(\frac{\mathrm{d}P_{f}}{\mathrm{d}P_{f_{\theta_0}}})\right]. $$

and the asymptotic Kullback information (on $\mathbf{G}$) by

$$\mathbb{IK}(f_{\theta_0},f) = \lim_n \frac{1}{m_n} \mathbb{IK}_n(f_{\theta_0},f)$$ whenever it is finite.

The convergence of the estimators of the maximum approximated likelihood is a direct consequence of the following lemmas :

\begin{lem}\label{lem_ik}
For any $f\in \mathcal{F}_\rho$, and under Assumptions \ref{hyp_graph_1},  \ref{hyp_function} and \ref{hyp_graph_2}, the asymptotic Kullback information exists and may be written as 
$$\mathbb{IK}(f_{\theta_0},f) = \frac{1}{2}\int \left(-\log(\frac{f_{\theta_0}}{f}) - 1 + \frac{f_{\theta_0}}{f}\right) \mathrm{d} \mu . $$ 
Furthermore, if we set $l_n(\theta,X_n) = \frac{1}{m_n}L_n(\theta,X_n)$, we have that $P_{f_{\theta_0}}$-a.s.,
 $$ l_n(\theta_0,X_n) - l_n( \theta,X_n) \underset{n \rightarrow \infty}{\rightarrow} \mathbb{IK}(f_{\theta_0},f_\theta) %= \frac{1}{2}\int -\log(\frac{f_{\theta_0}}{f_\theta}) - 1 + \frac{f_{\theta_0}}{f_\theta} \mathrm{d} \mu
$$
uniformly in $\theta \in \Theta$.

This property also holds for $\bar{l}_n := \frac{1}{m_n}\bar{L}_n$ and $\tilde{l}_n:= \frac{1}{m_n}\tilde{L}_n$ 

Furthermore, for $P>0$, and for both the $AR_P$ or the $MA_P$ case (see above), this also holds for $l_n^{(u)} := \frac{1}{m_n}L^{(u)}_n$.
\end{lem}

\begin{lem}\label{lem_conv}
Let $f_{\theta_0}$ be the true spectral density, and $(\ell_n)_{n \in \mathbb{N}}$ be a deterministic sequence of continuous functions such that 
 $$ \forall \theta \in \Theta, \ell_n(\theta_0) - \ell_n(\theta) \underset{n \rightarrow \infty}{\rightarrow} \mathbb{IK}(f_{\theta_0},f_\theta)  $$
uniformly as $n$ tends to infinity.
Then, if $\theta_n = \arg \max_\theta \ell_n(\theta)$, we have
$$\theta_n \underset{n \rightarrow \infty}{\rightarrow} \theta_0 .$$
\end{lem}

The proofs of these lemmas are postponed in Appendix (Subsection \ref{ss:proof_lemmas_main_thm}).
\end{proof}

\begin{thm}\label{t:as_norm}
In both the $AR_P$ or $MA_P$ cases, and 
and under all previous assumptions
\ref{hyp_graph_1},  \ref{hyp_function}, \ref{hyp_graph_2}, \ref{a:graph_structure}, \ref{hyp_unbiased}, \ref{hyp_norm}, the estimator $\theta^{(u)}_n$ of $\theta_0$ is asymptotically normal:
$$\sqrt{m_n}(\theta^{(u)}_n - \theta_0 ) \underset{n \rightarrow \infty}{ \xrightarrow{\mathcal{D}} } \mathcal{N}\Bigg(0,\left(\frac{1}{2}\int \left(\frac{f'_{\theta_0}}{f_{\theta_0}}\right)^2\mathrm{d}\mu \right)^{-1}\Bigg) .$$

Furthermore, the Fisher information of the model is 
$$ J(\theta_0):= \frac{1}{2}\int \left(\frac{f'_{\theta_0}}{f_{\theta_0}}\right)^2\mathrm{d}\mu .$$
Hence, the previous estimator is asymptoticly efficient. 
 \end{thm}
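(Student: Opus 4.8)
The plan is to follow the classical route for asymptotic normality of maximum likelihood estimators: a Taylor expansion of the (approximated) score around $\theta_0$, a central limit theorem for the score, and a law of large numbers for the observed information. Concretely, writing $\ell_n^{(u)}(\theta) := \frac{1}{m_n} L_n^{(u)}(\theta)$, the estimator $\theta_n^{(u)}$ satisfies $\frac{\mathrm{d}}{\mathrm{d}\theta}\ell_n^{(u)}(\theta_n^{(u)}) = 0$ for $n$ large (since $\theta_0 \in \mathring\Theta$ and $\theta_n^{(u)} \to \theta_0$ by Theorem \ref{thm_conv}), so a first-order expansion gives
$$\sqrt{m_n}(\theta_n^{(u)} - \theta_0) = -\left(\frac{\mathrm{d}^2}{\mathrm{d}\theta^2}\ell_n^{(u)}(\theta_n^*)\right)^{-1}\sqrt{m_n}\,\frac{\mathrm{d}}{\mathrm{d}\theta}\ell_n^{(u)}(\theta_0),$$
for some intermediate $\theta_n^*$ between $\theta_n^{(u)}$ and $\theta_0$. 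It then suffices to show (i) $\sqrt{m_n}\,\frac{\mathrm{d}}{\mathrm{d}\theta}\ell_n^{(u)}(\theta_0) \xrightarrow{\mathcal{D}} \mathcal{N}(0, J(\theta_0))$, where $J(\theta_0) = \frac12\int (f'_{\theta_0}/f_{\theta_0})^2\,\mathrm{d}\mu$, and (ii) $\frac{\mathrm{d}^2}{\mathrm{d}\theta^2}\ell_n^{(u)}(\theta_n^*) \to -J(\theta_0)$ in probability, uniformly enough in a neighbourhood of $\theta_0$ to absorb the randomness of $\theta_n^*$. Slutsky's lemma then yields the claimed limit, and since the asymptotic variance equals $J(\theta_0)^{-1}$, efficiency follows once $J(\theta_0)$ is identified as the Fisher information, which comes from differentiating the Kullback expression in Lemma \ref{lem_ik} twice at $\theta = \theta_0$.

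For step (i), the score at $\theta_0$ is, up to the deterministic $\log\det$-type term, a quadratic form $X_n^T \frac{\mathrm{d}}{\mathrm{d}\theta}\mathcal{Q}_n(1/f_\theta)\big|_{\theta_0} X_n$ recentered by its mean. Writing $X_n = \mathcal{K}_n(f_{\theta_0})^{1/2} Z_n$ with $Z_n$ standard Gaussian, this becomes $Z_n^T M_n Z_n - \operatorname{Tr}(M_n\,\cdot\,)$ for $M_n = \mathcal{K}_n(f_{\theta_0})^{1/2}\big(\frac{\mathrm{d}}{\mathrm{d}\theta}\mathcal{Q}_n(1/f_\theta)|_{\theta_0}\big)\mathcal{K}_n(f_{\theta_0})^{1/2}$. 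The key analytic input is the Szegö-type machinery from Section \ref{s:szego}: the difference between $\mathcal{Q}_n$ and $\mathcal{K}_n$ is controlled by Assumption \ref{hyp_unbiased} ($\sup_{ij}|B^{(n)}_{ij}-1| \le u_n \to 0$), and products like $\mathcal{K}_n(f)\mathcal{K}_n(g)$ differ from $\mathcal{K}_n(fg)$ by operators whose entrywise $\ell^1$ mass is $O(\delta_n)$ (the graph analogue of the Azencott--Dacunha-Castelle inequality, controlled by $\alpha(f)\alpha(g)$), negligible after dividing by $m_n$ because of Assumption \ref{hyp_graph_1}. One then computes $\operatorname{Var}(\sqrt{m_n}\,\mathrm{score}) = \frac{2}{m_n}\operatorname{Tr}(M_n^2) + o(1)$, and shows via the trace/spectral-measure convergence (Assumption \ref{hyp_norm}, first bullet, which gives $\frac1{m_n}\operatorname{Tr}(\mathcal{K}_{G_n}(f)) \to \int f\,\mathrm{d}\mu$) that this converges to $\frac12\int(f'_{\theta_0}/f_{\theta_0})^2\,\mathrm{d}\mu = J(\theta_0)$. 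The CLT itself for such a quadratic form in Gaussian variables follows from a standard argument (e.g. truncating the spectrum of $M_n$, a Lindeberg/fourth-moment check, or the method of cumulants, using that $\|M_n\|_{2,op}$ stays bounded while $\frac1{m_n}\operatorname{Tr}(M_n^k)$ converges for every $k$).

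For step (ii), one differentiates $\ell_n^{(u)}$ twice; the $m_n\int\log f_\theta\,\mathrm{d}\mu$ term contributes a deterministic piece, and the quadratic form in $X_n$ contributes $\frac1{m_n}X_n^T\frac{\mathrm{d}^2}{\mathrm{d}\theta^2}\mathcal{Q}_n(1/f_\theta)X_n$, whose expectation converges by the same trace arguments (using the second bullet of Assumption \ref{hyp_norm}, that $\frac{\mathrm{d}}{\mathrm{d}\theta}f_\theta,\frac{\mathrm{d}^2}{\mathrm{d}\theta^2}f_\theta \in \mathcal{F}_\rho$, so all the Szegö bounds apply uniformly in $\theta$), while the fluctuation of the quadratic form around its mean is $O_P(1/\sqrt{m_n})$ by a variance computation identical in spirit to step (i). Combining, $\frac{\mathrm{d}^2}{\mathrm{d}\theta^2}\ell_n^{(u)}(\theta) \to -\frac12\int\big(\frac{f'_\theta}{f_\theta}\big)^2\mathrm{d}\mu + (\text{cross terms that vanish at }\theta=\theta_0)$ locally uniformly in $\theta$, which both identifies the limit at $\theta_0$ as $-J(\theta_0)$ and, by uniform continuity, handles the intermediate point $\theta_n^*$.

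The main obstacle is the asymptotic unbiasedness of the score together with the precise variance asymptotics: one must show that after the tapering correction $B^{(n)}$ the bias of the score is $o(1/\sqrt{m_n})$ — this is exactly why Assumptions \ref{hyp_unbiased} and \ref{hyp_norm} (with rates $u_n\to 0$ and $v_n = o(1/\sqrt{m_n})$) are imposed — and the bookkeeping that makes all the operator-product and Hadamard-product error terms genuinely negligible at the $\sqrt{m_n}$ scale is the delicate part. In the $\mathbb{Z}^d$ case this is the classical edge-effect problem, and the graph setting requires the Szegö lemmas of Section \ref{s:szego} to be applied with uniform control over $\mathcal{F}_\rho$; the CLT for the quadratic form is comparatively standard once those estimates are in place.
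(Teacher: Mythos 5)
Your overall architecture --- Taylor expansion of the score around $\theta_0$, a CLT for $\sqrt{m_n}(l_n^{(u)})'(\theta_0)$ via the Gaussian quadratic form $Z_n^TM_nZ_n$, almost sure convergence of the observed information at the intermediate point, and identification of $J(\theta_0)$ --- is exactly the paper's (Lemmas \ref{norm}, \ref{var} and \ref{fi_info}), and your variance computation and CLT mechanism (boundedness of $\left\|M_n\right\|_{2,op}$ plus convergence of the normalized traces $\frac{1}{m_n}\operatorname{Tr}(M_n^k)$, then a characteristic-function or cumulant argument) matches the paper's proof of Lemma \ref{norm}.

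There is, however, one genuine gap, precisely at the step you yourself flag as ``the delicate part'': the asymptotic unbiasedness of the score at scale $\sqrt{m_n}$. You propose to control the bias using Assumption \ref{hyp_unbiased} ($\sup_{ij}|B^{(n)}_{ij}-1|\le u_n\to 0$) together with Assumption \ref{hyp_norm}. But Assumption \ref{hyp_unbiased} carries no rate: the resulting bound on $\frac{1}{m_n}\bigl|\operatorname{Tr}\bigl(\mathcal{K}_n(f_{\theta_0})\mathcal{Q}_n(f'_{\theta_0}/f_{\theta_0}^2)\bigr)-\operatorname{Tr}\bigl(\mathcal{K}_n(f_{\theta_0})\mathcal{K}_n(f'_{\theta_0}/f_{\theta_0}^2)\bigr)\bigr|$ is of order $u_n$, and the Szegö error $\mathcal{K}_n(f)\mathcal{K}_n(g)-\mathcal{K}_n(fg)$ contributes a further $O(\delta_n/m_n)$; neither is $o(1/\sqrt{m_n})$ under the stated hypotheses, so after multiplication by $\sqrt{m_n}$ the bias does not vanish. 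This is exactly where the restriction to the $AR_P$/$MA_P$ cases and Assumption \ref{a:graph_structure} enter, and your proposal never uses them: the paper's Lemma \ref{l:correction} shows that when $f$ or $g$ is a polynomial of degree at most $P$, the tapering is an \emph{exact} correction, $\operatorname{Tr}(\mathcal{K}_n(f)\mathcal{Q}_n(g))=\operatorname{Tr}(\mathcal{K}_n(fg))$ with no error term, by a counting argument over the classes of local measures $V_P$. After that identity the only remaining bias is $\frac{1}{m_n}\operatorname{Tr}(\mathcal{K}_n(f'_{\theta_0}/f_{\theta_0}))-\int (f'_{\theta_0}/f_{\theta_0})\,\mathrm{d}\mu=O(v_n)=o(1/\sqrt{m_n})$ by Assumption \ref{hyp_norm}. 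Without this exact cancellation your expansion stalls at a bias of order $\sqrt{m_n}(u_n+\delta_n/m_n)$. The approximate bounds via $u_n$ are the right tool for the \emph{variance} of the score and for the second derivative, where no $\sqrt{m_n}$ factor appears --- there your argument is fine and coincides with the paper's Lemma \ref{lem_unbiased}.
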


\begin{proof}
Here again, we mimic the usual proof by extending the result of \cite{AzDa} to the graph case.

Using a Taylor expansion, we get 
$$(l_n^{(u)})'(\theta_0) = (l_n^{(u)})'(\theta_n^{(u)}) + (\theta_0 - \theta_n^{(u)})(l_n^{(u)})''(\breve{\theta}_n)  ,$$
where $\breve{\theta}_n \in \left]\theta^{(u)}_n, \theta_0\right[.$
As $\theta_n^{(u)} = \arg \max l_n^{(u)}$, we have 
$$(l_n^{(u)})'(\theta_n^{(u)}) = 0.$$
So that,
$$\sqrt{m_n}(\theta_0 - \theta_n^{(u)}) = \left((l_n^{(u)})''(\breve{\theta}_n)\right)^{-1}\sqrt{m_n}(l_n^{(u)})'(\theta_0).$$
The end of the proof relies on three lemmas : 

Lemma \ref{norm} provides the asymptotic normality for $\sqrt{m_n}(l_n^{(u)})'(\theta_0) $. Combined with Lemma \ref{var}, we get the asymptotic normality 
for  $\sqrt{m_n}(\theta_0 - \theta_n^{(u)})$.
Finally, Lemma \ref{fi_info} gives the Fisher information.
%  and this proves that the estimator $\theta_n^{(u)}$ realize the Cramér-Rao bound. This provides 
% the efficiency of this estimator.
\begin{lem}\label{norm}
 $$ \sqrt{m_n}(l_n^{(u)})'(\theta_0) \underset{n \rightarrow \infty}{\xrightarrow{\mathcal{D}}} \mathcal{N}\bigg(0,\frac{1}{2}\int\left( \frac{f'_{\theta_0}}{f_{\theta_0}}\right)^2\mathrm{d}\mu\bigg).$$
\end{lem}

\begin{lem}\label{var}
 $$\left((l_n^{(u)})''(\breve{\theta}_n)\right)^{-1} \underset{n\rightarrow \infty}{\rightarrow} 2\left(\int\left(\frac{f'_{\theta_0}}{f_{\theta_0}}\right)^2\mathrm{d}\mu \right)^{-1}, P_{f_{\theta_0}}-\text{ a.s.} $$
\end{lem}

\begin{lem}\label{fi_info}
 The asymptotic Fisher information is :
$$J(\theta_0) = \frac{1}{2}\int \left(\frac{f'_{\theta_0}}{f_{\theta_0}}\right)^2\mathrm{d}\mu.$$
\end{lem}
The proofs of these lemmas are postponed in Appendix (Subsection \ref{s:technical_lemmas}) 

\end{proof}

\section{Discussion}
\label{s:discussion}
% In this section, we discuss some potentials applications of our results and some perspectives. 

Note first that Theorem \ref{thm_conv} provides consistency of the estimators under weak conditions on the graph. Indeed, amenability ensures Assumption \ref{hyp_graph_1}, for a suitable sequence of subgraphs. Assumption \ref{hyp_graph_2} holds as soon as there is a kind of homogeneity in the graph. The simplest application is quasi-transitives graph. Note that if $\mathbf{G}$ is ``close'' to be quasi-transitive, Assumption \ref{hyp_graph_2} is still true. We also could adapt notions of unimodularity \cite{aldous} or stationarity \cite{curien} to our framework and prove the existence of a spectral measure. Furthermore, Assumption \ref{hyp_graph_2} holds for the real traffic network (this will be explained in a forthcomming paper).

% All this previous work may seem quite formal, but in practical situations, it may be applied to many examples. Here we underline two very special cases.
To build the estimator $\theta_n^{(u)}$, stronger assumptions on the graph $\mathbf{G}$ are needed. Let us discuss two very special cases.
First, Theorem \ref{t:as_norm} may be applied in the $\mathbb{Z}^d$ case with holes, that is in the presence of missing data, up to the condition that they remain few enough. Actually, Assumption \ref{hyp_graph_1} is required, so the boundary of the subgraphs (counting the holes) has to be small in front of the volume of this subgraphs. 

We need furthermore a kind of homogeneity for these holes. For instance, we can assume that the data are missing completely at random. This particular case is interesting for prediction issues.
% , because we have in this case to estimate the covariance structure of the process observed with missing values.\vskip .1in

Another strong potential application is quasi-transitive graphs, as mentioned above. Indeed, take for instance a finite graph (the pattern) and reproduce it at each vertex of an infinite (amenable) vertex-transitive graph. The final graph is then quasi-transitive, and all the previous assumptions hold. 

This seems to be a natural extension of what happens for $\mathbb{Z}^d$. Furthermore, in this situation as in $\mathbb{Z}^d$, our work may also be applied to a process with missing values.\vskip .1in

Note also that conditions of both amenability of the graphs and regularity of spectral densities seem natural, looking at the Szegö's Lemmas (see Section \ref{s:szego}). Indeed, the difference computed in Lemma \ref{lem_hom} is only due to edge effects. \vskip .1in

Thus, there are two ways for relaxing this conditions.
On the one hand, it could be interesting to deal with lower regularity (for instance to study long memory processes) for the spectral densities. On the other hand, it could be also interesting to relax conditions on the graph, for instance for more regular densities. In particular, we could investigate the case of random graphs, and try to pick up homogeneity conditions into the random structure. 
As mentioned above, another natural extension of this work could be done to graphs ``close'' to be quasi-transitive. 

These two limits of our present work are actually two of our main perspectives in this framework.

\section{Simulations} \label{s:simul}
In this section, we give some simulations over a very simple case, where the graph $G$ is built taking some rhombus 
connected by a simple edge both on the left and right (see Figure \ref{f:graphtest}).

\begin{figure}[htbp]
\caption{Graph $G$}
 \centering
 \includegraphics[bb=2 0 471 33,scale=0.7,keepaspectratio=true]{./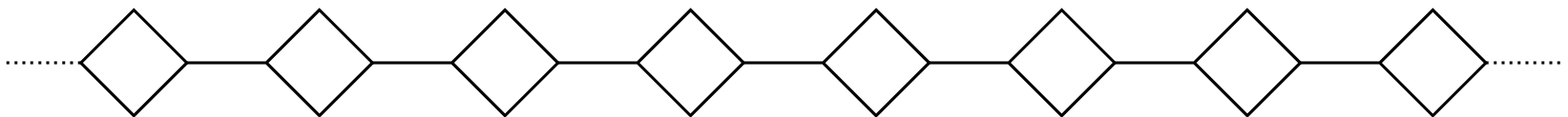}
 % graph_ex2.eps: 0x0 pixel, 300dpi, 0.00x0.00 cm, bb=2 0 471 33
   \label{f:graphtest}
\end{figure}

The sequence of nested subgraphs chosen here is the growing neighborhood sequence (we chose a point $x$ and we take $G_n = \left\{y \in G, d_{\mathbf{G}}(x,y)\leq n\right\}$).
We study an $\text{AR}_2$ model, where, 
\begin{eqnarray*}
 \Theta  = \left] -1,1 \right[,
 \\  f_\theta(x) = \left(\frac{1}{1-\theta x}\right)^2 ( \theta \in \Theta).
\end{eqnarray*}

Here, we take for $W$ the adjacency operator of $G$ normalized in order to get $\sup_{i,j \in G} W_{ij} \leq \frac{1}{\operatorname{deg}(G)}$.
We choose $\theta_0 = \frac{1}{2}$, $m_n = 724$.
We approximate the spectral measure of $G$ by the spectral measure of a very large graph (around $10000$ vertices) built in the same way.
 Figure \ref{f:spectest}
 shows the empirical spectrum of the graph $G$ with respect to the
 sequence of subgraphs $(G_n)_{n \in \mathbb{N}}$.

%************FIXME : Include graphique spectrum !!!!!!!!!!!!!!!!!!!!!!!!!!!!!!!! ******************

\begin{figure}[htbp]
\caption{Empirical spectrum}
 \centering
 \includegraphics[bb=0 0 610 460,scale=0.5,keepaspectratio=true]{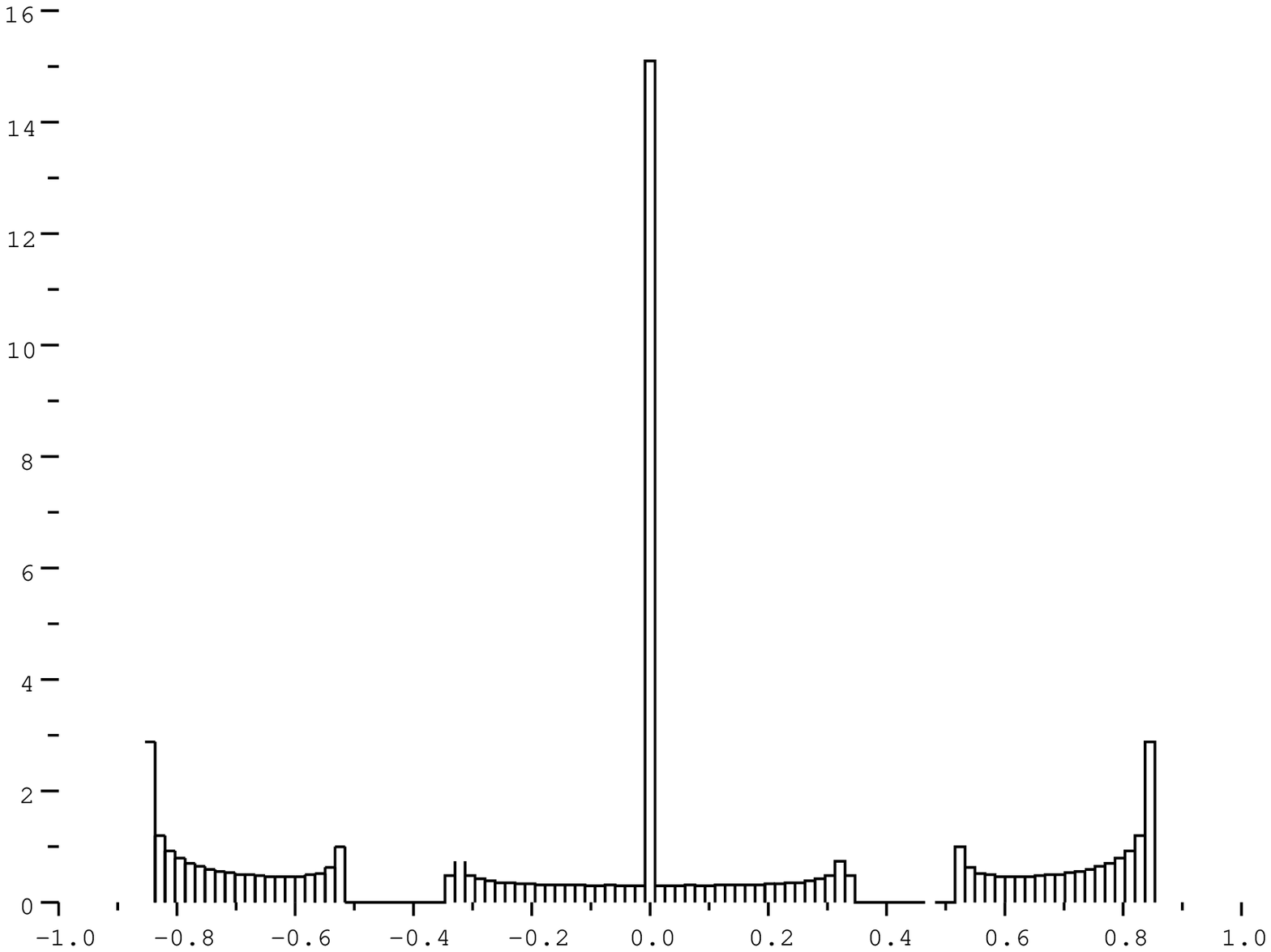}
 % graph_spec.eps: 0x0 pixel, 300dpi, 0.00x0.00 cm, bb=0 0 610 460
 \label{f:spectest}
\end{figure}

 To compute $\left(\mathcal{K}_n(f_\theta)\right)^{-1}$, we use the power series representation of $f_\theta$, and truncate
 this expression after the $15$ first coefficient.
This choice ensures that the simulation errors are neglectible with respect to the theoretical ones.

Figure \ref{densemp}
 gives the empirical distribution of 
$$\sqrt{m_n} \sqrt{\int_{\operatorname{Sp}(A)} \left(\frac{f'_\theta}{f_\theta}\right)^2}\left(\tilde{\theta}_n-\theta_0\right).$$

% We observe that given the size of the subgraphs chosen, the error is a little less concentrated than the asymptotic error (in red) which is a $\mathcal{N}(0,1)$.

\begin{figure}[htbp]
 \centering
\caption{Empirical distribution}
 \includegraphics[scale=0.5,keepaspectratio=true]{./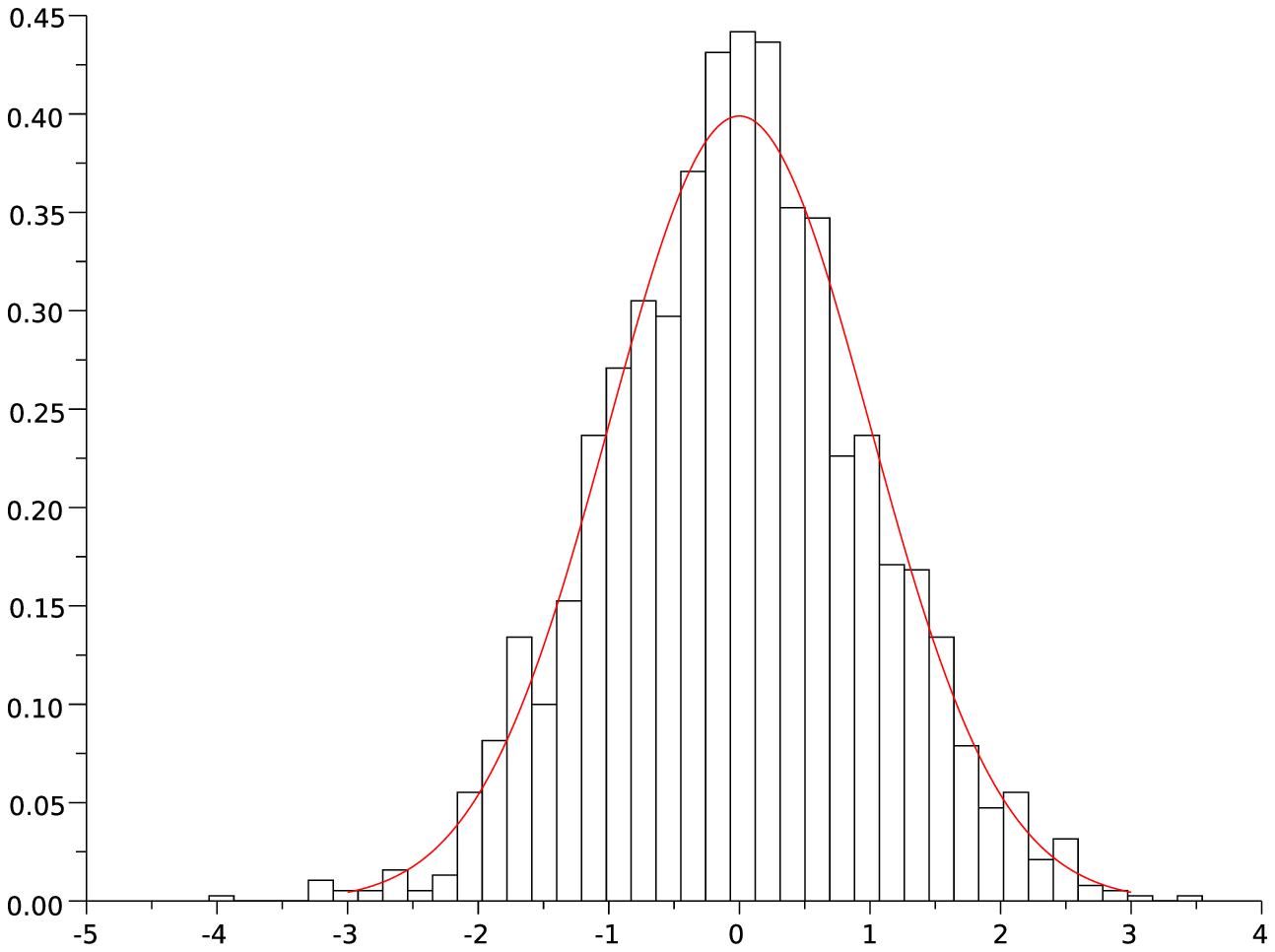}
%  \includegraphics[bb=0 0 610 500,scale=0.5,keepaspectratio=true]{dens_norm_3.eps}
 % 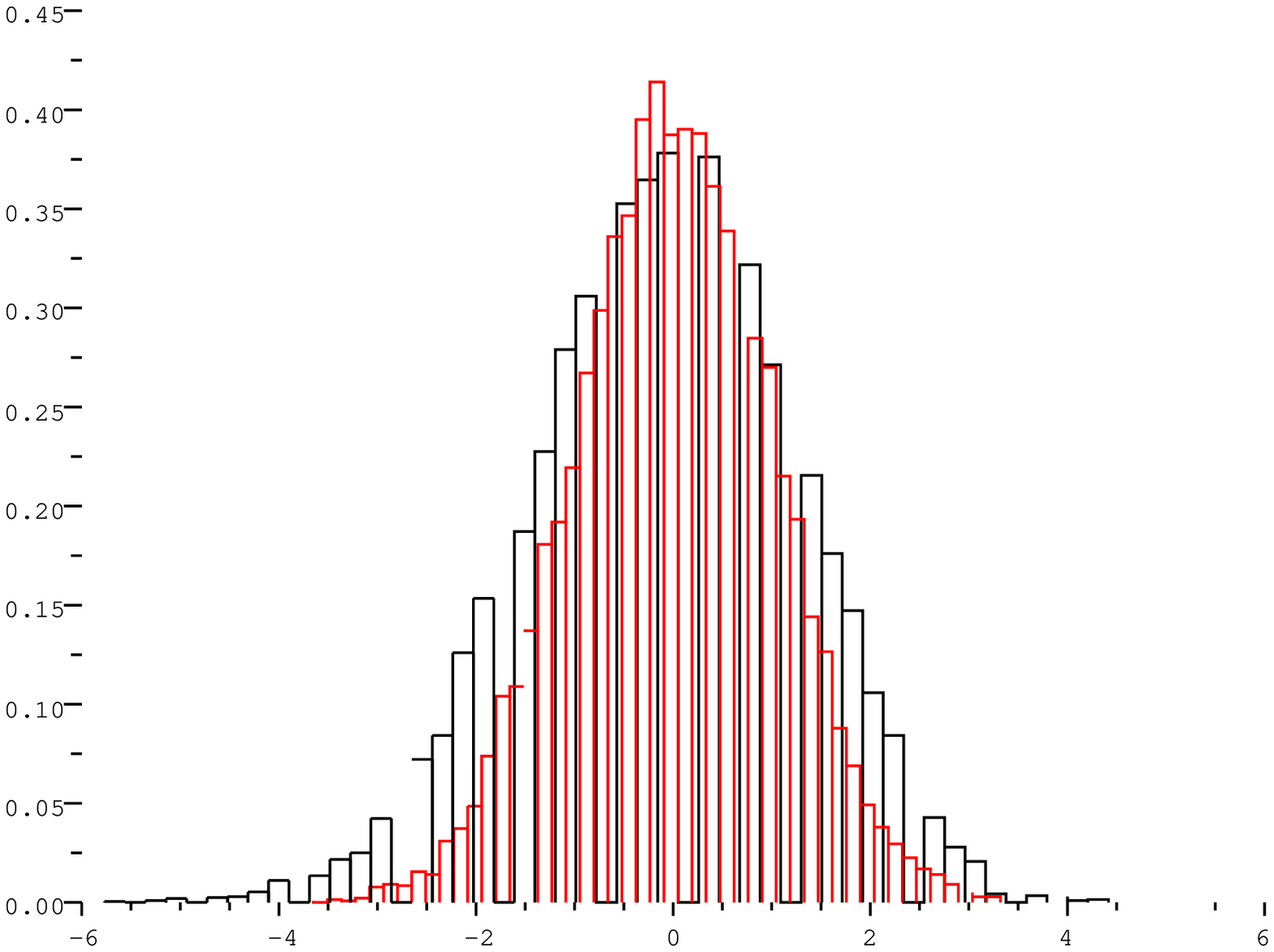: 0x0 pixel, 300dpi, 0.00x0.00 cm, bb=0 0 610 460
 \label{densemp}
\end{figure}

 \newpage
\section{Appendix}\label{app}
\subsection{Szegö's Lemmas}\label{s:szego}

Szegö's Lemmas \cite{sego} are useful in time series analysis. Indeed, they provide good approximations for the likelihood. As explained in Section \ref{s:main_thm}, 
these approximations of the likelihood are easier to compute.

In this section, we generalize a weak version of the Szeg\"{o} Lemmas, for a general graph, 
under Assumption \ref{hyp_graph_1} (non expansion criterion for $G_n$), and Assumption \ref{hyp_graph_2} (existence of the spectral measure $\mu$).

For any matrix $(B_{ij})_{i,j \in G_n}$, we define the block norm 
 $$b_N(B) = \frac{1}{\delta_N} \sum_{i,j \in G_N} \left| B_{ij} \right|.$$

We can state the equivalent version of the first Szeg\"{o} lemma for time-series 

\begin{lem}{Asymptotic homomorphism}\label{lem_hom}

 Let $k,n$ be positive integers, and let $g_1, \cdots, g_k$ be analytic functions over $\left[-1 ,1 \right]$ having finite regularity factors (i.e. $\alpha(g_i) < +\infty, i = 1, \cdots, k$).
Then,
$$b_n\left(\mathcal{K}_{n}(g_1)\cdots \mathcal{K}_{n}(g_k)- \mathcal{K}_{n}(g_1\cdots g_k)\right) \leq \frac{k-1}{2}\alpha(g_1)\cdots \alpha(g_k) .$$
\end{lem}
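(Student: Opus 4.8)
The plan is to reduce the multi-factor statement to the two-factor case and then prove the two-factor bound by a direct computation on the kernel entries, exploiting the power-series expansion of the $g_i$ together with the combinatorial meaning of $(W^k)_{ij}$ as a weighted count of paths of length $k$. First I would observe that $\mathcal{K}_n(g)$ is obtained by truncating $\mathcal{K}(g) = g(W) = \sum_k g_k W^k$ to the index set $G_n$, so that for $i,j \in G_n$ the discrepancy between $\mathcal{K}_n(g_1)\mathcal{K}_n(g_2)$ and $\mathcal{K}_n(g_1 g_2)$ comes entirely from paths that leave $G_n$: writing things out,
\begin{equation*}
\left(\mathcal{K}_n(g_1)\mathcal{K}_n(g_2) - \mathcal{K}_n(g_1 g_2)\right)_{ij} = -\sum_{l \in G \setminus G_n} \mathcal{K}(g_1)_{il}\,\mathcal{K}(g_2)_{lj}.
\end{equation*}
A nonzero term requires a weighted path from $i$ to $l$ and from $l$ to $j$ with $l \notin G_n$; hence $i$ must be connected to the exterior through a path, and one can bound $b_n$ of the left side by controlling, for each exterior-adjacent vertex, the total mass $\sum_k |(g_1)_k| \,|(W^k)_{i\cdot}|$ summed over endpoints. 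The role of the factor $(k+1)$ in $\alpha(g)$ is exactly to absorb the count of how many ways a path of length $k$ can first exit $G_n$; this is the standard device from \cite{AzDa} transplanted to the graph, and $\delta_n$ in the denominator of $b_n$ matches the number of boundary vertices.

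For the two-factor estimate I would argue as follows: expand $\mathcal{K}(g_i) = \sum_{k} (g_i)_k W^k$, bound $|\mathcal{K}(g_i)_{il}| \le \sum_k |(g_i)_k|\,|(W^k)_{il}|$, use $\|W\|_{2,op}\le 1$ and positivity of the path counts to get $\sum_{l} |(W^k)_{il}| \le$ (something controlled, using renormalization of $W$ and bounded degree), and crucially account for the $(k+1)$ factor by splitting a path of length $k$ from $i$ through the boundary at the first exit time. Summing $|(\mathcal{K}(g_1)\mathcal{K}(g_2))_{ij}|$ over $i,j \in G_n$ with at least one exterior excursion produces a bound of the form $\tfrac12 \alpha(g_1)\alpha(g_2)\,\delta_n$, and dividing by $\delta_n$ gives $b_n \le \tfrac12\alpha(g_1)\alpha(g_2)$. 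Then the general $k$-factor case follows by a telescoping induction: write
\begin{equation*}
\mathcal{K}_n(g_1)\cdots\mathcal{K}_n(g_k) - \mathcal{K}_n(g_1\cdots g_k) = \sum_{j=1}^{k-1} \mathcal{K}_n(g_1)\cdots\mathcal{K}_n(g_{j-1})\big(\mathcal{K}_n(g_j)\mathcal{K}_n(g_{j+1}\cdots g_k) - \mathcal{K}_n(g_j g_{j+1}\cdots g_k)\big),
\end{equation*}
apply the two-factor bound to each summand (noting $\alpha$ is submultiplicative, $\alpha(gh) \le \alpha(g)\alpha(h)$, which handles the grouped tail $g_{j+1}\cdots g_k$), and check that $b_n$ is compatible with left-multiplication by the bounded operators $\mathcal{K}_n(g_1)\cdots\mathcal{K}_n(g_{j-1})$ whose operator norms are each $\le \alpha(g_i)$. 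The $k-1$ summands each contribute $\tfrac12$ times a product of $\alpha$'s, yielding the stated $\tfrac{k-1}{2}\alpha(g_1)\cdots\alpha(g_k)$.

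The main obstacle I anticipate is the bookkeeping in the two-factor step: showing that summing the path weights over all couples $(i,j)\in G_n^2$ that "see" the boundary really is controlled by $\delta_n$ times $\alpha(g_1)\alpha(g_2)$, rather than by $m_n$ times that quantity. This is where the first-exit decomposition of a path and the $(k+1)$ weight must be used carefully: each length-$k$ path leaving $G_n$ passes through a boundary vertex, and one pays a factor $k$ (absorbed into $k+1$) for the position of that passage, while the remaining path-weight sums telescope against $\|W\|_{2,op}\le 1$. A secondary technical point is verifying that the block norm $b_n$ interacts correctly with operator multiplication (i.e. $b_n(AB) \le \|A\|_{2,op}\, b_n(B)$ in the relevant sense, or an $\ell^1$--$\ell^\infty$ variant of it), which is needed to run the telescoping induction cleanly; this should follow from Cauchy–Schwarz together with the bounded-degree hypothesis, but it requires care to state in a form that does not lose the $\delta_n$ normalization.
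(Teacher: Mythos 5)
Your plan is correct and follows essentially the same route as the paper: the two‑factor discrepancy is exactly the sum over intermediate vertices $l\in G\setminus G_n$, bounded via the first‑exit (porosity) factor $\Delta_h\le h+1$ together with a symmetrization that produces the $\tfrac12$, and the general case follows from your telescoping sum, which is just the unrolled form of the paper's induction using $\alpha(fg)\le\alpha(f)\alpha(g)$ and $b_n(BC)\le\|B\|_{\infty,in}\,b_n(C)$. The only caveat is that the norm controlling the path‑weight sums is the $\ell^1$ row/column norm $\|W\|_{\infty,in}=\sup_k\sum_i|W_{ik}|\le 1$ (from the degree renormalization), not $\|W\|_{2,op}$ nor Cauchy–Schwarz — precisely the $\ell^1$–$\ell^\infty$ variant you anticipated.
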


\begin{cor} \label{c:det} For any $g \in \mathcal{F}_\rho$ (see the first page of Subsection \ref{ss:framework} for the definition), and under Assumptions \ref{hyp_graph_1} and \ref{hyp_graph_2},
$$\frac{1}{m_n}\log \det(\mathcal{K}_n(g)) \underset{n \rightarrow \infty}{\rightarrow} \int \log(g) \mathrm{d}\mu.$$ 
\end{cor}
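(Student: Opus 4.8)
The plan is to deduce the convergence of $\frac{1}{m_n}\log\det(\mathcal{K}_n(g))$ from Lemma \ref{lem_hom} (asymptotic homomorphism) together with Assumption \ref{hyp_graph_2} (existence of the spectral measure $\mu$) and Assumption \ref{hyp_graph_1} ($\delta_n = o(m_n)$). The key observation is that $\log\det(\mathcal{K}_n(g)) = \operatorname{Tr}\big(\log(\mathcal{K}_n(g))\big)$, and since $g\in\mathcal F_\rho$ the function $\log g$ is analytic on the convex hull of $\operatorname{Sp}(W)$ with $\alpha(\log g)\le\rho$; heuristically we expect $\log(\mathcal{K}_n(g))\approx \mathcal{K}_n(\log g)$, so that
$$\frac{1}{m_n}\log\det(\mathcal{K}_n(g)) \approx \frac{1}{m_n}\operatorname{Tr}\big(\mathcal{K}_n(\log g)\big) = \frac{1}{m_n}\sum_{i\in G_n}(\log g)(W)_{ii} = \int (\log g)\,\mathrm{d}\mu_n^{[2]} \longrightarrow \int \log g\,\mathrm{d}\mu$$
by Assumption \ref{hyp_graph_2}. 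So the whole content is to make the approximation $\log(\mathcal{K}_n(g))\approx\mathcal{K}_n(\log g)$ precise at the level of normalized traces.

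First I would handle the polynomial case as a warm-up: if $g = e^Q$ for a polynomial $Q$ (so $\log g = Q$, a finite sum), then $\mathcal{K}_n(g)$ and $\mathcal{K}_n(Q)$ are exact functions of $W_{G_n}$ only up to edge terms, and one compares $\frac1{m_n}\operatorname{Tr}\log(\mathcal{K}_n(e^Q))$ with $\frac1{m_n}\operatorname{Tr}(\mathcal{K}_n(Q))$. Next, for general $g\in\mathcal F_\rho$, write $\log g = \sum_k c_k x^k$ with $\sum_k |c_k|(k+1)\le\rho$, and exploit that both $\mathcal{K}_n(\cdot)$ and $\log\det = \operatorname{Tr}\log$ behave well under the power series: expand $\log(\mathcal{K}_n(g))$ using $g = \exp(\log g)$ and the identity $\mathcal{K}_n(g) = \mathcal{K}_n(\exp(\log g))$, and repeatedly apply Lemma \ref{lem_hom} to replace products $\mathcal{K}_n(g_1)\cdots\mathcal{K}_n(g_k)$ by $\mathcal{K}_n(g_1\cdots g_k)$ at a cost controlled by $\frac{k-1}{2}\prod\alpha(g_i)$ in the block norm $b_n$. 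The crucial point is that the block norm $b_n(B) = \frac{1}{\delta_n}\sum_{i,j\in G_n}|B_{ij}|$ controls $\frac{1}{\delta_n}|\operatorname{Tr}(B)|$, and since $\delta_n = o(m_n)$, any error which is $O(\delta_n)$ in absolute trace becomes $o(1)$ after dividing by $m_n$. One must check the series of such corrections converges: the bounds $\alpha(g)\le e^\rho$, $\alpha(1/g)\le e^\rho$ quoted just after the definition of $\mathcal F_\rho$, and the factor $\alpha(\log g)\le\rho$, should make the relevant sums (with combinatorial weights $(k-1)/2$) summable, in the same way the fundamental Szegő inequality $\sum|(\mathcal{T}_N(f)\mathcal{T}_N(g)-\mathcal{T}_N(fg))_{ij}|\le\frac12\sum(k+1)f_k\sum(k+1)g_k$ is used for time series.

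I expect the main obstacle to be bookkeeping the approximation error when one passes from a \emph{finite} product of $\mathcal{K}_n$'s (where Lemma \ref{lem_hom} applies directly) to the \emph{infinite} series defining $\log(\mathcal{K}_n(g))$ in terms of $\mathcal{K}_n(g) - I$ (or, dually, $\mathcal{K}_n(g)$ in terms of $\exp\mathcal{K}_n(\log g)$): one needs a uniform-in-$n$ tail estimate so that the truncation of the series contributes $o(m_n)$ to the log-determinant, and this requires uniform operator bounds $\|\mathcal{K}_n(g)\|_{2,op}$ and $\|\mathcal{K}_n(g)^{-1}\|_{2,op}$ bounded away from $0$ and $\infty$ (which follow from $g$ being a positive analytic function on the compact $\operatorname{Sp}(W)$, hence bounded below and above). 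Once these uniform spectral bounds are in place, the block-norm estimates from Lemma \ref{lem_hom} telescope and, after dividing by $m_n$ and using $\delta_n/m_n\to 0$, everything collapses to $\int\log g\,\mathrm{d}\mu_n^{[2]}\to\int\log g\,\mathrm{d}\mu$, which is exactly Assumption \ref{hyp_graph_2} (equivalently the convergence of $\mu_n^{[1]}$, via the already-established equivalence in Section \ref{s:main_thm}).
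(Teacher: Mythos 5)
Your overall strategy --- reduce $\frac{1}{m_n}\log\det(\mathcal{K}_n(g))$ to $\frac{1}{m_n}\operatorname{Tr}(\mathcal{K}_n(\log g))$ and then invoke Assumption \ref{hyp_graph_2} --- is the right target, and you correctly isolate where the difficulty sits. But the step you defer (``the block-norm estimates from Lemma \ref{lem_hom} telescope'') is precisely where the plan breaks as stated. If you expand $\log(\mathcal{K}_n(g))$ as $\sum_j \frac{(-1)^{j+1}}{j}(\mathcal{K}_n(g/c)-I)^j$ (after normalizing by a constant $c$ so the series converges in operator norm), the correction from Lemma \ref{lem_hom} at order $j$ is of size $\frac{j-1}{2}\alpha(h)^j$ with $h=g/c-1$, and the sum over $j$ converges only if $\alpha(h)<1$. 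The uniform spectral bounds $e^{-\rho}\le g\le e^\rho$ that you invoke control the \emph{sup norm} of $g$ on $\operatorname{Sp}(W)$, hence the location of the eigenvalues of $\mathcal{K}_n(g)$, but they do \emph{not} control the coefficient-weighted norm $\alpha(h)=\sum_k|h_k|(k+1)$, which is what enters Lemma \ref{lem_hom}; in general $\alpha(g/c-1)\ge 1$ and the series of corrections diverges. The dual route you mention, expanding $\mathcal{K}_n(\exp(\log g))$ via the exponential series, does give a summable correction (the $1/j!$ beats $\frac{j-1}{2}\rho^j$), so you get $\mathcal{K}_n(g)=\exp(\mathcal{K}_n(\log g))+E_n$ with $\sum_{ij}|(E_n)_{ij}|=O(\delta_n)$; but converting an entrywise-$\ell^1$ (hence trace-norm) bound on $E_n$ into a bound on $\log\det(\mathcal{K}_n(g))-\operatorname{Tr}(\mathcal{K}_n(\log g))$ is itself nontrivial, because $E_n$ is small in trace norm but not in operator norm, so the naive expansion of $\log\det(I+A^{-1}E_n)$ need not converge either. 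Neither difficulty is addressed by the estimates you list.

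The paper avoids all infinite resummation of operator series by a moment argument: for each \emph{fixed} integer $k$, Lemma \ref{lem_hom} with $g_1=\dots=g_k=g$ gives $\frac{1}{m_n}\bigl|\operatorname{Tr}\bigl(\mathcal{K}_n(g)^k-\mathcal{K}_n(g^k)\bigr)\bigr|\le \frac{\delta_n}{m_n}\cdot\frac{k-1}{2}\alpha(g)^k\to 0$ by Assumption \ref{hyp_graph_1}, while $\frac{1}{m_n}\operatorname{Tr}(\mathcal{K}_n(g^k))\to\int g^k\,\mathrm{d}\mu$ by Assumption \ref{hyp_graph_2}. Thus every moment of the empirical spectral distribution of $\mathcal{K}_n(g)$ converges to the corresponding moment of the push-forward of $\mu$ by $g$; since all these measures are supported in the fixed compact interval $[e^{-\rho},e^{\rho}]$ (positivity of $g$ plus the compression argument you allude to), moment convergence yields weak convergence, and since $x\mapsto\log x$ is continuous and bounded on that interval, $\frac{1}{m_n}\log\det(\mathcal{K}_n(g))=\int\log x\,\mathrm{d}(\mathrm{ESD}_n)\to\int\log g\,\mathrm{d}\mu$. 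If you want to salvage your plan, this is the repair: apply Lemma \ref{lem_hom} only to integer powers of the single function $g$, and let the determinacy of the moment problem on a compact support do the work of the operator logarithm.
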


\begin{proof} of Lemma \ref{lem_hom}
This proof follows again the one of \cite{AzDa}. 
We will prove the result by induction on $k$. 

First we deal with the case $k = 2$. 
Let $f$ and $g$ analytic functions over $\left[-1 ,1 \right]$ such that $\alpha(f) < + \infty$ and $\alpha(g) <+\infty$.
We write \begin{eqnarray*}& & b_n(\mathcal{K}_{n}(f) \mathcal{K}_{n}(g)- \mathcal{K}_{n}(f g)) 
 \\          &=& \frac{1}{\delta_n} \sum_{i,j \in G_n} \left| \sum_{k\in G_n}\left(\mathcal{K}_{n}(f)\right)_{ik} \left(\mathcal{K}_{n}(g)\right)_{kj} - \sum_{k\in G}\left(\mathcal{K}_{n}(f)\right)_{ik} \left(\mathcal{K}_{n}(g)\right)_{kj}     \right|
\\& =& \frac{1}{\delta_n} \sum_{i,j \in G_n} 
\sum_{k \in G \backslash G_n}\left|\mathcal{K}(f)_{ik} \right|\left|\mathcal{K}(g)_{kj} \right|.
         \end{eqnarray*}

Using $\mathcal{K}(g) = \sum_{h = 0}^\infty g_h W^h $, Fubini's theorem gives, since all the previous sequences are in $l^1(G)$, 
\begin{eqnarray*}
 & & b_n(\mathcal{K}_{n}(f) \mathcal{K}_{n}(g)- \mathcal{K}_{n}(f g)) 
\\ & \leq & \frac{1}{\delta_n} \sum_{i,j \in G_n} \sum_{k\in G \backslash G_n} \left| \left(\mathcal{K}_{n}(f)\right)_{ik} \left(\mathcal{K}_{n}(g)\right)_{kj} 
\right|
\\& \leq &  \left( \sup_{k \in G \backslash G_n} 
\sum_{i \in G_n} \left|\mathcal{K}(f)_{ik} \right| \right) \times
 \frac{1}{\delta_n}\sum_{k \in G \backslash G_n} \sum_{j \in G_n} \sum_{h = 0}^\infty \left| g_h \right| \left| (W^h)_{kj} \right|
\\ &\leq & \left( \sup_{k \in G } \sum_{i \in G} \left|\mathcal{K}(f)_{ik} \right| \right) \times 
  \sum_{h = 0}^\infty \left| g_h \right|  \frac{1}{\delta_n} \sum_{k \in G \backslash G_n} \sum_{j \in G_n}\left| (W^h)_{kj} \right|.
\end{eqnarray*}

Introducing 
 $$\Delta_h = \sup_{N \in \mathbb{N}} \frac{1}{\delta_N} \sum_{k \in G \backslash G_N} 
\sum_{j \in G_N} \left|\left( W^{h}\right)_{kj}\right|, $$
we get 
$$ b_n(\mathcal{K}_{n}(f) \mathcal{K}_{n}(g)- \mathcal{K}_{n}(f g)) \leq \sup_{k \in G } \sum_{i \in G} \left|\mathcal{K}(f)_{ik} \right|
  \sum_{h = 0}^\infty \left| g_h \right|  \Delta_h.$$

The coefficient $\Delta_h$ is a porosity factor. It measures the weight of the paths of length $h$ going from the interior of $G_n$ to outside.

Note that $\Delta_h \leq h+1$, so we get $$ \sum_{h = 0}^\infty \left| g_h \right|  \Delta_h \leq \alpha(g).$$

Now, we define another norm on $B_G$ :
$$ \left\| B\right\|_{\infty,in}  :=  \sup_{k \in G } \sum_{i \in G} \left| B_{ik} \right|,\left( B \in B_G\right).$$
We thus obtain
\begin{eqnarray*}
\left\| \mathcal{K}(f)\right\|_{\infty,in} & = & \sup_{k \in G } \sum_{i \in G} \left|\mathcal{K}(f)_{ik} \right| 
\\ & \leq & \sum_{h=0}^\infty \left| f_h \right| \left\| W^h\right\|_{\infty,in} 
\\ &\leq & \sum_{h=0}^\infty \left| f_h \right| \left\| W\right\|_{\infty,in}^h
\\ & \leq & \sum_{h=0}^\infty \left| f_h \right| := \left\| f \right\|_{1,pol}.
\end{eqnarray*}

Finally, we get 

$$b_n(\mathcal{K}_{G_n}(f) \mathcal{K}_{G_n}(g)- \mathcal{K}_{G_n}(f g)) \leq \left\| f \right\|_{1,pol} \alpha(g).$$

% Note that this inequality is tighter than the one of Lemma \ref{lem_hom}, and may be useful. 

To conclude the proof of the lemma, by symmetrization of the last inequality, and since $1 \leq (h+1)$, we have,

\begin{equation}
b_n\left(\mathcal{K}_{n}(f) \mathcal{K}_{n}(g)- \mathcal{K}_{n}(f g)\right) \leq \frac{1}{2}\alpha(f) \alpha(g).
\end{equation}

To perform the inductive step, we need the following inequalities \cite{matrixhandbook}: 
\begin{eqnarray*}
 \alpha(f g) &\leq& \alpha(f) \alpha(g),
\\ b_n(BC) &\leq &\left\|B\right\|_{\infty,in} b_n(C)  ,
\\ b_n(B+C) &\leq& b_n(B)+b_n(C) ,
\\ \left\|\mathcal{K}_{n}(f)\right\|_{\infty,in}& =& \left\|f\right\|_{1,pol} \leq \alpha(f).
\end{eqnarray*}

Let $k>1$, and assume that for all $j\leq k-1$, Lemma \ref{lem_hom} holds.
Under the previous assumptions, and the inductive hypothesis for $k-1$ we get, 
\begin{eqnarray*}
 b_n\left(\mathcal{K}_{n}(g_1)\times \right. &\cdots & \times \mathcal{K}_{n}(g_k)  -\left. \mathcal{K}_{n}(g_1\cdots g_k)\right) 
\\ &\leq& 
\left\| \mathcal{K}_{n}(g_1) \right\|_{\infty,in} b_n\left(\mathcal{K}_{n}(g_2) \cdots \mathcal{K}_{n}(g_k) - \mathcal{K}_{n}(g_2\cdots g_k) \right)
\\ & &+ b_n\left(\mathcal{K}_{n}(g_1)\mathcal{K}_{n}(g_2\cdots g_k)- \mathcal{K}_{n}(g_1\cdots g_k)\right)
\\ & \leq &  \alpha(g_1)\frac{k-2}{2}\alpha(g_2) \cdots \alpha(g_k) +  \frac{1}{2} \alpha(g_1)\alpha(g_2 \cdots g_k)
\\ & \leq & \frac{k-1}{2}\alpha(g_1) \cdots \alpha(g_k),
\end{eqnarray*}
which completes the induction step and proves the result.
\end{proof}

\begin{proof} of Corollary \ref{c:det}

Let $g\in \mathcal{F}_\rho$, and $k$ be a positive integer. Using Lemma \ref{lem_hom}, we have

\begin{equation} \label{e:mom} \operatorname{Tr}\left( \mathcal{K}_{n}(g)^k  - \mathcal{K}_{n}(g^k)  \right) \leq \frac{\delta_n}{m_n}b_n\left(\mathcal{K}_{n}(g)^k  - \mathcal{K}_{n}(g^k)\right). \end{equation}

Thus, we have, thanks to Assumption \ref{hyp_graph_1}
$$\frac{1}{m_n}\operatorname{Tr}\left( \mathcal{K}_{n}(g)^k  - \mathcal{K}_{n}(g^k)  \right) \underset{n\rightarrow +\infty}{\rightarrow} 0.$$

Denote $\mu_g^{[1]}$ the real measure whose $k^\text{th}$-moment is given by 
$$\int x^k \mathrm{d}\mu_g^{[1]} = \lim_n \frac{1}{m_n}\operatorname{Tr}\left( \mathcal{K}_{n}(g)^k \right),$$ 
and
$\mu_g^{[2]}$ the real measure whose $k^\text{th}$-moment is given by 
$$\int x^k \mathrm{d}\mu_g^{[2]}=\lim_n \frac{1}{m_n}\operatorname{Tr}\left( \mathcal{K}_{n}(g^k) \right).$$

Notice that both of these measures have support between $\inf g \geq e^{-\rho}>0$ and $\sup g\leq e^\rho <+\infty$, since $\alpha(\log(g))<\rho$ (see Section \ref{s:main_thm}). Therefore, the equality of the moments given by Equation \ref{e:mom} gives the equality of the measures $\mu_g^{[1]}$ and $\mu_g^{[2]}$.

So that, we get
\begin{equation}
\frac{1}{m_n}\log \left( \operatorname{det}\left( \mathcal{K}_n(g) \right) \right) -\frac{1}{m_n}\operatorname{Tr}\left(\mathcal{K}_n\left(\log(g)\right)\right)\underset{n\rightarrow +\infty}{\rightarrow} 0. 
\end{equation}

Assumption \ref{hyp_graph_2} completes the proof of the Corollary since it implies that
$$\frac{1}{m_n}\operatorname{Tr}\left(\mathcal{K}_n\left(\log(g)\right)\right)\underset{n\rightarrow +\infty}{\rightarrow} \int \log(g) \mathrm{d}\mu .$$

\end{proof}

The following lemma enables to replace $\mathcal{K}_n(g)$ by the unbiased version $\mathcal{Q}_n(g)$ (see Section \ref{s:main_thm} for the definition).
\begin{lem}\label{lem_unbiased}
Under Assumptions \ref{hyp_graph_1},\ref{hyp_graph_2}, \ref{a:graph_structure} and \ref{hyp_unbiased}, and if $f$ or $g$ is a polynomial having degree less than or equal to $P$, we have
 $$\left| \frac{1}{m_n}\operatorname{Tr}\left( \left(\mathcal{K}_{n}(f)\mathcal{K}_{n}(g)\right)^p - \left(\mathcal{K}_{n}(f)\mathcal{Q}_{n}(g)\right)^p \right)\right| \leq 2^p u_n\alpha(f)^p\alpha(g)^p.$$
\end{lem}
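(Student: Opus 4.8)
The plan is to expand the difference of the two $p$-fold products telescopically, writing
$(\mathcal{K}_n(f)\mathcal{K}_n(g))^p - (\mathcal{K}_n(f)\mathcal{Q}_n(g))^p$
as a sum of $p$ terms, in each of which exactly one factor $\mathcal{Q}_n(g)$ has been replaced by $\mathcal{K}_n(g)$ (or vice versa), so that each term contains a factor of the form $\mathcal{K}_n(f)\bigl(\mathcal{K}_n(g) - \mathcal{Q}_n(g)\bigr)$. Since $\mathcal{Q}_n(g) = B^{(n)}\odot\mathcal{K}_n(g)$, the entries of $\mathcal{K}_n(g) - \mathcal{Q}_n(g)$ are $\bigl(1 - B^{(n)}_{ij}\bigr)\mathcal{K}_n(g)_{ij}$, which by Assumption \ref{hyp_unbiased} are bounded in absolute value by $u_n\bigl|\mathcal{K}_n(g)_{ij}\bigr|$; hence, entrywise, $\bigl|\mathcal{K}_n(g) - \mathcal{Q}_n(g)\bigr| \leq u_n\,|\mathcal{K}_n(g)|$. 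The hypothesis that $f$ or $g$ is a polynomial of degree $\leq P$ is what makes Assumption \ref{a:graph_structure} (finiteness of $V_P$) and the construction of $B^{(n)}$ applicable, so that the correction matrix $B^{(n)}$ is actually the relevant object; it also guarantees the matrices involved are genuinely banded, so there is no convergence issue in the finite-dimensional traces.

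The key estimates are then of the same flavour as in Lemma \ref{lem_hom}: use the submultiplicativity of $\|\cdot\|_{\infty,in}$ under matrix products, the bound $\|\mathcal{K}_n(f)\|_{\infty,in} = \|f\|_{1,pol} \leq \alpha(f)$ and likewise $\|\mathcal{K}_n(g)\|_{\infty,in}\leq\alpha(g)$, $\|\mathcal{Q}_n(g)\|_{\infty,in} \leq (1+u_n)\alpha(g)$, together with the inequality $\tfrac{1}{m_n}|\operatorname{Tr}(M)| \leq \|M\|_{\infty,in}$ valid for any $M\in B_{G_n}$ (since $|\operatorname{Tr}(M)| \leq \sum_i|M_{ii}| \leq m_n \sup_i\sum_k|M_{ik}|$; one may also route this through $b_n$ as in \eqref{e:mom}). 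Applying these to each of the $p$ telescoped terms: each term is a product of at most $2p$ factors, each of $\|\cdot\|_{\infty,in}$-norm at most $\max(\alpha(f),(1+u_n)\alpha(g),\alpha(g)) \leq 2\max(\alpha(f),\alpha(g))$ for $n$ large, and exactly one factor carries the extra $u_n$; after bounding $(1+u_n) \leq 2$ one obtains that each term contributes at most $u_n\,2^{p-1}\alpha(f)^p\alpha(g)^p$ (roughly — constants to be tracked), and summing the $p$ of them and absorbing $p$ into the power of $2$ yields the claimed bound $2^p u_n\alpha(f)^p\alpha(g)^p$.

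The main obstacle I expect is bookkeeping the constants so that the final bound is exactly $2^p u_n \alpha(f)^p\alpha(g)^p$ rather than something with a stray factor of $p$ or of $2$: one has to be a little careful about how many of the $2p$ factors in each telescoped term are $\mathcal{K}_n$'s versus $\mathcal{Q}_n$'s (only the latter, and only those to one side of the distinguished slot, carry $(1+u_n)$), and to use $p(1+u_n)^{p-1} \leq 2^p$ for $n$ large, or alternatively to bound each $(1+u_n)$-factor crudely by $2$ from the start and check the arithmetic of $p\cdot 2^{\,p-1} \leq 2^p\cdot(\text{something}\leq 1)$ — this last step needs $u_n$ small enough, which is fine since $u_n\to 0$, but one should note that the inequality is really being asserted for $n$ large (or with the convention that it is vacuous otherwise). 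A secondary, purely notational point: one must confirm that $\operatorname{Tr}((\mathcal{K}_n(f)\mathcal{Q}_n(g))^p)$ makes sense and the $\|\cdot\|_{\infty,in}$ machinery applies to $\mathcal{Q}_n(g)$, which it does because Hadamard-multiplying by a bounded matrix only scales rows/columns boundedly. None of these are serious; the structure of the argument is a routine telescoping estimate on top of Lemma \ref{lem_hom}'s toolbox.
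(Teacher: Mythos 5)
Your proposal is correct and, once the bookkeeping is carried out, reduces to exactly the same estimate as the paper's proof; only the decomposition differs. The paper expands both traces as sums over closed walks $i_0,i_1,\dots,i_{2p}=i_0$ and factors out, for each walk, the single multiplicative correction $\prod_l B^{(n)}_{i_{2l}i_{2l+1}}-1$, whose supremum is bounded by $(1+u_n)^p-1$, the remaining mass being controlled by $\alpha(f)^p\alpha(g)^p$ via the entrywise absolute values $f_{abs}$ and the $\left\|\cdot\right\|_{\infty,in}$ bounds --- i.e.\ the same toolbox you invoke. Your operator-level telescoping $A^p-B^p=\sum_{j=0}^{p-1}A^j(A-B)B^{p-1-j}$, with $\left\|A-B\right\|_{\infty,in}\le u_n\,\alpha(f)\alpha(g)$ and $\left\|B\right\|_{\infty,in}\le(1+u_n)\alpha(f)\alpha(g)$, yields $u_n\sum_{j=0}^{p-1}(1+u_n)^{j}\,\alpha(f)^p\alpha(g)^p=\bigl((1+u_n)^p-1\bigr)\alpha(f)^p\alpha(g)^p$, which is the identical quantity; so there is no stray factor of $p$ to absorb, and the geometric sum is at most $2^p-1$ as soon as $u_n\le 1$ --- the same (implicit) requirement as in the paper's chain $(1+u_n)^p-1\le u_n(2^p-1)$. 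The only step worth writing out explicitly is the one you flag, namely $\tfrac{1}{m_n}\left|\operatorname{Tr}(M_1\cdots M_r)\right|\le\prod_j\left\|M_j\right\|_{\infty,in}$, which follows by passing to entrywise absolute values (or by noting that $\left\|\cdot\right\|_{\infty,in}$ is an induced operator norm dominating the diagonal entries).
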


\begin{proof}
We define, for any $f$, $$f_{abs}(x) = \sum_k \left| f_k \right| x^k.$$

Actually, the proof is based of the following idea: as soon as $f$ or $g$ is a polynomial having 
degree less than or equal to $P$, we have to control only the number of paths of length less than or equal to $P$ (counted with their weights).

Let $p$ be a positive number.
Recall that $\mathcal{Q}_n(\frac{1}{g}) = B^{(n)} \odot \mathcal{K}_n(\frac{1}{g})$ (see Section \ref{s:main_thm}), we have,

{\begin{eqnarray*}
&\frac{1}{m_n}&\left| \operatorname{Tr}\left( \left(\mathcal{K}_{n}(f)\mathcal{K}_{n}(\frac{1}{g})\right)^p -\left(\mathcal{K}_{n}(f)\mathcal{Q}_{n}(\frac{1}{g})\right)^p  \right)\right| 
\\ &\leq &
\frac{1}{m_n}\left| \sum_{i \in G_n} \sum_{i_0 = i, i_1, \cdots, i_{2p} = i} 
\prod_{l = 0 \cdots p} B^{(n)}_{i_{2l}i_{2l+1}} \mathcal{K}_n(\frac{1}{g})_{i_{2l} i_{2l+1}} \mathcal{K}_n(f)_{i_{2l+1}i_{2l+2}} \right.
\\ & & - \left.\frac{1}{m_n}\sum_{i \in G_n} \sum_{i_0 = i, i_1, \cdots, i_{2p} = i} 
\prod_{l = 0 \cdots p} \mathcal{K}_n(\frac{1}{g})_{i_{2l} i_{2l+1}} \mathcal{K}_n(f)_{i_{2l+1}i_{2l+2}} \right|
\\ & \leq & \frac{1}{m_n} \sup_{i_1,i_2, \cdots, i_{2p+1}} \left|  \prod_{l = 0 \cdots p-1} B^{(n)}_{i_{2l+1}i_{2l+2}} -1 \right|       
\\ & & \times      \sum_{i \in G_n} \sum_{i_0 = i, i_1, \cdots, i_{2p} = i} 
\prod_{l = 0 \cdots p} \left| \mathcal{K}_n(\frac{1}{g})_{i_{2l} i_{2l+1}} \mathcal{K}_n(f)_{i_{2l+1}i_{2l+2}} \right|
\\ & \leq & \frac{1}{m_n} \sup_{i_1,i_2, \cdots, i_{2p+1}} \left|  \prod_{l = 0 \cdots p-1} B^{(n)}_{i_{2l+1}i_{2l+2}} -1 \right|    
\\ & & \times        \sum_{i \in G_n} \sum_{i_0 = i, i_1, \cdots, i_{2p} = i} 
\prod_{l = 0 \cdots p}  \mathcal{K}_n((\frac{1}{g})_{abs})_{i_{2l} i_{2l+1}} \mathcal{K}_n(f_{abs})_{i_{2l+1}i_{2l+2}} 
\\ & \leq & \sup_{i_1,i_2, \cdots, i_{2p+1}} \left|  \prod_{l = 0 \cdots p-1} B^{(n)}_{i_{2l+1}i_{2l+2}} -1 \right|      
\left\|  \left(  K_{G_n}(f_{abs})K_{G_n}((\frac{1}{g})_{abs})\right)^p \right\|_{2,in} 
\\ & \leq & \sup_{i_1,i_2, \cdots, i_{2p+1}} \left|  \prod_{l = 0 \cdots p-1} B^{(n)}_{i_{2l+1}i_{2l+2}} -1 \right|  
\alpha(f)^p \alpha(\frac{1}{g})^p.
\end{eqnarray*}

Using Assumption \ref{hyp_unbiased}, we get,
\begin{eqnarray*}
 &\frac{1}{m_n}&\left| \operatorname{Tr}\left( \left(\mathcal{K}_{n}(f)\mathcal{K}_{n}(\frac{1}{g})\right)^p -\left(\mathcal{K}_{n}(f)\mathcal{Q}_{n}(\frac{1}{g})\right)^p  \right)\right|
\\ & \leq &  \left|  (1+u_n)^p-1 \right|  
\alpha(f)^p \alpha(\frac{1}{g})^p
\\ & \leq &  \left|  \left(1+u_n-1\right)\left((1+u_n)^{p-1}+(1+u_n)^{p-2} +\cdots +1\right) \right|  
\alpha(f)^p \alpha(\frac{1}{g})^p
\\ & \leq &  \left|  u_n\left(2^p-1\right) \right|  
\alpha(f)^p \alpha(\frac{1}{g})^p
\\ & \leq & u_n 2^p\alpha(f)^p\alpha(\frac{1}{g})^p  .
\end{eqnarray*}}
This ends the proof of the Lemma.
\end{proof}
Finally, the following lemma explains the choice of $B^{(n)}$. The unbiased quadratic form $\mathcal{Q}_n$ is no more than a correction of the error
between $\mathcal{K}_n(f)\mathcal{K}_n(g)$ and $\mathcal{K}_n(fg)$.

\begin{lem}[Exact correction]
\label{l:correction}
Let $f,g \in \mathcal{F}_\rho$, and assume that either $f$ or $g$ is a polynomial of degree less than or equal to $P$ (see Section \ref{s:main_thm}). Then, the unbiased quadratic form $\mathcal{Q}_n(f_\theta)$ verify
$$\operatorname{Tr}\left(\mathcal{K}_n(f)\mathcal{Q}_n(g) \right) = \operatorname{Tr}\left(\mathcal{K}_n(fg) \right). $$
\end{lem}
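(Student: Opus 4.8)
The plan is to expand both traces as sums over pairs of vertices, to observe that the summand attached to a pair $(i,k)$ depends on the pair only through the local measure $\mu_{ik}$, and then to regroup the two sums according to the common value $v=\mu_{ik}\in V_P$; the matrix $B^{(n)}$ has been built precisely so that the two regrouped sums agree.

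First I would record that, from $\mathcal{K}(h)=h(W)=\sum_\ell h_\ell W^\ell$, $(W^\ell)_{ij}=\int_{\operatorname{Sp}(W)}\lambda^\ell\,\mathrm{d}\mu_{ij}$, $\sum_\ell|h_\ell|<\infty$ and the fact that each $\mu_{ij}$ is a finite signed measure, one has $\mathcal{K}(h)_{ij}=\int_{\operatorname{Sp}(W)}h(\lambda)\,\mathrm{d}\mu_{ij}(\lambda)$. In particular $\mathcal{K}(h)_{ij}$ depends on $(i,j)$ only through $\mu_{ij}$, and since each $E(\omega)$ is a self-adjoint projection, $\mu_{ij}=\mu_{ji}$. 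Setting $\Phi(v):=\bigl(\int f\,\mathrm{d}v\bigr)\bigl(\int g\,\mathrm{d}v\bigr)$ for a signed measure $v$, we therefore have $\mathcal{K}(f)_{ik}\mathcal{K}(g)_{ki}=\Phi(\mu_{ik})$ for all $i,k$. Since $\mathcal{K}_n(h)$ is the $G_n\times G_n$ submatrix of $h(W)$ and $\mathcal{Q}_n(g)=B^{(n)}\odot\mathcal{K}_n(g)$, expanding the traces gives
$$\operatorname{Tr}\bigl(\mathcal{K}_n(fg)\bigr)=\sum_{i\in G_n}\sum_{k\in G}\Phi(\mu_{ik}),\qquad \operatorname{Tr}\bigl(\mathcal{K}_n(f)\mathcal{Q}_n(g)\bigr)=\sum_{i\in G_n}\sum_{k\in G_n}B^{(n)}_{ki}\,\Phi(\mu_{ik}).$$

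Next I would use the hypothesis that $f$ or $g$ is a polynomial of degree at most $P$: then $\mathcal{K}(f)_{ik}$ (resp.\ $\mathcal{K}(g)_{ki}$) is a combination of the entries $(W^\ell)_{ik}$ with $\ell\le P$, hence vanishes whenever $d_{\mathbf{G}}(i,k)>P$ since there is then no path of length $\le P$ from $i$ to $k$. In either case $\Phi(\mu_{ik})=0$ as soon as $d_{\mathbf{G}}(i,k)>P$, so only pairs with $d_{\mathbf{G}}(i,k)\le P$, i.e.\ with $\mu_{ik}\in V_P$, contribute to either trace. Now fix $v\in V_P$ and write $N_{\mathrm{in}}(v)=\operatorname{Card}\{(i,k)\in G_n\times G_n:\mu_{ik}=v\}$ and $N_{\mathrm{out}}(v)=\operatorname{Card}\{(i,k)\in G_n\times G:\mu_{ik}=v\}$, so that $B^{(n)}_{ki}=N_{\mathrm{out}}(\mu_{ik})/N_{\mathrm{in}}(\mu_{ik})$ whenever $d_{\mathbf{G}}(i,k)\le P$. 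If $\Phi(v)\neq0$, then applying the vanishing above to a hypothetical pair at distance $>P$ carrying the measure $v$ would force $\Phi(v)=0$, a contradiction; hence every pair with $\mu_{ik}=v$ is at distance $\le P$. Consequently the class of pairs carrying the measure $v$ contributes $N_{\mathrm{out}}(v)\Phi(v)$ to $\operatorname{Tr}(\mathcal{K}_n(fg))$ and $N_{\mathrm{in}}(v)\cdot\frac{N_{\mathrm{out}}(v)}{N_{\mathrm{in}}(v)}\Phi(v)=N_{\mathrm{out}}(v)\Phi(v)$ to $\operatorname{Tr}(\mathcal{K}_n(f)\mathcal{Q}_n(g))$, while classes with $\Phi(v)=0$ contribute $0$ to both. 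Summing over $v\in V_P$ yields the claimed identity.

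The one point that needs care is this last step: a pair $(i,k)$ with $d_{\mathbf{G}}(i,k)>P$ might a priori carry a local measure coinciding with that of a close pair, and one must rule out that such a pair corrupts the count — which is exactly where one uses that $\Phi$ vanishes on every such measure. Everything else is a routine reindexing of finite sums, finite because $\mathbf{G}$ has bounded degree and, by Assumption~\ref{a:graph_structure}, $V_P$ is finite.
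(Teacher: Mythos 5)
Your proof is correct and follows essentially the same route as the paper's: express each entry as a function of the local measure $\mu_{ik}$, use that a degree-$\le P$ polynomial symbol kills all pairs at distance $>P$, regroup both traces over $v\in V_P$, and let the defining ratio in $B^{(n)}$ convert the interior count into the $G_n\times G$ count. Your explicit remark that a measure $v$ carried by some pair at distance $>P$ necessarily has $\Phi(v)=0$ (so such pairs cannot corrupt the cardinalities) is a point the paper's computation passes over silently, and it is a welcome clarification rather than a deviation.
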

\begin{proof} of Lemma \ref{l:correction}

First, notice that 
$$\operatorname{Tr}\left(\mathcal{K}_n(f)\mathcal{Q}_n(g) \right) = \sum_{i,j \in G_n}  \mathcal{K}_n(f)_{ij} \mathcal{K}_n(g)_{ij} B^{(n)}_{ij}.$$
Since this expression is symmetric on $f,g$, we can now consider the case where $f$ is a polynomial of degree less than or equal to $P$.

% Write, for $i,j,k,l ,t(i,j) = t(k,l)$, 
% $$K(f)_{ij} = \sum_{p \leq P} f_p \left(W^{p}\right)_{ij} = \sum_{p \leq P} f_p \left(W^{p}\right)_{kl} = K(f)_{kl}.$$
Actually, since $f$ is a polynomial, $\mathcal{K}_n(f)_{ij} = 0$ as soon as $d(i,j) > P$ ($i,j \in G$).
Then, if $i,j,k,l \in G$ are such that $\mu_{ij}= \mu_{kl}$, we have
$$\mathcal{K}_n(f)_{ij} \mathcal{K}_n(g)_{ij} = \mathcal{K}_n(f)_{kl} \mathcal{K}_n(g)_{kl}.$$ 
So that, we may here denote, for convenience, $K(f)_{\mu_{ij}}$.

Using Assumption \ref{a:graph_structure}, this leads to 
\begin{eqnarray*}
 \operatorname{Tr}\left(\mathcal{K}_n(f)\mathcal{Q}_n(g) \right) & = &\sum_{i,j \in G_n}  \mathcal{K}_n(f)_{ij} \mathcal{K}_n(g)_{ij} B^{(n)}_{ij}
\\ & =& \sum_{v \in V_P} \sum_{ i,j \in G_n \atop  \mu_{ij}=v,  d_{\mathbf{G}}(i,j) \leq P}  \mathcal{K}_n(f)_{v} \mathcal{K}_n(g)_{v} B^{(n)}_v
\\ & = & \sum_{v \in V_P} \mathcal{K}_n(f)_{v} \mathcal{K}_n(g)_{v}  \text{Card}\left\{(i,j) \in G_n\times G_n, \mu_{ij}= v \right\}  
\\ & & \times 
\frac{\text{Card}\left\{(i,j) \in G_n\times G, \mu_{ij}= v \right\}       }
{ \text{Card}\left\{(i,j) \in G_n\times G_n, \mu_{ij}= v \right\}  },
\\ & =&\sum_{v \in V_P} \sum_{(i,j) \in G_n \times G, \atop \mu_{ij}=v, d_{\mathbf{G}}(i,j) \leq P}  \mathcal{K}_n(f)_{v} \mathcal{K}_n(g)_{v} B^{(n)}_v
\\ & = &\sum_{(i,j) \in G_n \times G}  \mathcal{K}_n(f)_{ij} \mathcal{K}_n(g)_{ij} B^{(n)}_{ij}
\\ &= &\operatorname{Tr}\left(\mathcal{K}_n(fg) \right).
\end{eqnarray*}
That ends the proof of Lemma \ref{l:correction}.
%\input{About_the_assumptions_2010_10_26.tex}
% \newpage
% \input{Example_traffic_2010_10_19.tex}
\end{proof}

\subsection{Proofs of the lemmas of Theorem \ref{thm_conv}}
\label{ss:proof_lemmas_main_thm}
Recall that the theorem relies on two lemmas. Lemma \ref{lem_conv} states a condition on deterministic sequences to provide the convergence of the maximizer 
of these sequences.

\begin{proof} of Lemma \ref{lem_conv}
Recall that $f_{\theta_0}$ denotes the true spectral density.
Let $(\ell_n)_{n \in \mathbb{N}}$ be a deterministic sequence of continuous functions such that 
\begin{equation}
  \forall \theta \in \Theta, \ell_n(\theta_0) - \ell_n(\theta) \underset{n \rightarrow \infty}{\rightarrow}  \frac{1}{2}\int\left( -\log(\frac{f_{\theta_0}}{f_\theta}) - 1 + \frac{f_{\theta_0}}{f_\theta}\right) \mathrm{d} \mu. 
\end{equation}
 uniformly as $n$ tends to infinity.
Denotes moreover $\theta_n = \arg \max_\theta \ell_n(\theta)$.
We aim at proving that
$$\theta_n \underset{n \rightarrow \infty}{\rightarrow} \theta_0 .$$

Using the compactness of $\Theta$, let $\theta_\infty$ be an accumulation point of the sequence $(\theta_n)_{n \in \mathbb{N}}$, and $(\theta_{n_k})_{k \in \mathbb{N}}$ be a subsequence converging to $\theta_\infty$.
As the function $$\theta \mapsto \frac{1}{2}\int \left(-\log(\frac{f_{\theta_0}}{f_\theta}) - 1 + \frac{f_{\theta_0}}{f_\theta}\right) \mathrm{d} \mu $$ 
is continuous on $\Theta$,
and the convergence of $(\ell_n(\theta_0) - \ell_n(\theta))_{n \in \mathbb{N}}$ is uniform in $\theta$, we have
\begin{equation} \ell_{n_k}(\theta_0) - \ell_{n_k}( \theta_{n_k}) \xrightarrow{k \rightarrow \infty}  \frac{1}{2}\int -\log(\frac{f_{\theta_0}}{f_{\theta_\infty}}) - 1 + \frac{f_{\theta_0}}{f_{\theta_\infty}}
 \mathrm{d} \mu.
\end{equation} 
But we can notice that, thanks to the definition of $\theta_n$, $ \ell_{n_k}(\theta_0) - \ell_{n_k}( \theta_{n_k}) \leq 0 $
So, since the function $x \mapsto -\log(x)+x-1 $ is non negative and vanishes if, and only if, $x=1$, we get that $f_{\theta_0} = f_{\theta_{\infty}}$. 
By injectivity of the function $\theta \rightarrow f_\theta $, we get $\theta_\infty=\theta_0$, for any accumulation point
 $\theta_\infty$ of the sequence $(\theta_n)_{n \in \mathbb{N}}$, which ends the proof of this first lemma.
\end{proof}

Lemma \ref{lem_ik} provides the uniform convergence of the contrasts of maximum likelihood and approximated maximum likelihood to the Kullback information. The proof may be cut into several lemmas.
% 
% \begin{lem}
% Under Assumptions \ref{hyp convergence}, the asymptotic Kullback information exists and verify 
% $$IK(f,g) = \frac{1}{2}\int -\log(\frac{f}{g}) - 1 + \frac{f}{g} \mathrm{d} \mu  $$ 
% 
% 
% 
% Furthermore, if we denote $l_n(\theta,X) = \frac{1}{m_n}L_n(\theta,X)$, we have $P_{f_{\theta_0}}$-almost surely,
%  $$ l_n(\theta_0,X) - l_n( \theta,X) \rightarrow IK(f_{\theta_0},f_\theta)= \frac{1}{2}\int -\log(\frac{f_{\theta_0}}{f_\theta}) - 1 + \frac{f_{\theta_0}}{f_\theta} \mathrm{d} \mu$$
% uniformly as $n$ tends to infinity.
% This property is still true if we change $L_n$ into $\bar{L}_n$ or $\tilde{L}_n$.
% \end{lem}

\begin{proof} of Lemma \ref{lem_ik}

% Furthermore, if we set $l_n(\theta,X_n) = \frac{1}{m_n}L_n(\theta,X_n)$, we have that $P_{f_{\theta_0}}$-a.s.,
%  $$ l_n(\theta_0,X_n) - l_n( \theta,X_n) \underset{n \rightarrow \infty}{\rightarrow} \mathbb{IK}(f_{\theta_0},f_\theta) %= \frac{1}{2}\int -\log(\frac{f_{\theta_0}}{f_\theta}) - 1 + \frac{f_{\theta_0}}{f_\theta} \mathrm{d} \mu
% $$
% uniformly in $\theta \in \Theta$.
% 
% This property also holds for $\bar{l}_n := \frac{1}{m_n}\bar{L}_n$ and $\tilde{l}_n:=\tilde{L}_n$ 
% 
% Furthermore, for $P>0$, and for both the $AR_P$ or the $MA_P$ case (see above), this also holds for $l_n^{(u)} := \frac{1}{m_n}L^{(u)}_n$.
% \end{lem}

First, notice that by construction, we have, for any $\theta \in \Theta$,
\begin{equation}
\mathbb{IK}(f_{\theta_0},f_\theta) = \lim_n \mathbb{E} \left[\frac{1}{m_n} \left( L_n(f_{\theta_0},X_{n}) - L_n(f_\theta,X_{n})\right) \right] , 
\end{equation}
when it exists.
Then, we can compute 
\begin{eqnarray*}
 l_n(f_{\theta_0},X_n) - l_n(f_\theta,X_n) &=& -\frac{1}{2m_n} \left(\log \det(\mathcal{K}_{n}(f_{\theta_0})) - \log \det(\mathcal{K}_{n}(f_\theta))\right)
\\ & &  -\frac{1}{2m_n}\left(X_n^T\mathcal{K}_{n}(f_{\theta_0})^{-1}X_n-X_n^T\mathcal{K}_{n}(f_\theta)^{-1}X_n\right)
\end{eqnarray*}

Corollary \ref{c:det} of Lemma \ref{lem_hom} provides the following convergence
\begin{equation}
\frac{1}{m_n} \left(\log \det(\mathcal{K}_{n}(f_{\theta_0})) - \log \det(\mathcal{K}_{n}(f_\theta))\right)\underset{n \rightarrow \infty}{\rightarrow}\int \log\left(\frac{f_{\theta_0}}{f_\theta}\right)\mathrm{d}\mu. 
\end{equation}

To prove the existence of $\mathbb{IK}(f_{\theta_0},f_\theta)$, it only remains to prove the $\mathbb{P}_{f_{\theta_0}}$-a.s. convergence of $\frac{1}{m_n}X_n^T\mathcal{K}_{n}(f_\theta)^{-1}X_n$ to $\int\frac{f_{\theta_0}}{f_\theta} \mathrm{d} \mu $ as $n$ goes to infinity.

This is ensured by the following Lemma.
%  which provides also the convergence to the Kullback information for $\bar{l}_n$, $\tilde{l}_n$, and $l_n^{(u)}$ in the $AR_P$ or $MA_P$ cases (see \ref{s:main_thm}).

%\begin{lem}
%Under homogeneity assumption, we get that, in the sense of the weak convergence, 
%$$\mu_n \rightarrow \mu$$
%\end{lem}
% Let $f,g \in \mathcal{F}_\rho, \rho >0$, we will now provide several lemmas. Together, they will ensure, the desired convergences.
% 
% 
% \begin{lem}[Weak convergence of the measure]\label{lem_conv_measure}
%  For any $\phi$ continuous bounded function $\phi$ from $\text{Im}(g)$ to $\mathbb{R}$, we have
% $$\frac{1}{m_n}\left(Tr(\phi(K_n(g))) - K_n(\phi(g)) \right) \xrightarrow{ n \rightarrow \infty} 0 .$$
% \end{lem}
% 
% 
% \begin{cor}[Determinant lemma]\label{lem_det}
% It holds that
% $$\left|\frac{1}{m_n} \log( \det(K_{G_n}(g))) - \int \log(g)\mathrm{d}\mu \right| \underset{n \rightarrow \infty}{\rightarrow} 0. $$
% \end{cor}

\begin{lem}[Convergence lemma]\label{concentration}
For respectively $\Lambda = \mathcal{K}_{n}(\frac{1}{f_\theta})$, $\Lambda = (\mathcal{K}_{n}(f_\theta))^{-1}$ or $\Lambda = \mathcal{Q}_n(\frac{1}{f_\theta})$, we have,
$$\frac{1}{m_n} X_n^T\Lambda X_n \underset{n \rightarrow \infty}{\rightarrow} \int \frac{f_{\theta_0}}{f_\theta} \mathrm{d}\mu, \mathbb{P}_{f_{\theta_0}}-\text{a.s.}.$$
\end{lem}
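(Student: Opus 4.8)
\textbf{Proof plan for Lemma \ref{concentration}.}

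The plan is to treat this as a law-of-large-numbers statement for a quadratic form in a Gaussian vector, so I would separate the problem into an expectation (mean) computation and a concentration (fluctuation) argument. First I would compute $\mathbb{E}_{f_{\theta_0}}\left[\frac{1}{m_n} X_n^T \Lambda X_n\right] = \frac{1}{m_n}\operatorname{Tr}\left(\Lambda \,\mathcal{K}_n(f_{\theta_0})\right)$, since $X_n \sim \mathcal{N}(0,\mathcal{K}_n(f_{\theta_0}))$. For $\Lambda = \mathcal{K}_n(1/f_\theta)$ I would use Lemma \ref{lem_hom} (the asymptotic homomorphism) to replace $\mathcal{K}_n(1/f_\theta)\mathcal{K}_n(f_{\theta_0})$ by $\mathcal{K}_n(f_{\theta_0}/f_\theta)$ up to an error controlled in block norm $b_n$ by $\frac{1}{2}\alpha(1/f_\theta)\alpha(f_{\theta_0}) \le \frac{1}{2}e^{2\rho}$; since $\frac{1}{m_n}|\operatorname{Tr}(\cdot)| \le \frac{\delta_n}{m_n} b_n(\cdot)$ and $\delta_n = o(m_n)$ by Assumption \ref{hyp_graph_1}, this error vanishes, and then Corollary-style reasoning as in the proof of Corollary \ref{c:det} (equality of all moments of the two limiting measures $\mu^{[1]}_{f_{\theta_0}/f_\theta}$ and $\mu^{[2]}_{f_{\theta_0}/f_\theta}$, both supported in $[e^{-2\rho}, e^{2\rho}]$, hence determined by their moments) combined with Assumption \ref{hyp_graph_2} gives $\frac{1}{m_n}\operatorname{Tr}(\mathcal{K}_n(f_{\theta_0}/f_\theta)) \to \int f_{\theta_0}/f_\theta \,\mathrm{d}\mu$. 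For $\Lambda = (\mathcal{K}_n(f_\theta))^{-1}$ the argument is similar once one notes $(\mathcal{K}_n(f_\theta))^{-1}$ is close to $\mathcal{K}_n(1/f_\theta)$: indeed $\mathcal{K}_n(f_\theta)\mathcal{K}_n(1/f_\theta) - \operatorname{Id} = \mathcal{K}_n(f_\theta)\mathcal{K}_n(1/f_\theta) - \mathcal{K}_n(1)$ is small in $b_n$ by Lemma \ref{lem_hom}, and since $\|\mathcal{K}_n(f_\theta)^{-1}\|_{2,op} \le e^\rho$ is bounded, one transfers the estimate. For $\Lambda = \mathcal{Q}_n(1/f_\theta)$, in the $AR_P$ or $MA_P$ setting, Lemma \ref{l:correction} gives \emph{exactly} $\operatorname{Tr}(\mathcal{K}_n(f_{\theta_0})\mathcal{Q}_n(1/f_\theta)) = \operatorname{Tr}(\mathcal{K}_n(f_{\theta_0}/f_\theta))$ when $1/f_\theta$ (or $f_{\theta_0}$) is a polynomial of degree $\le P$, so the mean computation is immediate; otherwise Lemma \ref{lem_unbiased} with $p=1$ bounds the discrepancy by $2u_n\alpha(f_{\theta_0})\alpha(1/f_\theta) \to 0$.

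For the fluctuation part, I would write $\frac{1}{m_n}X_n^T\Lambda X_n - \mathbb{E}[\cdot] = \frac{1}{m_n}(X_n^T \Lambda X_n - \operatorname{Tr}(\Lambda \mathcal{K}_n(f_{\theta_0})))$ and use the standard fact that for a Gaussian quadratic form the variance equals $\frac{2}{m_n^2}\operatorname{Tr}\left((\Lambda \mathcal{K}_n(f_{\theta_0}))^2\right)$ (after symmetrising $\Lambda$). Since all the operators involved have operator norm bounded uniformly in $n$ (by $e^\rho$ or $e^{2\rho}$, using $\alpha(\log f_\theta)\le\rho$, plus $\sup_{ij}|B^{(n)}_{ij}| \le 1+u_n$ bounded for the $\mathcal{Q}_n$ case), we get $\operatorname{Tr}((\Lambda\mathcal{K}_n(f_{\theta_0}))^2) \le C\, m_n$, hence the variance is $O(1/m_n) \to 0$. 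To upgrade $L^2$ convergence to almost-sure convergence along the full sequence, I would either invoke a Gaussian concentration inequality (Hanson–Wright / the Gaussian isoperimetric bound for Lipschitz-plus-quadratic functionals) to get an exponential tail $\mathbb{P}(|\tfrac{1}{m_n}X_n^T\Lambda X_n - \mathbb{E}| > \varepsilon) \le 2\exp(-c m_n \varepsilon^2 / \|\Lambda\|^2)$ and then apply Borel–Cantelli (using $m_n \to \infty$, indeed $m_n$ grows at least linearly in a neighbourhood-exhaustion, so $\sum_n e^{-cm_n\varepsilon^2} < \infty$), or mimic the martingale/fourth-moment argument of \cite{AzDa}. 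The exponential-tail route is cleanest and is exactly the graph analogue of what is done for time series.

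The main obstacle I anticipate is not any single estimate but the bookkeeping required to keep every operator norm bounded uniformly in $n$ \emph{and} to ensure the error terms genuinely go to zero: the block-norm estimates from Lemma \ref{lem_hom} only become $o(1)$ after multiplication by $\delta_n/m_n$, so one must be careful to always pass through $b_n$ (not $\operatorname{Tr}$ directly) and invoke Assumption \ref{hyp_graph_1} at the right moment; and in the $\mathcal{Q}_n$ case one must correctly split into the polynomial sub-case (where Lemma \ref{l:correction} gives exact cancellation) versus the general sub-case (where Lemma \ref{lem_unbiased} and Assumption \ref{hyp_unbiased} supply the $o(1)$ bound). A secondary subtlety is justifying the moment-problem step — that the limiting spectral measures are determined by their moments — which works here precisely because $f_\theta \in \mathcal{F}_\rho$ forces all spectra into the compact interval $[e^{-\rho}, e^{\rho}]$, so compactly supported measures with equal moments coincide. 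Once these points are handled, the three cases of $\Lambda$ all reduce to the same two-part scheme.
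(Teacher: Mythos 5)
Your proposal is correct, and it reaches the right conclusion for all three choices of $\Lambda$, but it takes a genuinely different route from the paper's. The paper does not split the quadratic form into mean plus fluctuation: it computes the exact normalized log-Laplace transform $\phi_n(\lambda)=-\frac{1}{2m_n}\log\det\big(I-2\lambda\,\mathcal{K}_n(f_{\theta_0})^{1/2}\mathcal{K}_n(\frac{1}{f_\theta})\mathcal{K}_n(f_{\theta_0})^{1/2}\big)$, identifies its limit $\phi(\lambda)=-\frac{1}{2}\int\log(1-2\lambda\frac{f_{\theta_0}}{f_\theta})\,\mathrm{d}\mu$ via Corollary \ref{c:det}, and then runs a Chernoff/large-deviation argument: strict convexity of $\phi$ makes the convex conjugate $\phi^*(t)$ strictly positive for every $t\neq\phi'(0)=\int\frac{f_{\theta_0}}{f_\theta}\,\mathrm{d}\mu$, which identifies the limit and supplies the exponential tail for Borel--Cantelli in one stroke; the other two choices of $\Lambda$ are then handled by showing the relevant empirical spectral measures have the same limit (Schatten-norm estimates for $(\mathcal{K}_n(f_\theta))^{-1}$, Lemma \ref{lem_unbiased} for $\mathcal{Q}_n$). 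You instead identify the limit by a direct trace computation (Lemma \ref{lem_hom}, the factor $\delta_n/m_n\to 0$ from Assumption \ref{hyp_graph_1}, and Assumption \ref{hyp_graph_2}) and obtain concentration from a variance bound upgraded by Hanson--Wright; your handling of the three $\Lambda$'s (boundedness of $\mathcal{K}_n(f_\theta)^{-1}$ to transfer from $\mathcal{K}_n(\frac{1}{f_\theta})$, and Lemmas \ref{l:correction}/\ref{lem_unbiased} for $\mathcal{Q}_n$) is consistent with the paper's. What the paper's route buys is that the limit falls out as $\phi'(0)$ with no separate mean computation, the Szeg\H{o} corollary doing all the work; what yours buys is modularity, since the concentration step is an off-the-shelf inequality needing only the first two moments rather than the full log-determinant asymptotics at every $\lambda$. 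Two small points to make explicit in your version: Hanson--Wright must be applied to the conjugated matrix $\mathcal{K}_n(f_{\theta_0})^{1/2}\Lambda\,\mathcal{K}_n(f_{\theta_0})^{1/2}$ (your symmetrisation remark covers this), and the summability $\sum_n e^{-c\,m_n\varepsilon^2}<\infty$ requires $m_n$ to grow at least linearly in $n$, which holds because the subgraphs are strictly nested --- the paper's own Borel--Cantelli step silently uses the same fact.
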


Lemma \ref{concentration} combined with Corollary \ref{c:det} ensures the $\mathbb{P}_{f_{\theta_0}}-\text{a.s.}$ convergence of $\tilde{l}_n(f_{\theta_0})-\tilde{l}_n(f_\theta)$, $ \bar{l}_n(f_{\theta_0})-\bar{l}_n(f_\theta)$ to $\mathbb{IK}(f_{\theta_0},f_\theta)$.
It provides also the $\mathbb{P}_{f_{\theta_0}}-\text{a.s.}$ convergence of $l^{(u)}_n(f_{\theta_0})-l^{(u)}_n(f_\theta)$ to $\mathbb{IK}(f_{\theta_0},f_\theta)$ in the $AR_P$ or $MA_P$ cases (see Section \ref{s:main_thm}).  
To complete the assertion of Lemma \ref{lem_ik}, it only remains to show the uniform convergences on $\Theta$ of the last quantities. This will be done using an equicontinuity argument given by the following Lemma.

% is a consequence of the compactness of $\Theta$ and of the following Lemma.
% Combined with 
% lemma 
% \ref{lem_det}, it leads to the convergence of $l_n(f)-l_n(g)$ to $IK(f,g)$. Now, only the uniform convergence of this expressions remains to be shown. It is sufficient to provide 
% the equicontinuity of the sequence of function. 

\begin{lem}[Equicontinuity lemma]\label{equicont}
For all $n \geq 0$, the sequences of functions 
$$\left(l_n(f_{\theta_0},X_n) - l_n(f_\theta,X_n)\right)_{n \in \mathbb{N}}$$
 is an $\mathbb{P}_{f_{\theta_0}}$-a.s. equicontinuous sequence on $\left( \left\{f_\theta, \theta \in \Theta \right\}, \left\|.\right\|_\infty\right)$.%, for the topology of the uniform convergence.
%$\Theta$, $\mathbb{P}_{f_{\theta_0}}$-a.s..
 This property also holds for
 $\bar{l_n}$,$\tilde{l}_n$.
Furthermore, the sequence $\left(l^{(u)}_n(f_{\theta_0},X_n -l_n^{(u)}(f_\theta,X_n)\right)_{n \in \mathbb{N}}$ is also   $\mathbb{P}_{f_{\theta_0}}$-a.s. equicontinuous, on $\left(\left\{f_\theta, \theta \in \Theta \right\}, \left\|.\right\|_{1,pol}\right)$.
\end{lem}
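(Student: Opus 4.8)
The plan is to show that the map $f \mapsto \big(l_n(f_{\theta_0},X_n) - l_n(f,X_n)\big)$, viewed as a function on $\mathcal{F}_\rho$ equipped with $\|\cdot\|_\infty$ (resp. $\|\cdot\|_{1,pol}$ for $l_n^{(u)}$), is uniformly Lipschitz in $n$, up to an $\mathbb{P}_{f_{\theta_0}}$-a.s.\ bounded random multiplicative constant, which immediately yields equicontinuity of the sequence. Writing $2 m_n\big(l_n(f_{\theta_0}) - l_n(f)\big) = -\big(\log\det \mathcal{K}_n(f_{\theta_0}) - \log\det \mathcal{K}_n(f)\big) - X_n^T\big(\mathcal{K}_n(f_{\theta_0})^{-1} - \mathcal{K}_n(f)^{-1}\big)X_n$, the difference between two values at spectral densities $f$ and $g$ splits into a deterministic ``log-det'' part and a random quadratic part, and I would bound each separately.

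First, for the deterministic part: since $f,g \in \mathcal{F}_\rho$ we have $e^{-\rho} \le f,g \le e^{\rho}$, and $\frac1{m_n}\big(\log\det\mathcal{K}_n(f) - \log\det\mathcal{K}_n(g)\big)$ is handled via Corollary \ref{c:det}, which shows it is close to $\int \log(f/g)\,\mathrm d\mu$; the function $(f,g)\mapsto \int\log(f/g)\,\mathrm d\mu$ is Lipschitz in $\|f-g\|_\infty$ on $\mathcal{F}_\rho$ because $|\log f - \log g| \le e^{\rho}|f-g|$ on the common support, and the pre-limit quantities inherit a uniform-in-$n$ Lipschitz bound from the explicit block-norm estimate in Lemma \ref{lem_hom}. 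Second, for the quadratic part, I would write $\mathcal{K}_n(f)^{-1} - \mathcal{K}_n(g)^{-1} = \mathcal{K}_n(f)^{-1}\big(\mathcal{K}_n(g)-\mathcal{K}_n(f)\big)\mathcal{K}_n(g)^{-1}$ and use that $\mathcal{K}_n(1/f), \mathcal{K}_n(1/g)$ and $\mathcal{K}_n(f)^{-1}, \mathcal{K}_n(g)^{-1}$ all have operator norm bounded by $e^{\rho}$ (uniformly in $n$), while $\|\mathcal{K}_n(f)-\mathcal{K}_n(g)\|$ is controlled by $\alpha(f-g)$ or, after the series is compared, by a multiple of $\|f-g\|_\infty$ on the compact spectrum. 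Thus $\big|X_n^T(\mathcal{K}_n(f)^{-1}-\mathcal{K}_n(g)^{-1})X_n\big| \le C\,\|f-g\|_\infty\,\|X_n\|_2^2$, and dividing by $m_n$ the factor $\frac1{m_n}\|X_n\|_2^2 = \frac1{m_n}\sum_{i\in G_n}X_i^2$ converges $\mathbb{P}_{f_{\theta_0}}$-a.s.\ (by Lemma \ref{concentration} applied with $\Lambda = \mathrm{Id} = \mathcal{K}_n(1)$ viewed as $f_\theta \equiv 1$, or directly by a strong law / Borel--Cantelli argument using $\delta_n = o(m_n)$), hence is a.s.\ bounded. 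This gives the equicontinuity for $l_n$, $\bar l_n$, $\tilde l_n$; for $l_n^{(u)}$ one replaces $\mathcal{K}_n(1/f)$ by $\mathcal{Q}_n(1/f) = B^{(n)}\odot\mathcal{K}_n(1/f)$, using $\sup_{ij}|B^{(n)}_{ij}| \le 1+u_n$ (Assumption \ref{hyp_unbiased}) and the norm $\|\cdot\|_{1,pol}$ in place of $\|\cdot\|_\infty$, since for polynomials of degree $\le P$ the relevant control is on $\sum_k|f_k|$.

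The main obstacle I anticipate is the random quadratic part: one must pass from a bound involving $\alpha(f-g)$ or $\|f-g\|_{1,pol}$ — the natural quantities coming out of the operator estimates — to a bound in $\|f-g\|_\infty$, which is the topology in which equicontinuity is claimed. This requires either restricting attention to the compact family $\{f_\theta\}$ where all these norms are comparable (because $f_\theta$ ranges over $\mathcal{F}_\rho$, whose elements have uniformly summable Taylor coefficients, so $\|f-g\|_\infty$ near the relevant part of the spectrum controls enough coefficients), or working throughout with a polynomial truncation whose error is controlled uniformly by $\rho$. Making this comparison precise and uniform in $n$, together with the a.s.\ boundedness of $\frac1{m_n}\|X_n\|_2^2$, is the technical heart; once it is in place, the Arzelà--Ascoli-type equicontinuity conclusion is routine.
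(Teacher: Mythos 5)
Your overall architecture is the paper's: split $m_n\bigl(l_n(f_{\theta_0})-l_n(f)\bigr)$ into the deterministic $\log\det$ part and the random quadratic part; for the quadratic part factor out $\frac{1}{m_n}\|X_n\|_2^2$ (a.s.\ bounded) via $\frac{1}{m_n}|X_n^TBX_n|\leq \|B\|_{2,op}\,\frac{1}{m_n}X_n^TX_n$ and prove equicontinuity of $\theta\mapsto \mathcal{K}_n(1/f_\theta)$, $\mathcal{K}_n(f_\theta)^{-1}$ in operator norm using the resolvent identity and the uniform bounds $e^{-\rho}\leq f_\theta\leq e^{\rho}$; for $\mathcal{Q}_n$ switch to the $\|\cdot\|_{\infty,op}$ / $\|\cdot\|_{1,pol}$ control with $\sup_{ij}|B^{(n)}_{ij}|\leq 1+u_n$. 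Also, the ``main obstacle'' you anticipate is largely illusory: since $\mathcal{K}_n(h)$ is the compression $P_nh(W)P_n$, the spectral theorem gives $\|\mathcal{K}_n(h)\|_{2,op}\leq\sup_{\operatorname{Sp}(W)}|h|$ directly, so the quadratic part for $\mathcal{K}_n$ is controlled by $\|f-g\|_\infty$ with no detour through $\alpha(f-g)$; the passage to $\|\cdot\|_{1,pol}$ is only needed for $\mathcal{Q}_n$, where the Hadamard product destroys the spectral bound — which is exactly why the lemma states that case in the $\|\cdot\|_{1,pol}$ topology.

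The one genuine gap is your treatment of the $\log\det$ term. Corollary \ref{c:det} is a pointwise-in-$g$ limit statement with no rate and no uniformity in $g$, so it cannot by itself transfer the Lipschitz property of $g\mapsto\int\log g\,\mathrm{d}\mu$ back to the pre-limit quantities; and Lemma \ref{lem_hom} bounds the block norm of $\mathcal{K}_n(g_1)\cdots\mathcal{K}_n(g_k)-\mathcal{K}_n(g_1\cdots g_k)$, which is not a modulus-of-continuity estimate for $g\mapsto\frac{1}{m_n}\log\det\mathcal{K}_n(g)$. The assertion that the pre-limit quantities ``inherit a uniform-in-$n$ Lipschitz bound'' is therefore unsupported as written. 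The correct (and elementary) route, which is what the paper does, is direct: write
$$\log\det\mathcal{K}_n(f)-\log\det\mathcal{K}_n(g)=\log\det\Bigl(I+\mathcal{K}_n(g)^{-1}\bigl(\mathcal{K}_n(f)-\mathcal{K}_n(g)\bigr)\Bigr),$$
note that the eigenvalues $\lambda_i$ of $\mathcal{K}_n(g)^{-1}(\mathcal{K}_n(f)-\mathcal{K}_n(g))$ satisfy $\sup_i|\lambda_i|\leq e^{\rho}\|f-g\|_\infty$, and for $\sup_i|\lambda_i|\leq\frac12$ bound $\frac{1}{m_n}\sum_i|\log(1+\lambda_i)|\leq 2\log(2)\,e^{\rho}\|f-g\|_\infty$, uniformly in $n$. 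With that substitution your argument closes and coincides with the paper's proof.
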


We can now end the proof of Lemma \ref{lem_ik}:

First, notice that the space $\left\{ f_\theta, \theta \in \Theta\right\}$ is compact for the topology of the uniform convergence. This also holds for  $\left(\left\{f_\theta, \theta \in \Theta \right\}, \left\|.\right\|_{1,pol}\right)$.
So, there exists a dense sequence $(f_{\theta_p})_{p \in \mathbb{N}}$. Then, using Lemma \ref{lem_hom} and Corollary \ref{c:det}, the sequence
$\left(l_n(f_{\theta_0},X_n) - l_n(f_{\theta_p},X_n)\right)_{n \in \mathbb{N}}$ converges  $\mathbb{P}_{f_{\theta_0}}$-a.s. to $\mathbb{IK}(f_{\theta_0},f_{\theta_p})$.
 
If a sequence of functions is equicontinuous and converges pointwise on a dense subset of its domain, and if its co-domain is a complete space, then the sequence converges pointwise on all the domain \cite{rudin}.

Using this well known property, we obtain, $\mathbb{P}_{f_{\theta_0}}$-a.s., the pointwise convergence of $$\left(l_n(f_{\theta_0},X_n) - l_n(f_{\theta},X_n)\right)_{n \in \mathbb{N}}$$ to $\mathbb{IK}(f_{\theta_0},f_{\theta})$, for any $\theta \in \Theta$.

Furthermore, if a sequence of functions is equicontinuous and converges pointwise on its domain, then this convergence is uniform on any compact subspace of the domain \cite{rudin}.

Thus, we get, $\mathbb{P}_{f_{\theta_0}}$-a.s., the uniform convergence on $\Theta$ of the sequence $$\left(l_n(f_{\theta_0},X_n) - l_n(f_{\theta},X_n)\right)_{n \in \mathbb{N}}$$ to $\mathbb{IK}(f_{\theta_0},f_{\theta})$.

Using the same kind of arguments, this uniform convergence also holds for $\bar{l_n}$,$\tilde{l}_n$ and $l^{(u)}_n$. This concludes the proof of Lemma \ref{lem_ik}.
%COMMENTAIRE UTILE : Si une suite (fn) de fonctions est équicontinue et converge
% simplement sur un sous-ensemble dense de l'espace de départ, et si l'espace d'arrivée est complet, 
%alors la suite converge simplement sur l'espace de départ tout entier*

% By compacity and equicontinuity of $\left\{g_\theta, \theta \in \Theta \right\}$, this convergence is furthermore uniform on $\Theta$.
% This ends the proof of the main lemma.
%COMMENTAIRE UTILE : Si une suite (fn) de fonctions est équicontinue et converge simplement alors cette convergence est uniforme sur tout compact.
% Plus généralement, si K est un espace compact et si A
% est un ensemble équicontinu de fonctions de K dans F alors sur A, la topologie de la convergence simple et celle de la convergence uniforme coïncident.

\end{proof}

\subsection{Proof of the technical lemmas}
\label{s:technical_lemmas}

% \begin{lem}[Weak convergence of the measure above]
%  For any $\phi$ a continuous bounded function, we get that
% $$\frac{1}{m_n}\left(Tr(\phi(K_n(g))) - K_n(\phi(g)) \right) \rightarrow 0 $$
% \end{lem}

% \begin{proof} of Lemma \ref{lem_conv_measure}
% Since the functions have compact support, it is sufficient to prove the result for $\phi(x) = x^k, k >0$. Using $\alpha(g)\leq \rho < +\infty$ and Lemma \ref{lem_hom}, we get 
% $$\frac{1}{m_n} Tr((K_n(g))^k - K_n(g^k)) \leq \frac{\delta_n}{m_n} k \alpha(g)^k \xrightarrow{k \rightarrow \infty} 0 .$$
% \end{proof}

% \begin{lem}[Concentration lemma]
%  We get, almost surely, the following convergence, for respectively $\Gamma = K_n(\frac{1}{g)}$, $\Gamma = (K_n(g))^{-1}$ and $\Gamma$ 
% the unbiased operator.
% $$X^T\Gamma X \rightarrow \int \frac{f}{g} \mathrm{d}\mu$$
% \end{lem}

\begin{proof} of Lemma \ref{concentration}
% 
% \begin{lem}[Concentration lemma]\label{concentration}
% For respectively $\Lambda = \mathcal{K}_{n}(\frac{1}{f_\theta})$, $\Lambda = (\mathcal{K}_{n}(f_\theta))^{-1}$ or $\Lambda = \mathcal{Q}_n(\frac{1}{f_\theta})$, we have,
% $$\frac{1}{m_n} X_n^T\Lambda X_n \underset{n \rightarrow \infty}{\rightarrow} \int \frac{f_{\theta_0}}{f_\theta} \mathrm{d}\mu, \mathbb{P}_{f_{\theta_0}}-\text{a.s.}.$$
% \end{lem}

Let $\theta \in \Theta$.
First, consider the case $\Lambda_n = \mathcal{K}_n\left(\frac{1}{f_\theta}\right)$.
We aim at proving that  
$$\frac{1}{m_n} X_n^T\Lambda_n X_n \underset{n \rightarrow \infty}{\rightarrow} \int \frac{f_{\theta_0}}{f_\theta} \mathrm{d}\mu, \mathbb{P}_{f_{\theta_0}}-\text{a.s.}.$$

To do that, we make use of classical tools of large deviation (see \cite{dembo}).
We compute the Laplace transform of $X_n^T\Lambda_n X_n$ :
\begin{eqnarray*}
&\mathbb{E}_{\mathbb{P}_{f_{\theta_0}}}& \left[ e^{\lambda X_{n}^T \mathcal{K}_n(\frac{1}{f_\theta}) X_{n}}\right] 
\\ & = & \frac{1}{(\sqrt{2\pi})^{m_n}\sqrt{\det(\mathcal{K}_n({f_{\theta_0}))}}}\int e^{\frac{1}{2} X_{n}^T \left( \left(\mathcal{K}_n(f_{\theta_0})\right)^{-1} - 2\lambda \mathcal{K}_n(\frac{1}{f_\theta}) \right)X_{n}}
\\ & = & \frac{1}{\sqrt{\det(\mathcal{K}_n(f_{\theta_0})})} 
\sqrt{\det \left( \left[ \left(\mathcal{K}_n(f_{\theta_0})\right)^{-1} - 2\lambda \mathcal{K}_n(\frac{1}{f_\theta}) \right]^{-1}\right)}
\\  & = & \frac{1}{\sqrt{\det\left( I_{G_n} -2 \lambda  \mathcal{K}_n(f_{\theta_0})^{\frac{1}{2}}\mathcal{K}_n(\frac{1}{f_\theta})\mathcal{K}_n(f_{\theta_0})^{\frac{1}{2}}    \right)}}.
\end{eqnarray*}
These last equalities hold as soon as $I_{G_n} -2 \lambda  \mathcal{K}_n(f_{\theta_0})^{\frac{1}{2}}\mathcal{K}_n(\frac{1}{f_\theta})\mathcal{K}_n(f_{\theta_0})^{\frac{1}{2}} $ is positive. This is true whenever $\lambda \leq 0$  or small enough.

Now, for $\lambda\leq 0$, define
\begin{equation*}
 \phi_n(\lambda) := \frac{1}{m_n} \log \left( \right. \mathbb{E}_{\mathbb{P}_{f_{\theta_0}} } \left. \left[ e^{\lambda X_{n}^T \mathcal{K}_n(\frac{1}{f_\theta}) X_{n}}\right] \right) ,
\end{equation*}
This function verifies
$$\phi_n(\lambda) =  -\frac{1}{2m_n} \log \det\left( I_{G_n} -2 \lambda \mathcal{K}_n(f_{\theta_0})^{\frac{1}{2}}\mathcal{K}_n(\frac{1}{f_\theta})\mathcal{K}_n(f_{\theta_0})^{\frac{1}{2}}       \right).$$

Define also 

$$ \phi(\lambda) = \lim_n \phi_n(\lambda),$$

We get, using Corollary \ref{c:det},
\begin{equation*}
 \phi(\lambda) =  -\frac{1}{2} \int \log \left( 1 -2 \lambda \frac{f_{\theta_0}}{f_\theta}       \right) .
\end{equation*}
 
We can also compute 
$$\phi''(\lambda) = \int \frac{2(\frac{f_{\theta_0}}{f_\theta})^2}{(1-2\lambda\frac{f_{\theta_0}}{f_\theta})^2} \mathrm{d}\mu>0.$$

As very usual, we define the convex conjugate of $\phi$ by 
$$\phi^*(t):= \sup_{\lambda \in \mathbb{R}^-} \left[ \lambda t -\phi(\lambda)\right] , t \in \mathbb{R} .$$
 
As soon as $\phi$ is strictly convex, $\phi^*(t)>\phi(0)=0$, for any $t \neq \phi'(0) = \int \frac{f}{g} \mathrm{d}\mu$.

We can now write, for $\lambda \leq 0$,
\begin{align*}
\frac{1}{m_n} \log(\mathbb{P}(\frac{1}{m_n}X_n^T \Lambda_n X_n \geq t)) & = \frac{1}{m_n} \log(\mathbb{P}(e^{ \lambda X_n^T \Lambda_n X_n} 
\geq e^{ m_n\lambda t}))
\\ & \leq \frac{1}{m_n}\log \left( e^{-m_n \lambda t}  \right)+  \frac{1}{m_n} \log \left(\mathbb{E}[e^{ \lambda X_n^T \Lambda_n X_n}] \right)
\\ & \leq   -\lambda t + \phi_n(\lambda) .
\end{align*}
%ici 4/12/2010
Then we get, $\forall t > \int \frac{f}{g} \mathrm{d}\mu$,
$$ \limsup_n \left(\frac{1}{m_n} \log(\mathbb{P}(\frac{1}{m_n}X_n^T \Lambda_n X_n \geq t)) \right) \leq - \lambda t + \phi(\lambda)$$

So that, taking the infimum on $\lambda$, we get
$$\limsup_n \left(\frac{1}{m_n} \log(\mathbb{P}(\frac{1}{m_n}X_n^T \Lambda_n X_n \geq t)) \right) \leq  - \phi^*(t) < 0  $$

We can obtain the same bound for $t < \int \frac{f}{g}\mathrm{d}\mu$. By Borel-Cantelli theorem, we get the $\mathbb{P}_{f_{\theta_0}}$-almost sure 
convergence of $\frac{1}{m_n}X_n^T \Lambda_n X_n$ to $\int \frac{f}{g} \mathrm{d}\mu$. To prove the same convergence with $\Lambda_n = (\mathcal{K}_n(f_\theta))^{-1}$, we have to show that the difference between the spectral empirical measure of $ \mathcal{K}_n(f_{\theta_0})^{\frac{1}{2}}\mathcal{K}_n(\frac{1}{f_\theta})\mathcal{K}_n(f_{\theta_0})^{\frac{1}{2}}    $
and $ \mathcal{K}_n(f_{\theta_0})^{\frac{1}{2}}\mathcal{K}_n(f_\theta)^{-1}\mathcal{K}_n(f_{\theta_0})^{\frac{1}{2}}    $ converges weakly to zero. It is sufficient to control the convergence of every moment, because these two last measures both have compact support.

For this, we make use of the Schatten norms.
For any $A,B $ matrices of $M_{m_n}(\mathbb{R})$, we define 
$$\left\| A \right\|_{Sch,p} = \left(\sum s_k(A)^p\right)^{\frac{1}{p}}, $$
where $s_k(A)$ are the singular values of $A$.

Note that
$$\left| Tr(AB) \right| \leq \left\|AB \right\|_{Sch,1} \leq \left\| A \right\|_{Sch,1} \left\| B \right\|_{Sch,\infty}.$$

Recall that since $f_\theta \in \mathcal{F}_\rho$, we have $e^{-\rho} \leq f_\theta \leq e^\rho$.
Hence, for any $p \geq 1$,
\begin{align*}
 \frac{1}{m_n}  \left|\operatorname{Tr}\Bigl{(}\mathcal{K}_n^p{(\frac{1}{f_\theta})}\right. &\mathcal{K}_n^p(f_{\theta_0})  -\left. \mathcal{K}_n^{-p}(f_\theta)\mathcal{K}_n^p(f_{\theta_0}) \Bigl{)} \right|
\\ & \leq \frac{1}{m_n} \left\| \mathcal{K}_n{(f_\theta)}^{-p}\mathcal{K}_n^p(f_{\theta_0}) \right\|_{Sch,\infty} 
\left\| \left(\mathcal{K}_n^p{(\frac{1}{\theta})} \mathcal{K}_n^p(f_\theta)- I_{G_n}\right)\right\|_{Sch,1} 
\\ & \leq  \frac{\delta_n}{m_n} \frac{e^{2\rho p}}{e^{-2 \rho p}}\alpha(f_\theta)^{2p}\alpha(\frac{1}{f_\theta})^{2p} \underset{n \rightarrow \infty}{ \rightarrow} 0.
\end{align*}

To obtain the same bound with $\Lambda_n = \mathcal{Q}_{n}(\frac{1}{f_\theta})$, we have to prove that the difference between the spectral empirical measures of  $ \mathcal{K}_n(f_{\theta_0})^{\frac{1}{2}}\mathcal{K}_n(\frac{1}{f_\theta})\mathcal{K}_n(f_{\theta_0})^{\frac{1}{2}}    $
and $  \mathcal{K}_n(f_{\theta_0})^{\frac{1}{2}}\mathcal{Q}_n(\frac{1}{f_\theta})\mathcal{K}_n(f_{\theta_0})^{\frac{1}{2}}    $ converge weakly to zero. This last assertion is a direct consequence of Lemma \ref{lem_unbiased}. 
So, we get $$\frac{1}{m_n}X_n^T\Lambda_n X_n \rightarrow \int \frac{f_{\theta_0}}{f_\theta}, \mathbb{P}_{f_{\theta_0}}-\text{ a.s.}$$
 \end{proof}

% 
% \begin{lem}[Equicontinuity lemma]
% The functions $l_n(f) - l_n(g)$ are equicontinuous over $g_\theta, \theta \in \Theta$. This property remains if we substitute $l_n$ by $\bar{l_n}$,$\tilde{l}_n$ or $l^{(u)}_n$ 
% \end{lem} 

\begin{proof} of Lemma \ref{equicont}
% \begin{lem}[Equicontinuity lemma]\label{equicont}
% For all $n \geq 0$, the functions $\left(l_n(f_{\theta_0},X_n) - l_n(f_\theta,X_n)\right)_{n \in \mathbb{N}}$ is an $\mathbb{P}_{f_{\theta_0}}$-a.s. equicontinuous sequence on $\left\{f_\theta, \theta \in \Theta \right\}$.
% %$\Theta$, $\mathbb{P}_{f_{\theta_0}}$-a.s..
%  This property also holds for
%  $\bar{l_n}$,$\tilde{l}_n$ and $l^{(u)}_n$ 
% \end{lem}

Recall that we aim at proving that,  $\mathbb{P}_{f_{\theta_0}}$-a.s., the sequence of functions
 $$\left(l_n(f_{\theta_0},X_n) - l_n(f_\theta,X_n)\right)_{n \in \mathbb{N}}$$
 is  equicontinuous on $\left\{f_\theta, \theta \in \Theta \right\}$, and that this property also holds for $\bar{l_n}$,$\tilde{l}_n$ and $l^{(u)}_n$.

First, we will prove the equicontinuity of the sequence
$$\left( \frac{1}{m_n} \log \det (\mathcal{K}_n(f_\theta))\right)_{n \in \mathbb{N}}.$$
Let $\theta, \theta' \in \Theta$.

Denote $\lambda_i$ the eigenvalues of $\mathcal{K}_n(f_{\theta'})^{-1}\left( \mathcal{K}_n(f_{\theta'})- \mathcal{K}_n(f_\theta)\right)$.
Since $f_\theta \in \mathcal{F}_\rho$, we have $e^{-\rho} \leq f_\theta \leq e^\rho$.

Notice that we have
\begin{align*}
\sup_{i = 1, \cdots, n}\left| \lambda_i\right| &=  \left\| \mathcal{K}_n(f_{\theta'})^{-1}\left( \mathcal{K}_n(f_{\theta'})- \mathcal{K}_n(f_\theta)\right) \right\|_{2,op} 
\\ & \leq e^\rho \left\| f_{\theta'}- f_\theta \right\|_\infty.
\end{align*}

So that, to prove the equicontinuity, we may assume that $\theta$ is close enough to $\theta'$ to ensure that $\sup_{i = 1, \cdots, n}\left| \lambda_i\right| \leq \frac{1}{2}$.

We have
\begin{align*}
\frac{1}{m_n}\Big| \log \det (\mathcal{K}_n(f_{\theta'})) &- \log \det (\mathcal{K}_n(f_\theta)) \Big| 
\\ & =  \frac{1}{m_n} \left| \log \det \left( I_{G_n} - \mathcal{K}_n(f_{\theta_0})^{-1}\left( \mathcal{K}_n(f_{\theta'}) - \mathcal{K}_n(f_\theta)\right) \right) \right|
\\ &\leq \frac{1}{m_n} \sum_{i \in G_n} \left| \log(1+\lambda_i) \right|
\\ & \leq  \frac{1}{m_n} \sup_{i \in G_n} \left| \log(1+\lambda_i) \right|
\\ & \leq  2\log(2) \sup_{i \in G_n} \left| \lambda_i \right| 
%\text{ as soon as } \sup_{i = 1, \cdots, n}\left| \lambda_i\right| \leq \frac{1}{2} 
\\ & \leq  2\log(2) e^\rho \left\| f_{\theta'}- f_\theta \right\|_\infty.
\end{align*}

Furthermore, the sequence $(\int \log(f_\theta) \mathrm{d}\mu)_{n \in \mathbb{N}}$ is also equicontinuous since, using a Taylor formula,
$$\int \left| \log(f_{\theta'}) \mathrm{d}\mu - \int \log(f_\theta) \mathrm{d}\mu \right| \leq e^{\rho} \left\|f_{\theta'}-f_\theta \right\|_\infty.  $$

Now we tackle the equicontinuity of the sequences $$\left(X_n^T \mathcal{K}_n(f_\theta)^{-1}X_n\right)_{n \in \mathbb{N}},$$ $$\left(X_n^T \mathcal{K}_n(\frac{1}{f_\theta})X_n\right)_{n \in \mathbb{N}}$$ and $$\left(X_n^T \mathcal{Q}_n(\frac{1}{f_\theta})X_n\right)_{n \in \mathbb{N}}.$$ 

Notice first that, for any matrix $B \in M_n(\mathbb{R})$,
\begin{equation*}
 \frac{1}{m_n}\left|X_n^T B X_n \right| \leq \frac{1}{m_n} \left\|B\right\|_{2,op} \left|X_n^TX_n\right| .
\end{equation*}

It is thus sufficient to prove the equicontinuity of the sequences $$(\mathcal{K}_n(f_\theta)^{-1})_{n \in \mathbb{N}},$$ $$(\mathcal{K}_n(\frac{1}{f_\theta}))_{n \in \mathbb{N}}$$ and $$(\mathcal{Q}_n(f_\theta)^{-1})_{n \in \mathbb{N}},$$ for the norm $\left\| . \right\|_{2,op}$

Note that
\begin{align*}
\left\|\mathcal{K}_n(\frac{1}{f_{\theta'}}) - \mathcal{K}_n(\frac{1}{f_\theta})\right\|_{2,op} &\leq \left|\frac{1}{f_{\theta'}} -\frac{1}{f_\theta}  \right|_\infty
 \\ &\leq e^{2\rho}\left\| f_{\theta'}-f_\theta \right\|_\infty.
 \end{align*}

Then,
\begin{align*}
\left\|(\mathcal{K}_n(f_{\theta'}))^{-1} - (\mathcal{K}_n(f_\theta))^{-1}\right\|_{2,op} &\leq \left\|(\mathcal{K}_n(f_{\theta'}))^{-1} (\mathcal{K}_n(f_\theta))^{-1}\right\|_{2,op} \left\|(\mathcal{K}_n(f_{\theta'})) - (\mathcal{K}_n(f_\theta))\right\|_{2,op}
\\ & \leq  e^{2\rho} \left\| f_{\theta'}-f_\theta \right\|_\infty.
\end{align*}

Then, recall that, for any symmetric matrix $B \in M_n(\mathbb{R})$, we have $$\left\|B\right\|_{2,op} \leq \left\| B \right\|_{\infty,op} .$$
Recall also that $\mathcal{Q}_n(f_\theta) = B^{(n)}\odot \mathcal{K}_n(f_\theta)$. Denote 
\begin{eqnarray*}
\left\|\mathcal{Q}_n(\frac{1}{f_{\theta'}}) - \mathcal{Q}_n(\frac{1}{f_\theta})\right\|_{2,op}& \leq& \left\|\mathcal{Q}_n(\frac{1}{f_{\theta'}}) - \mathcal{Q}_n(\frac{1}{f_\theta})\right\|_{\infty,op}  
\\ & \leq & \sup_{i,j = 1, \cdots n} \left| B^{(n)}_{ij}\right| \left\|\mathcal{K}_n(\frac{1}{f_{\theta'}}) - \mathcal{K}_n(\frac{1}{f_\theta})\right\|_{\infty,op}  
\\ & \leq & (1+u_n) \left\|\frac{1}{f_{\theta'}}- \frac{1}{f_\theta} \right\|_{1,pol} (\text{see Assumption } \ref{hyp_unbiased}).
\end{eqnarray*}
% Here, we denote $\psi = \frac{1}{f}$ and $\phi = \frac{1}{g}$.

Since the map $f_\theta \mapsto \frac{1}{f_\theta}$ is continuous over $\mathcal{F}_\rho$, which is compact, we get the uniform equicontinuity of the map $f_\theta \mapsto X_n^T \mathcal{Q}_n(\frac{1}{f_\theta})X_n$ (for the norm $\left\|. \right\|_{1,pol}$).

This concludes the proof of Lemma \ref{equicont}
%  We have proved this lemma, and the equicontinuity of the sequence of functions $l_n(f) - l_n(g)$.
\end{proof}

% 
% \begin{lem}
%  $$ \sqrt{m_n}(l_n^{(u)})'(\theta_0) \rightarrow \mathcal{N}(0,\frac{1}{2}\int \frac{(f'_{\theta_0})^2}{f_{\theta_0}^2}\mathrm{d}\mu)$$
% \end{lem}
% 

\begin{proof} of Lemma \ref{norm}

% \begin{lem}\label{norm}
%  $$ \sqrt{m_n}(l_n^{(u)})'(\theta_0) \underset{n \rightarrow \infty}{\rightarrow} \mathcal{N}(0,\frac{1}{2}\int\left( \frac{f'_{\theta_0}}{f_{\theta_0}}\right)^2\mathrm{d}\mu).$$
% \end{lem}
% 
% 
% 
% \begin{lem}\label{var}
%  $$\left((l_n^{(u)})''(\breve{\theta}_n)\right)^{-1} \xrightarrow{n\rightarrow \infty} 2\left(\int\left(\frac{f'_{\theta_0}}{f_{\theta_0}}\right)^2\mathrm{d}\mu \right)^{-1}.$$
% \end{lem}
% 
% 
% \begin{lem}\label{fi_info}
%  The asymptotic Fisher information is :
% $$J(\theta_0) = \frac{1}{2}\int \left(\frac{f'_{\theta_0}}{f_{\theta_0}}\right)^2\mathrm{d}\mu.$$
% \end{lem}
% 

We aim at proving the asymptotic normality of $\sqrt{m_n}(l_n^{(u)})'(\theta_0)$. 

Using the Fourier transform, it is sufficient to prove that
$$\lim_n \mathbb{E}\left[\exp\left( i \sqrt{m_n}t\left( (l_n^{(u)})'(\theta_0)\right)\right) \right] = \exp\left(-\int \frac{1}{4}t^2\frac{(f'_{\theta_0})^2}{f_{\theta_0}^2}(t)\mathrm{d}\mu(t)  \right) $$

Recall that we have
$$(l_n^{(u)})'(\theta) = -\frac{1}{2}\int \frac{f'_\theta}{f_\theta}\mathrm{d}\mu + \frac{1}{2m_n}X_n^T\mathcal{Q}_n(\frac{f'_\theta}{f_\theta^2})X_n. $$

We can compute 
\begin{align*}
\sqrt{m_n}\mathbb{E}&\left[(l_n^{(u)})'(\theta_0)\right]  =   \sqrt{m_n}\left(-\frac{1}{2}\int \frac{f'_{\theta_0}}{f_{\theta_0}}\mathrm{d}\mu +
 \frac{1}{2m_n}\operatorname{Tr}\left( \mathcal{K}_n(f_{\theta_0})\mathcal{Q}_n(\frac{f'_\theta}{f_\theta^2})\right)\right)
\\& =   \sqrt{m_n}\left(-\frac{1}{2}\int \frac{f'_{\theta_0}}{f_{\theta_0}}\mathrm{d}\mu + 
\frac{1}{2m_n}\operatorname{Tr}\left( \mathcal{K}_n\left(f_{\theta_0}\frac{f'_{\theta_0}}{f_{\theta_0}^2}\right)\right) \right)  (\text{see Lemma } \ref{l:correction} )
\\& \leq  C v_n\sqrt{m_n} \underset{n \rightarrow \infty}{\rightarrow} 0 ~(\text{see Assumption } \ref{hyp_norm} ).
\end{align*}

% The equality can be obtained by derivation of the equality, true in both $AR_K$ and $MA_K$ cases,
% $$K_n(f_{\theta_0})Q_n(\frac{1}{f_\theta}) = K_n(\frac{f_{\theta_0}}{f_\theta}).$$

If we define 
$$Z_n= t\frac{1}{2m_n}X^T\mathcal{Q}_n(\frac{f'_\theta}{f_\theta^2})X,$$
and 
$$Z = t\frac{1}{2}\int \frac{f'_\theta}{f_\theta} \mathrm{d}\mu, $$
the last equality means that
$$\sqrt{m_n}\left(\mathbb{E}\left[Z_n\right] - Z \right) \rightarrow 0 .$$

This holds only if $f_{\theta_0}$ is a polynomial, or if all the $f_\theta, \theta \in \Theta$ are polynomials. This brings out that the 
second theorem holds for the $AR_P$ or $MA_P$ case. It also explains the term 'unbiased estimator' used for $\theta^{(u)}$.
%  the estimator of the approximated likelihood.

% then
% $$\sqrt{m_n}\left(\mathbb{E}\left[Z_n\right] - Z \right) \rightarrow 0 .$$

Then, it is sufficient to show
$$\lim_n \mathbb{E}\left[\exp\left( i \sqrt{m_n}\left(Z_n- \mathbb{E}\left[Z_n\right]    \right) \right)\right] = \exp\left(-\int \frac{1}{4}t^2\frac{(f'_{\theta_0})^2(t)}{f_{\theta_0}^2(t)}\mathrm{d}\mu(t)  \right).$$
If $\tau_k$ denotes the eigenvalues of the symmetric matrix 
$$M_n :=\frac{t}{2}\mathcal{K}_n(f_{\theta_0})^{\frac{1}{2}}\mathcal{Q}_n(\frac{f'_{\theta_0}}{f_{\theta_0}^2})\mathcal{K}_n(f_{\theta_0})^{\frac{1}{2}},$$
 then we can write
$$ Z_n = \frac{1}{m_n}\sum_{k = 1}^{m_n} \tau_k Y_k^2.$$
where $(Y_k)_{k \in G_n}$ has the standard Gaussian distribution on $\mathbb{R}^{m_n}$.

The independence of $Y_k$ leads to
$$ \log \left(  \mathbb{E}\left[\exp\left( i \sqrt{m_n}\left(Z_n- \mathbb{E}\left[Z_n\right]    \right) \right)\right]  \right) = - \sum_{k = 1}^{m_n} \left(i\frac{\tau_k}{\sqrt{m_n}}+
\frac{1}{2}\log(1-2i\frac{\tau_k}{\sqrt{m_n}})\right).$$

The $\tau_k$ are bounded, thanks to the following inequality:
\begin{eqnarray*}
\left\| M_n \right\|_{2,op}&=&\left\| \frac{t}{2}\mathcal{K}_n(f_{\theta_0})^{\frac{1}{2}}\mathcal{Q}_n(\frac{f'_{\theta_0}}{f_{\theta_0}^2})\mathcal{K}_n(f_{\theta_0})^{\frac{1}{2}} \right\|_{2,op}
\\ & \leq & \left\| \frac{t}{2}\mathcal{K}_n(f_{\theta_0})^{\frac{1}{2}}\right\|_{2,op} \left\|\mathcal{Q}_n(\frac{f'_{\theta_0}}{f_{\theta_0}^2})\right\|_{2,op} \left\|\mathcal{K}_n(f_{\theta_0})^{\frac{1}{2}} \right\|_{2,op}
\\ & \leq & \left\| \frac{t}{2}\mathcal{K}_n(f_{\theta_0})^{\frac{1}{2}}\right\|_{2,op} \left\|\mathcal{Q}_n(\frac{f'_{\theta_0}}{f_{\theta_0}^2})\right\|_{1,op} 
\left\|\mathcal{K}_n(f_{\theta_0})^{\frac{1}{2}} \right\|_{2,op}
\\ & \leq & e^\rho \alpha(f'_{\theta_0})\alpha(f_{\theta_0})^2(1 + u_n).
\end{eqnarray*}

The Taylor expansion of $\log(1-2\frac{\tau_k}{\sqrt{m_n}})$ gives
$$ \log \left(  \mathbb{E}\left[\exp\left( i \sqrt{m_n}\left(Z_n- \mathbb{E}\left[Z_n\right]    \right) \right)\right]  \right) = - \frac{1}{m_n}\sum_{k = 1}^{m_n} \tau_k^2 + R_n.$$
With $\left|R_n\right| \leq C \frac{1}{m_n\sqrt{m_n}} \sum_{k = 1}^{m_n} \left|\tau_k\right|^3$

Since the $\tau_k$ are bounded 
the assertion will be proved if we show that 
$$\frac{1}{m_n} \operatorname{Tr}(M_n^2)= \frac{1}{m_n}\sum_{k = 1}^{m_n} \tau_k^2 \xrightarrow{n\rightarrow \infty}  \int \frac{1}{4}t^2\frac{(f'_{\theta_0})^2(t)}{f_{\theta_O}^2(t)}\mathrm{d}\mu(t).$$
This last convergence is a consequence of Lemmas \ref{lem_hom} and \ref{lem_unbiased}.

This provides the asymptotic normality of
$\sqrt{m_n}(l_n^{(u)})'(\theta_0)$ and concludes the proof of Lemma \ref{norm}:
$$ \sqrt{m_n}(l_n^{(u)})'(\theta_0) \underset{n \rightarrow \infty}{\rightarrow} \mathcal{N}(0,\frac{1}{2}\int\left( \frac{f'_{\theta_0}}{f_{\theta_0}}\right)^2\mathrm{d}\mu).$$
\end{proof}

% \begin{lem}
%  $$\left((l_n^{(u)})''(\breve{\theta}_n)\right)^{-1} \rightarrow \frac{1}{2}\left(\int\frac{(f'_{\theta_0})^2}{f_{\theta_0}^2}\mathrm{d}\mu \right)^{-1}$$
% \end{lem}

\begin{proof} of Lemma \ref{var}

We aim now at proving the $P_{f_{\theta_0}}$-a.s. following convergence:
 $$\left((l_n^{(u)})''(\breve{\theta}_n)\right)^{-1} \underset{n \rightarrow \infty}{\rightarrow} \frac{1}{2}\left(\int\frac{(f'_{\theta_0})^2}{f_{\theta_0}^2}\mathrm{d}\mu \right)^{-1}$$

We have 
$$ (l_n^{(u)})''(\theta) = -\frac{1}{2m_n}\left(\int \frac{f''_\theta f_\theta- (f'_\theta)^2}{f_\theta^2}\mathrm{d}\mu +X_n^T\mathcal{Q}_n\left(\frac{2(f'_\theta)^2-f''_\theta f_\theta}{f^3_\theta} \right)X_n\right),$$ 

which leads to 
\begin{equation*}
(l_n^{(u)})''(\theta) \underset{n \rightarrow \infty}{\rightarrow} \frac{1}{2}\int \left(  \frac{f''_\theta f_\theta- (f'_\theta)^2}{f_\theta^2} +   
\frac{f_{\theta_0}\big(  2(f'_\theta)^2-f''_\theta f_\theta  \big)}{f^3_\theta}                 \right)\mathrm{d}\mu, P_{f_{\theta_0}}\text{-a.s.} 
\end{equation*}
Since the sequence $l_n^{(u)}$ is equicontinuous and $\breve{\theta}_n \underset{n \rightarrow \infty}{\rightarrow} \theta_0$, we obtain the desired convergence :
\begin{eqnarray*}
(l_n^{(u)})''(\breve{\theta}_n) \underset{n \rightarrow \infty}{\rightarrow} \frac{1}{2}\int \left(  \frac{(f'_{\theta_0})^2}{f_{\theta_0}^2}    \right)\mathrm{d}\mu, P_{f_{\theta_0}}\text{-a.s.} 
 \end{eqnarray*}

\end{proof}

% \begin{lem}
%  The asymptotic Fisher information may be written as :
% $$J(\theta_0) = \frac{1}{2}\int \frac{(f'_{\theta_0})^2}{f_{\theta_0}^2}\mathrm{d}\mu$$
% \end{lem}

\begin{proof} of Lemma \ref{fi_info}

We want to compute the asymptotic Fisher information.
As usual, it is sufficient to compute 
$$\frac{1}{m_n}\operatorname{Var}\left(L'_n(\theta_0) \right) = \lim_n \frac{1}{2m_n}\operatorname{Tr}(M_n(\theta_0)^2),$$
where $M_n(\theta) = \mathcal{K}_n(f_\theta)^{-1}\mathcal{K}_n(f'_{\theta})\mathcal{K}_n(f_\theta)^{-1}\mathcal{K}_n(f_{\theta_0})$.

This leads, together with Lemma \ref{lem_hom}, and Assumption \ref{hyp_graph_2} to
$$\frac{1}{m_n}\operatorname{Var}\left(L'_n(\theta_0) \right) \rightarrow \frac{1}{2}\int \frac{(f'_{\theta_0})^2}{f_{\theta_0}^2}\mathrm{d}\mu.$$
This ends the proof of the last lemma.
\end{proof}

% \newpage
% \input{Proof_pattern_graph_2010_10_18.tex}
% \bibliographystyle{plain}

% \bibliography{biblio110529.bib}

\def\cprime{$'$}

\end{document}